\newtheorem{theorem}{Theorem}[section]
\newtheorem{proposition}{Proposition}[section]
\newtheorem{lemma}{Lemma}[section]
\newtheorem{definition}{Definition}[section]
\newtheorem{remark}{Remark}[section]
\title[Attaching handles to Delaunay nodo\"{\i}ds]{Attaching handles to Delaunay nodo\"{\i}ds}
\author{Frank Pacard}
\address{\'Ecole Polytechnique, Centre de Math\'ematiques Laurent Schwartz, UMR-CNRS 7640, and Institut Universitaire
Palaiseau, 91128 France}
\email{frank.pacard@math.polytechnique.fr}
\author{Harold Rosenberg}
\address{Instituto de Matematica Pura y Aplicada, 110 Estrada Dona Castorina, Rio de Janeiro 22460-320, Brazil}
\email{hrosen@free.fr}
\thanks{Acknowledgments : The second author is partially supported by the
ANR-08-BLANC-0335-01 grant.}
\begin{document}

\maketitle

\begin{abstract}
For all $m \in \mathbb N - \{0\}$, we prove the existence of a one dimensional
family of genus $m$, constant mean curvature (equal to $1$) surfaces which are
complete, immersed in $\mathbb R^3$ and have two Delaunay ends asymptotic to
nodo\"{\i}dal ends. Moreover, these surfaces are invariant under the group of
isometries of $\mathbb R^3$ leaving a horizontal regular polygon with $m+1$
sides fixed.
\end{abstract}

\section{Introduction}

Delaunay surfaces are complete, non compact constant mean curvature surfaces
of revolution in $\mathbb R^3$ which are either embedded or immersed. The
embedded Delaunay surfaces are usually referred to as {\em undulo\"{\i}ds}. The
elements of this family are generated by {\it roulettes} of ellipses \cite{Eel} and they
interpolate between a right cylinder $S^1 ( \frac 1 2) \times \mathbb R \subset
\mathbb R^3$ and a singular surface which is constituted by infinitely many tangent
spheres of radius $1$ which are periodically arranged along the vertical axis.
Close to the singular limit, the Delaunay undulo\"{\i}ds can be understood as
infinitely many spheres of radius $1$ which are disjoint, arranged periodically
along the vertical axis ; each sphere being connected to its two nearest neighbors
by cateno\"{\i}ds whose rotational axis is the vertical axis, which have been scaled by a small
factor $\tau >0$.

The immersed Delaunay surfaces are referred to as {\em nodo\"{\i}ds}. The
element of this family are generated by {\it roulettes} of hyperbola \cite{Eel}. Again,
part of this family converges to infinitely many spheres of radius $1$ which are
periodically arranged along the vertical axis. In contrast to undulo\"{\i}ds, close to
the singular limit, the Delaunay nodo\"{\i}ds can be understood as infinitely many
spheres of radius $1$ which are either disjoint or slightly overlapping and which
are arranged periodically along the vertical axis ; each sphere being connected to
its two nearest neighbors (with whom it shares a slight overlap) by cateno\"{\i}ds
whose axis is the vertical, which have been scaled by a small factor $\tau >0$.

In this paper, we prove the existence of constant mean curvature surfaces which
have two Delaunay ends (of nodo\"{i}d type) and finite genus.

\begin{theorem}
For all $m \geq 1$, there exists a one parameter family of genus $m$ constant
mean curvature (with mean curvature equal to $1$) surfaces which are invariant
under the action of the full dihedral group ${\rm Dih}_{m+1}^{(3)}$ (the group of
isometries of $\mathbb R^3$ leaving a horizontal regular polygon with $m+1$
sides fixed) and which have two Delaunay ends asymptotic to nodo\"{\i}dal ends.
\label{th:main}
\end{theorem}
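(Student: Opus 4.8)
The plan is to build the desired surfaces by a gluing (connected sum) construction, taking as building blocks a suitably chosen compact piece of a sphere-like surface carrying $m+1$ necks arranged symmetrically, together with two truncated Delaunay nodoïds, and then to solve the constant mean curvature equation on the glued object by a fixed-point argument. First I would set up the approximate solution: start from the unit sphere $S^2$ and remove $m+1$ small disks centered at the vertices of a regular horizontal $(m+1)$-gon inscribed in the equator (plus possibly the two poles, depending on how the Delaunay ends are attached); along the two polar openings one glues in Delaunay nodoïds of small neck parameter $\tau$, and along the $m+1$ equatorial openings one glues in small catenoidal necks, which after opening up contribute the genus $m$. The dihedral symmetry ${\rm Dih}_{m+1}^{(3)}$ is imposed from the start, so all $m+1$ necks have the same size and the configuration is rigid up to the one remaining parameter, which will be $\tau$ (equivalently the Delaunay neck size), giving the claimed one-parameter family.

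The analytic heart is the linear theory. On each building block one must understand the Jacobi operator $L = \Delta + |A|^2$ (here, since $H=1$, $\Delta + |A|^2$) modulo its bounded kernel: for the sphere minus points the kernel consists of the restrictions of the coordinate functions and must be handled by working in weighted spaces that grow slowly near the punctures, and by using the symmetry to kill as much of the kernel as possible; for the Delaunay nodoïd, the relevant indicial roots and the bounded/linearly-growing Jacobi fields (the "Delaunay parameters": translation along the axis, change of neck size, and the geometric parameter governing the asymptotics) must be identified, exactly as in the Kapouleas/Mazzeo–Pacard theory of gluing CMC surfaces. I would introduce weighted Hölder (or Sobolev) norms on the glued surface $\Sigma_\tau$, prove a uniform (in $\tau$) right inverse $G_\tau$ for $L_\tau$ on the subspace of functions with the prescribed symmetry and prescribed decay, with norm bounded independently of $\tau$ after allowing a finite-dimensional "deficiency space" spanned by the Delaunay Jacobi fields and (if needed) approximate cokernel elements of the sphere.

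With the uniform right inverse in hand, the nonlinear step is standard: write a normal graph $u$ over $\Sigma_\tau$, expand $H(u) - 1 = L_\tau u + Q_\tau(u) + E_\tau$, where $E_\tau$ is the error of the approximate solution (which one shows is $O(\tau^{1+\alpha})$ or similar in the weighted norm, because the mismatch is concentrated in the gluing regions) and $Q_\tau$ is quadratically small in $u$; then $u = -G_\tau(E_\tau + Q_\tau(u))$ is a contraction on a small ball, producing an exact CMC surface. One then checks that the free parameters in the construction — the precise sizes and "phases" of the necks, and the translation parameters of the two Delaunay ends — can be adjusted (a finite-dimensional implicit-function-theorem argument balancing the deficiency terms, using the force/flux balancing of CMC surfaces and the dihedral symmetry to reduce the number of balancing conditions) so that the finite-dimensional obstruction vanishes; this determines everything in terms of the single remaining parameter $\tau$, and letting $\tau$ range over a small interval gives the one-parameter family, with the ends genuinely asymptotic to nodoïds and the genus equal to $m$.

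I expect the main obstacle to be the balancing / cokernel analysis rather than the routine contraction estimate: one must verify that after imposing the ${\rm Dih}_{m+1}^{(3)}$ symmetry the approximate cokernel of $L_\tau$ is exactly accounted for by the admissible Delaunay Jacobi fields and that the resulting balancing map is invertible — in particular that the necks do not want to slide off or collapse, and that the two nodoïdal ends can be simultaneously balanced against the $m+1$ equatorial necks. A secondary delicate point is checking that the asymptotic Delaunay parameter forced by balancing lies in the nodoïd (rather than unduloïd) range, which is where the specific geometry near the singular "chain of unit spheres" limit, as described in the introduction, must be used.
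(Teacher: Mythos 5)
Your overall strategy (build one global approximate solution, establish a uniform right inverse for the global Jacobi operator modulo a deficiency space, run a contraction, and kill the finite-dimensional obstruction by balancing) is a legitimate route, and it differs from the paper, which instead constructs on each summand separately an \emph{infinite-dimensional} family of CMC surfaces with boundary, parameterized by boundary data (a half nodo\"{\i}d, $m+1$ rescaled catenoidal necks, a sphere with $m+2$ discs removed), and then matches Cauchy data across the interfaces, invoking elliptic regularity to get smoothness. The problem with your proposal is not the method but the approximate solution itself: the configuration you describe is wrong, and with it the construction cannot get started. The surfaces of the theorem are modeled on the singular limit of the nodo\"{\i}d, a chain of unit spheres arranged along the vertical axis; the genus is created by replacing the \emph{single} catenoidal neck joining two consecutive spheres by $m+1$ small necks whose vertical axes pass through the vertices of a horizontal polygon of radius $\rho$, and this polygon sits near the point where the two spheres almost touch (i.e.\ near the poles), with $\rho^2 \sim \frac{m}{m+1}\,\frac{\tau}{2} \to 0$. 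Placing the $m+1$ necks ``at the equator'' of a single sphere, with the two Delaunay ends already attached at the poles, is not a viable configuration: at an equatorial point of the round sphere there is no second nearby sheet to which a small catenoidal neck could be attached, the handles in your picture have nothing to connect to, and no genus-$m$ approximate CMC surface (let alone one satisfying the force/flux balancing) results. The correct picture is two consecutive unit spheres (in the paper, a sphere and its mirror image through the horizontal plane) joined by $m+1$ necks near their near-tangency point, each sphere carrying one nodo\"{\i}dal end.

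Even granting the correct placement, your proposal omits the quantitative balancing that is the heart of the construction. The necks must be scaled by $\tilde\tau \approx \frac{\tau}{m+1}$, so that the vertical flux carried by the $m+1$ necks matches the flux $\sim \tau$ of the nodo\"{\i}dal ends (this is encoded in the paper by the weights $\tfrac{1}{m+1}$ put on the Dirac masses in the Green's function $\Gamma_\rho$), and the radius $\rho$ must be adjusted so that the linear ``tilt'' term in the expansion of the spherical summand at each neck location,
\[
\left( \rho - \frac{m}{m+1}\,\frac{\tau}{2\rho} \right) z_j \cdot y ,
\]
arising from the mutual interaction of the $m+1$ logarithmic singularities, vanishes to leading order; this forces $\rho^2 \approx \frac{m}{m+1}\,\frac{\tau}{2}$ and leaves the translation-type mismatch to be absorbed by the mode $\lambda_1 \, z_0\cdot x$ of the boundary data. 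Moreover, because the gluing locus shrinks like $\sqrt\tau$ while the necks have scale $\tau$, all linear estimates (on the punctured sphere and on the Delaunay and catenoid pieces) must be proved uniformly in this degenerating geometry — this is precisely the extra difficulty the paper flags relative to standard Delaunay-end gluings. Your step ``check that the balancing map is invertible'' cannot even be formulated until the configuration, the flux splitting $\tilde\tau \approx \tau/(m+1)$, and the relation determining $\rho$ are identified, so as written the proof has a genuine gap at its foundational step.
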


Let us briefly describe how these surfaces are constructed since this will be the
opportunity to give a precise picture of the surfaces themselves. As already
mentioned, close to the singular limit, the Delaunay nodo\"{\i}ds can be understood
as infinitely many spheres of radius $1$ which are either disjoint or slightly
overlapping, arranged periodically along the vertical axis and which are
connected together by cateno\"{\i}ds with vertical axis, which are scaled by a small
factor $\tau >0$, these latter are called {\em cateno\"{\i}dal necks}. The spheres of
radius $1$ arranged along the vertical axis can be ordered (by the height of their
center) and can be indexed by $j \in \mathbb Z$ (without loss of generality, we can
assume that the center of the sphere of index $j$ is at height $2j+1$). In this
description, one can check that the distance between the centers of two
consecutive spheres can be expanded as
\[
d_\tau = 2 + 2 \, \tau \, \log \tau + \mathcal O (\tau) ,
\]
as $\tau$ tends to $0$. In order to obtain the surfaces of Theorem~\ref{th:main},
instead of connecting the sphere indexed by $0$ and the sphere indexed by $1$
using {\em one} cateno\"{\i}dal neck, we connect these two spheres using $m
+1$ cateno\"{\i}ds which are scaled by a factor
\[
\tilde \tau = \frac{\tau}{m+1} + \mathcal O (\tau^{3/2}),
\]
and whose axis are vertical and pass through the vertices of a horizontal regular
polygon (with $m+1$ sides) of size $\rho >0$. We will show that this construction is
successful provided the parameter $\rho$ which measures the size of the polygon,
is carefully chosen (as a function of $\tau$) and, in fact, we will find that
\[
\rho^2 = {\frac{m}{m+1} \, \frac{\tau}{2}} + \mathcal O (\tau^{5/4}).
\]
Notice that all the surfaces we construct have the same small vertical flux (we refer to
\S 4 for a definition of the flux of a Delaunay surface).

Our construction is quite flexible and provides many other interesting constant
mean curvature surfaces. For example, using similar ideas and proofs, one can
also construct singly periodic constant mean curvature surfaces with (infinite)
topology~: starting with the spheres of radius $1$ which are periodically arranged
along the vertical axis and which are either disjoint or slightly overlapping, we can
choose to connect {\em any} two consecutive spheres using $m+1$ cateno\"{\i}ds
scaled by a factor $\tilde \tau$ whose axis are vertical and pass through the
vertices of a horizontal regular polygon (with $m+1$ sides) of size $\rho >0$. More
generally, there is strong evidence that the following is true :

It should be possible to construct constant mean curvature surfaces starting from a
subset $\mathfrak Z \subset \mathbb Z$ and assuming that, for all $j \in \mathbb Z -
\mathfrak Z$, we decide to connect the sphere of index $j$ to the sphere of index
$j+1$ using {\em one} cateno\"{\i}d whose axis is the vertical axis and which is
scaled by a factor $\tau$, while, when $j \in \mathfrak Z$, we decide to connect the
sphere of index $j$ to the sphere of index $j+1$ using $m+1$ cateno\"{\i}ds
whose axis are vertical and pass through the vertices of a small horizontal regular
polygon (with $m+1$ sides) of size $\rho >0$ with $\rho^2 \sim {\frac{m}{m+1} \,
\frac{\tau}{2}} $, and which are scaled by a factor $\tilde \tau \sim \frac{\tau}{m+1}$.
We believe that this configuration can be perturbed into a genuine constant mean
curvature surface.

To complete this introduction, let us mention that the present construction is very
much inspired by \cite{Hau-Pac} where the construction of minimal surfaces in $
\mathbb R^3$ which have finite genus and two Riemann type ends is performed.
In fact, part of the analysis in the present paper parallels the analysis in
\cite{Hau-Pac}. Nevertheless, in the present situation, some extra technical
difficulties arise in the construction (see \S 6) since the points where the connected
sum is performed are located at the vertices of a polygon whose size tends to $0$
as the parameter $\tau$ tends to $0$.

We end this introduction by giving an overview of the paper. In section 2 we recall
some well known facts about the mean curvature operator of normal graphs with
special emphasize on the differential of the mean curvature operator. Section 3 is
concerned with harmonic extensions on half cylinders for which we prove some
decay properties. The next section is quite long, it starts with a careful description
of the Delaunay nodo\"{\i}ds as the Delaunay parameter $\tau$ tends to $0$ (i.e.
close to the singular limit). Then, we proceed with the analysis of the Jacobi
operator about a Delaunay surface as the Delaunay parameter tends to $0$.
Finally, in section 4.5, we apply the implicit function theorem about a half
nodo\"{\i}d (which is a constant mean curvature surface with one boundary and
one Delaunay end) to prove the existence of an infinite dimensional family of
constant mean curvature surfaces which have one Delaunay end and one
boundary. These surfaces are close to the half nodo\"{\i}d we started with and are
parameterized by their boundary data. In section 6, we perform a similar analysis
starting from the cateno\"{\i}d. As a result, we obtain the existence of an infinite
dimensional family of constant mean curvature surfaces which have two
boundaries, are close to a truncated cateno\"{\i}d and are parameterized by their
boundary data. In section 6, we start with a unit sphere from which we excise one
small disc close to the north pole and $m+1$ small discs arranged symmetrically
at the vertices of a regular polygon near the south pole. We perturb this surface
with $m+2$ boundaries applying the implicit function theorem to obtain an infinite
dimensional family of constant mean curvature surfaces which are parameterized
by their boundary data. In the final section, we explain how all these pieces can be
connected together to produce the surfaces in Theorem~\ref{th:main}. At this stage,
the problem then reduces to be able to chose the boundary data of the different
summands so that their union is a $\mathcal C^1$ surface, since elliptic regularity
theory will imply that what we have built is a smooth constant mean curvature
surface.

The construction heavily relies on the analysis of elliptic operators on non compact
spaces as in \cite{Mel}, \cite{Maz}, \cite{McO}. It is true that similar techniques and
ideas have already been used in many constructions, but the proofs are usually
hard to read for non specialists since they always refer to results which are difficult
to find in the literature in the precise form they are needed. This is the reason why
we have decided to present here complete proofs based on simple well known
tools, hoping that this will help the interested reader to master these technics.

Finally, we mention a problem related to our work. To introduce this problem,
we consider $\Sigma$ to be the union of the upper hemisphere of the sphere of
radius $1$ centered at the points $(0,0,-1)$ and the lower hemisphere of the sphere
of radius $1$ centered at the points $(0,0,1)$. The existence of undulo\"{\i}ds,
nodo\"{i}ds with small Delaunay parameters and the existence of the surfaces
we construct in this paper show that, for all $\epsilon >0$ there exists infinitely
many constant mean curvature ($=1$) surfaces which are included in an $\epsilon$-tubular
neighborhood of the undulo\"{\i}d and are not congruent. Obviously a similar
result holds for the surface $\Sigma$.

Now, if we consider two radius one spheres tangent at a point. Can one find
constant mean curvature ($=1$) surfaces (with no boundary) in any small tubular
neighborhood of this configuration~? In fact, we can not even answer the
(apparently) simpler but striking question. Is there any compact mean curvature ($=1$)
surface (with no boundary) near a radius one sphere? More precisely~: is there
an $\epsilon_0>0$, such that if $\Sigma$ is a mean curvature ($=1$)
surface in the $\epsilon_0$-tubular neighborhood of a radius one sphere,
then $\Sigma$ is congruent to the sphere~? In other words, what is the {\em form}
of a compact constant mean curvature surface~?

\section{Generalities}

\subsection{The mean curvature}

We gather some basic material concerning the mean curvature of a surface in
Euclidean space. All these results are well known but we feel that it makes the
reading of the paper easier if we collected them here. Moreover, this will also be
the opportunity to introduce some of the notations we will use throughout the
paper. We refer to \cite{Col-Min} or \cite{Law} for further details.

Let us assume that $\Sigma$ is a surface which is embedded in $\mathbb R^3$.
We denote by $g$ the metric induced on $\Sigma$ by the Euclidean metric $
\mathring g$ and by $h$ the second fundamental form defined by
\[
h ( t_1, t_2) = - \mathring g \, (\nabla_{t_1} N, t_2) ,
\]
for all $t_1, t_2 \in T \Sigma$. Here $N$ is a unit normal vector field on $\Sigma$.
In this paper, we agree that the mean curvature of a surface is defined to be the
{\em average} of the principal curvatures, or, since we are interested in $2$
dimensional surfaces, the half of the trace of the second fundamental form. Hence,
the mean curvature of $\Sigma$ is given by
\[
H : = \frac{1}{2} \, {\rm tr}^g h ,
\]
and the mean curvature vector is then given by $\vec{H} : = H\, N$.

For computational purposes, we recall that the mean curvature appears in the first
variation of the area functional. More precisely, given $w$, a sufficiently small
smooth function which is defined on $\Sigma$ and has compact support, we
consider the surface $\Sigma_w$ which is the normal graph over $\Sigma$ for the
function $w$. Namely
\[
\Sigma \ni p \longmapsto p + w (p) \, N(p) \in \Sigma_w .
\]
We denote by $A_w$ the area of the surface $\Sigma_w$ (we assume that this
area is finite). Then
\[
D A_{|w=0} (v) = - 2 \, \int_\Sigma H\, v \, {\rm dvol}_g .
\]
In the case where surfaces close to $\Sigma$ are parameterized as graphs over $
\Sigma$ using a vector field $\tilde N$ which is transverse to $\Sigma$ but which is
not necessarily a unit normal vector field, the previous formula has to be modified.
Let us denote by $\tilde \Sigma_w$ the surface which is the graph over $\Sigma$,
using the vector field $\tilde N$, for some sufficiently small smooth function $w$.
Namely
\[
\Sigma \ni p \longmapsto p + w (p) \, \tilde N(p) \in \tilde \Sigma_w .
\]
We denote by $\tilde A_w$ the area of this surface. The previous formula has to be
changed into
\begin{equation}
D \tilde A_{|w=0} (v) = - 2 \, \int_\Sigma (\vec H \cdot \tilde N) \, v \, {\rm dvol}_g .
\label{eq:na}
\end{equation}

In the next result, we give the expression of the mean curvature $H_w$ of the
surface $\Sigma_w$ in terms of $w$. Some notations are needed. For $z \in
\mathbb R$ small enough, we define $g_z$ to be the induced metric on the
parallel surface
\[
\Sigma_z : = \Sigma + z \, N .
\]
It is given explicitly by
\[
g_z = g - 2 \, z \, h + z^2 \, k ,
\]
where the tensor $k$ is defined by
\[
k \, (t_1, t_2) : = g (\nabla_{t_1} \, N, \nabla_{t_2} \, N) .
\]
for all $t_1, t_2 \in T\Sigma$. With these notations, we have the~:
\begin{proposition}
The mean curvature $H_w$ of the surface $\Sigma_w$ is given by the formula
\[
\begin{array}{rlllll}
H_w & = & \displaystyle \Big[ \frac 12 \, \sqrt{1+|\nabla^{g_z} w|^2} \, {\rm tr}^{g_z}
(h - w \, k) + \frac{1}{2} \, {\rm div}_{g_z } \, \Big( \frac {\nabla^{g_z} w}{\sqrt{1 + |
\nabla^{g_z} w|^2}}\Big) \\[5mm]
& - & \displaystyle \frac 12 \, \frac {1}{\sqrt{1+|\nabla^{g_z} w|}} \, \left( h - w \, k
\right) (\nabla^{g_z} w , \nabla^{g_z} w) \, \Big]_{| z=w} .
\end{array}
\]
\label{pr:mcng}
\end{proposition}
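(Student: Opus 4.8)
The plan is to reduce the computation of $H_w$ to the already-understood mean curvature of a normal graph over a parallel surface $\Sigma_z$, and then specialize $z=w$. The geometric picture is that the graph $\Sigma_w = \{ p + w(p)\, N(p) : p \in \Sigma\}$ can be viewed, pointwise, as a graph over the parallel surface $\Sigma_z$ at the varying height $z = w(p)$; since the unit normal of $\Sigma_z$ at $p + z\, N(p)$ is again $N(p)$ (the normal is constant along the normal geodesics), moving off $\Sigma_z$ in its normal direction is exactly moving in the $N(p)$ direction. So the strategy is: (i) write down the mean curvature operator for a normal graph over a fixed surface with a general metric, applied to $\Sigma_z$ with its induced data $(g_z, h_z, k_z)$; (ii) express $h_z$ and $k_z$ in terms of the data $(g,h,k)$ on $\Sigma$; (iii) substitute $z = w$ at the end, taking care that the derivative of $w$ that enters is the intrinsic gradient with respect to $g_z$.

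The key steps, in order, are as follows. First I would recall (or derive) the standard formula for the mean curvature of the graph $\{q + u(q)\,\nu(q)\}$ over an abstract surface $(S,\gamma)$ with second fundamental form $\beta$ and shape-operator-squared tensor $\kappa(t_1,t_2) = \gamma(\nabla_{t_1}\nu,\nabla_{t_2}\nu)$: it has the form
\[
2 H_u = \sqrt{1+|\nabla^{\gamma} u|^2}\;{\rm tr}^{\gamma}(\beta - u\,\kappa) + {\rm div}_{\gamma}\!\Big( \frac{\nabla^{\gamma} u}{\sqrt{1+|\nabla^{\gamma} u|^2}}\Big) - \frac{1}{\sqrt{1+|\nabla^{\gamma} u|^2}}\,(\beta - u\,\kappa)(\nabla^{\gamma} u, \nabla^{\gamma} u),
\]
which is the standard second-order quasilinear expression obtained from the first variation of area (or by direct computation of the induced metric and second fundamental form of the graph). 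Second, I would apply this with $(S,\gamma,\beta,\kappa) = (\Sigma_z, g_z, h_z, k_z)$ and $u$ equal to the function $w$ transported to $\Sigma_z$ along the normal; the parametrization $p \mapsto (p+zN(p)) + w(p)\,N(p)$ of $\Sigma_w$ makes this legitimate because $N$ is the unit normal of $\Sigma_z$. Third, I would insert the identifications $h_z - w\, k_z = h - w\, k$ and, more precisely, check that $\frac12\,{\rm tr}^{g_z}(\beta - u\,\kappa)$ with $\beta = h_z$ reduces to $\frac12\,{\rm tr}^{g_z}(h - w\,k)$ evaluated at $z=w$ — this uses $g_z = g - 2z\,h + z^2 k$, $h_z = h - z\,k$ (differentiating the parallel-surface metric, or the Riccati equation for the shape operator), and $k_z = k$, since $\nabla N$ does not change along the normal lines. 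Finally, I would set $z = w$ in every term, which is the content of the bracket $[\,\cdot\,]_{|z=w}$.

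The main obstacle is the bookkeeping in step three: one must be careful that the gradient and divergence appearing in the formula are taken with respect to $g_z$ at fixed $z$, and only afterwards is $z$ set equal to the function $w$; conflating these would introduce spurious terms involving $\nabla z$. In particular, verifying $h_z = h - z\,k$ and $k_z = k$ cleanly — equivalently, that the shape operator of $\Sigma_z$ is $(\mathrm{Id} - z\,S)^{-1}S$ composed appropriately, or more simply that $h_z = h - z k$ holds as stated once one uses $g_z = g - 2zh + z^2k$ — requires a short computation with the Weingarten map along the normal exponential flow, and this is the one place where a sign or an order-of-operations slip would propagate. Everything else is the routine expansion of the area functional's Euler–Lagrange expression, which is recorded for completeness rather than for difficulty.
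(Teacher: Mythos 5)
The pivotal step in your outline, the ``standard formula'' of step one, is false as stated, not merely left unproven. Take $\Sigma$ to be the unit sphere with inward unit normal, so that with the paper's conventions $h=k=g$ and $H=1$, and take $u\equiv c$ constant: your expression gives $2H_u={\rm tr}^{g}(h-c\,k)=2(1-c)$, i.e.\ $H_u=1-c$, while the normal graph is the sphere of radius $1-c$, whose mean curvature is $1/(1-c)$. The exact statement requires the trace, the gradient and the divergence to be computed with respect to the parallel metric $g_z$, with $z$ set equal to $w$ pointwise only afterwards --- but that is precisely Proposition~\ref{pr:mcng}. So step one either rests on an incorrect identity or implicitly assumes the proposition itself; the actual content of the proof, namely an honest computation of the mean curvature of the graph, is missing. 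The paper supplies it variationally: from $\tilde g=g_{z=w}+dw\otimes dw$ one gets $A_w=\int_\Sigma\sqrt{1+|\nabla^{g_w}w|^2}\,{\rm dvol}_{g_w}$, this is differentiated in $w$ taking into account the implicit dependence through $g_w$ (using $\partial_z g_z\,_{|z=w}=-2(h-w\,k)$), and the result is compared with the first variation formula (\ref{eq:na}) for graphs along the transverse field $N$, via $N_w\cdot N=(1+|\nabla^{g_w}w|^2)^{-1/2}$ and ${\rm dvol}_{g_w}=(N_w\cdot N)\,{\rm dvol}_{\tilde g}$. Some computation of this kind (or a direct computation of the induced metric and second fundamental form of the graph) cannot be replaced by citing a graph formula over a fixed base metric.

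There are also bookkeeping errors that would remain even if step one were repaired. The map $p\mapsto(p+z\,N(p))+w(p)\,N(p)$ parameterizes $p\mapsto p+(z+w(p))\,N(p)$, not $\Sigma_w$; as a graph over $\Sigma_z$ the correct height function is $u=w-z$, not $w$. Accordingly the identity you invoke, $h_z-w\,k_z=h-w\,k$, is false: since $h_z=h-z\,k$ and $k_z=k$ (these two are correct), one has $h_z-w\,k_z=h-(z+w)\,k$; what is true, and what your argument needs, is $h_z-(w-z)\,k_z=h-w\,k$. With $u=w-z$ the point of $\Sigma_w$ lying over $p$ sits on $\Sigma_{w(p)}$, where $u$ vanishes, and at such points your three-term expression does agree with the exact formula --- but only because, at points where the height function vanishes, the exact formula is Proposition~\ref{pr:mcng} itself applied to the base $\Sigma_z$, which again shows that this route presupposes what it is meant to prove.
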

\begin{proof} The induced metric $\tilde g$ on $\Sigma_w$ is given by
\[
\tilde g = g_{z =w} + d w \otimes dw .
\]
In particular, this implies that
\[
{\rm det} \, \tilde g = \left(1+ |\nabla^{g_w} w|^2 \right) \, {\rm det} \, g_w .
\]
We can now compute the area of $\Sigma_w$
\[
A_w = \int_\Sigma \sqrt{1+|\nabla^{g_w} w|^2} \, {\rm dvol}_{g_w} ,
\]
as well as the differential of this functional with respect to $w$. In doing so, one
should be careful that the function $w$ appears implicitly in the definition of
$g_w$. We find using an integration by parts
\[
\begin{array}{rllllll}
DA_w (v) & = & - \displaystyle \int_\Sigma {\rm div}_{g_w} \left(
\frac {\nabla^{g_w} w}{\sqrt{1 + |\nabla^{g_w} w|^2 }} \right) \, v \, {\rm dvol}_{g_w}
\\[3mm]
& - & \displaystyle \frac{1}{2} \, \int_\Sigma \frac {1}{\sqrt{1 + |\nabla^{g_w} w|^2 }}
\, g'_w (\nabla^{g_w} w, \nabla^{g_w} w) \, \, v \, {\rm dvol}_{g_w} \\[3mm]
& + & \displaystyle \frac{1}{2} \, \int_\Sigma \sqrt{1 + |\nabla^{g_w} w|^2} \, {\rm
tr}^{g_w} \, g'_w \, \, v \, {\rm dvol}_{g_w} .
\end{array}
\]
where $g'_w : = \partial_z g_z \, _{|z=w} = - 2 \, (h - w \, k)$. To proceed, observe
that, if $N_w$ denotes the normal vector field about $\Sigma_w$, we have
\[
N_w= \frac{1}{\sqrt{ 1+ |\nabla^{g_w} w|^2}}æ\, \left( N - \nabla^{g_w} w \right),
\]
and hence we get
\[
{\rm dvol}_{g_w} = ( N_w \cdot N) \, {\rm dvol}_{\tilde g} .
\]
The result then follows at once from (\ref{eq:na}).
\end{proof}

\subsection{Linearized mean curvature operators}

Again the material in the section is well known and we refer to \cite{Col-Min} and
\cite{Law} for a more detailed description. The Jacobi operator appears in the
linearization of the mean curvature operator when nearby surfaces are
parameterized as normal graphs over a given surface. It follows from Proposition~
\ref{pr:mcng} that the differential of $w \longmapsto H_w$ with respect to $w$,
computed at $w =0$, is given by
\[
J : = D H_{w=0} = \frac{1}{2} \, \left( \Delta_{g} + {\rm tr}^{g} k \right) ,
\]
where $\Delta_g $ is the Laplace-Beltrami operator on $\Sigma$ and ${\rm tr}^{g}
k$ is the square of the norm of the shape operator.

Finally, we recall that if $\Xi$ is a Killing vector field (namely $\Xi$ generates a one
parameter family of isometries) then the function $N \cdot \Xi$, which is usually
referred to as a {\em Jacobi field}, satisfies
\[
J \, ( N \cdot \Xi) = 0 .
\]

This is probably a good time to recall some elementary facts concerning linearized
mean curvature operators when different vector fields are used. As above, we
assume that we are given a vector field $\tilde N$ which is transverse to $\Sigma$,
but which is not necessarily a unit normal vector field. Any surface close enough to
$\Sigma$ can be considered either as a {\em normal} graph over $\Sigma$ or as
a graph over $\Sigma$, using the vector field $\tilde N$, hence, we can define two
nonlinear operators
\[
w \longmapsto H_w , \qquad \mbox{\rm and} \qquad w \longmapsto \tilde H_{w} ,
\]
which are (respectively) the mean curvature of the {\em normal} graph of $w$ and
the mean curvature of the graph of $w$ using the vector field $\tilde N$. The
following result gives the relation between the differentials of these two operators
at $w=0$ \cite{Maz-Pac-Pol}.
\begin{proposition}
The following relation holds
\[
D\tilde H_{|w=0} (v) = DH_{|w=0} ( (\tilde N \cdot N) \, v ) + (\nabla H \cdot \tilde N )
\, v ,
\]
where $H$ denotes the mean curvature of $\Sigma$. In the particular case where
$\Sigma$ has constant mean curvature, this formula reduces to
\[
D \tilde H_{|w =0} (v) = DH_{|w=0} ( (\tilde N \cdot N)\, v) .
\]
\label{pr:2.2}
\end{proposition}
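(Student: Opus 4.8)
The plan is to relate the two mean curvature operators by a simple change of the graphing function, exploiting the fact that both the normal graph and the $\tilde N$-graph describe the same family of nearby surfaces. Fix a point $p \in \Sigma$ and, for $w$ small, let $\tilde\Sigma_w$ denote the $\tilde N$-graph of $w$. Near $p$ the surface $\tilde\Sigma_w$ coincides with the normal graph $\Sigma_{\phi(w)}$ of some function $\phi(w)$, where $\phi$ depends smoothly on $w$ (and on $p$) with $\phi(0)=0$; this is just the statement that the two parametrizations of a neighborhood of $\Sigma$ in $\mathbb{R}^3$ are related by a diffeomorphism. Consequently $\tilde H_w = H_{\phi(w)}$ as functions on $\Sigma$ (locally, hence everywhere), and differentiating at $w=0$ gives $D\tilde H_{|w=0}(v) = DH_{|w=0}(D\phi_{|0}(v))$.

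Next I would compute $D\phi_{|0}$. Writing $p + w\,\tilde N(p) = p' + \phi(w)(p')\,N(p')$ with $p' = p'(w) \in \Sigma$, and differentiating in $w$ at $w=0$ (where $p'=p$), the tangential components determine $\dot p'$ while the normal component gives, to first order, $v\,(\tilde N \cdot N) = D\phi_{|0}(v)$ — because the contribution of $\dot p'$ to the normal displacement is second order (it moves along $\Sigma$, which is tangent to itself). Hence $D\phi_{|0}(v) = (\tilde N \cdot N)\, v$. Wait — this already yields the stated formula, but only its second, constant-mean-curvature form; the extra term $(\nabla H \cdot \tilde N)\, v$ in the general case needs more care, since the identification $\tilde H_w = H_{\phi(w)}$ as I stated it is not quite right when $H$ is non-constant.

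The point I would be careful about is exactly this: the surface $\tilde\Sigma_w$ near $p$ is a normal graph over $\Sigma$, but the \emph{value} of the graphing function and the \emph{base point} are entangled, and the mean curvature of a normal graph at a point $q$ depends on the function near $q$, not just at $q$. So the clean statement is: $\tilde H_w(p) = H_{\phi_w}(q_w)$ where $q_w \in \Sigma$ is the foot of the normal through $p + w\tilde N(p)$, and $\phi_w$ is the normal-graph function of $\tilde\Sigma_w$. Differentiating at $w=0$, I get $D\tilde H_{|w=0}(v)(p) = DH_{|w=0}(D\phi_{|0}(v))(p) + (\partial_z H_z)_{|z=0}(p)\cdot \dot q$, where $\dot q$ is the tangential velocity of the foot point and $(\partial_z H_z)_{|z=0} = \nabla_N$-type derivative of $H$ off $\Sigma$; tracking that $\dot q$ is the tangential part of $w\tilde N$ and that $\partial_z H_z$ restricted to $\Sigma$ together with this tangential derivative assembles into the full gradient $\nabla H$ of (the extension of) $H$ paired with $\tilde N$, one recovers the term $(\nabla H \cdot \tilde N)\, v$. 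This bookkeeping — correctly interpreting $\nabla H$ as the gradient of the mean curvature of the family of parallel surfaces, evaluated on $\Sigma$ — is the one genuinely delicate step; everything else is the chain rule. The constant mean curvature reduction is then immediate since $\nabla H = 0$ in that case. I would also remark that this is precisely the computation carried out in \cite{Maz-Pac-Pol}, so one may alternatively just cite it.
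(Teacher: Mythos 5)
Your set-up is sound and is essentially the paper's computation run in the opposite direction: the paper applies the implicit function theorem to $p + t\,N(p) = q + s\,\tilde N(q)$ so as to rewrite the normal graph of $w$ as a $\tilde N$-graph, computes $\partial_t \Psi(\cdot,0) = (\tilde N\cdot N)^{-1}$ and $\partial_t\Phi(\cdot,0) = -(\tilde N\cdot N)^{-1}\,\tilde N^T$, and differentiates the identity $\tilde H_{\Psi(\cdot,w)}(\Phi(p,w(p))) = H_w(p)$. Your relations $D\phi_{|0}(v) = (\tilde N\cdot N)\,v$ and $\dot q = v\,\tilde N^T$ are exactly this change of variables read in the inverse direction, and your identity $\tilde H_w(p) = H_{\phi_w}(q_w)$ is the same identity. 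Up to that point the proposal matches the paper.

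However, the step you yourself single out as the delicate one is carried out incorrectly. From $\tilde H_w(p)=H_{\phi_w}(q_w)$ the chain rule gives, since $\phi_0=0$, $q_0=p$ and $H_{\phi_0}=H$ is the mean curvature of $\Sigma$ itself,
\[
D\tilde H_{|w=0}(v) \;=\; DH_{|w=0}\bigl(D\phi_{|0}(v)\bigr) \;+\; \nabla H\cdot \dot q \;=\; DH_{|w=0}\bigl((\tilde N\cdot N)\,v\bigr) \;+\; \bigl(\nabla H\cdot \tilde N^T\bigr)\,v ,
\]
where $\nabla H$ is the (tangential) gradient of the function $H$ on $\Sigma$, and $\nabla H\cdot\tilde N^T=\nabla H\cdot\tilde N$ because $\nabla H$ is tangent to $\Sigma$. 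No normal derivative of the mean curvature of nearby parallel surfaces enters: the quantity $(\partial_z H_z)_{|z=0}$ you introduce is already encoded in the operator $DH_{|w=0}$, since applied to a constant $c$ one has $DH_{|w=0}(c)=c\,(\partial_z H_z)_{|z=0}$ (it is part of the zeroth order coefficient of the Jacobi operator). Your claim that the base-point term, together with $(\partial_z H_z)_{|z=0}$, ``assembles into the full gradient of the extension of $H$ paired with $\tilde N$'' is therefore not only unnecessary but false: it double counts that normal contribution, would add a spurious term $(\partial_z H_z)_{|z=0}\,(\tilde N\cdot N)\,v$ to the stated formula, and would destroy the constant mean curvature reduction (for the unit sphere $H\equiv 1$, yet $(\partial_z H_z)_{|z=0}\neq 0$, so the gradient of that extension does not vanish even though $\Sigma$ is CMC). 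The fix is one line: the base-point contribution is $\nabla H\cdot\dot q$ with $\dot q=v\,\tilde N^T$ and nothing more; with that correction your argument coincides with the paper's proof (and with the computation in \cite{Maz-Pac-Pol} that the paper cites).
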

\begin{proof}
The implicit function theorem can be applied to the equation
\[
p + t \, N(p) = q + s \, \tilde N (q) ,
\]
to express (at least locally) $q$ and $s$ as functions of $p$ and $t$, namely
\[
q = \Phi (p,t) \, \qquad \mbox{and} \qquad s = \Psi (p,t) ,
\]
with $\Phi (p,0)=p$ and $\Psi (p,0)=0$. It is easy to check that
\[
\partial_t \Phi (\cdot , 0) = - \frac{1}{ \tilde N \cdot N }\, \tilde N^T, \qquad
\mbox{and} \qquad \partial_t \Psi (\cdot ,0) = \frac{1}{ \tilde N \cdot N}æ.
\]
where the superscript $^T$ denotes the projection over $T\Sigma$.

Differentiation of the identity
\[
\tilde H_{\Psi ( \cdot , w )} (\Phi (p, w (p) ) = H_w (p) ,
\]
with respect to $w$, at $w =0$, we find
\[
D \tilde H_{|w =0} ( \partial_t \Psi ( \cdot , 0 ) \, v ) + \nabla \tilde H_{|w = 0} \cdot
\partial_t \Phi \, v = DH_{|w=0} (v) .
\]
The result then follows from the expression of $\partial_t \Phi$ and $\partial_t \Psi$
and the fact that $\tilde H_{|w =0} = H_{|w=0}$. \end{proof}

\section{Harmonic extensions}

For all $x \in \mathbb R^2$ and all $r >0$ we denote by $D(x,r)\subset \mathbb
R^2$ the open disc of radius $r $, centered at $x$ and $\overline D(x,r)\subset
\mathbb R^2$ the closed disc of radius $r $, centered at $x$. In this section, we
study the harmonic extension either in a half cylinder $[0, \infty) \times S^1$, the
punctured unit disc $\overline D^*(0,1)$ in $\mathbb R^2$ or the complement of
the closed unit disc $\mathbb R^2 - D(0,1)$, of a function which is defined on the
unit circle $S^1$. We will use the fact that all these domains are conformal to each
other and that the Laplacian is conformally invariant in dimension $2$.

Let us assume that we are given a function $f\in \mathcal C^{2, \alpha} (S^1)$. We
consider $F$ to be the bounded harmonic extension of $f$ in the half cylinder,
endowed with the cylindrical metric
\[
g_{cyl} = ds^2 + d \theta^2 .
\]
In other words, $F$ is bounded and is a solution of
\[
\Delta_{g_{cyl}} \, F = 0 ,
\]
in $[0, \infty) \times S^1$ with $F= f$ on $\{0\} \times S^1$.

Observe that one can use cylindrical coordinates to parameterize the punctured
unit disc by
\[
\tilde X(s, \theta ) = (e^{-s} \, \cos \theta, e^{-s} \, \sin \theta) ,
\]
in which case the function $\tilde F$ defined by $\tilde F \circ \tilde X : = F $ is the
unique bounded solution of
\[
\Delta \, \tilde F = 0 ,
\]
(where $\Delta$ denotes the Laplacian in $\mathbb R^2)$ in the punctured unit
disc with $\tilde F = f$ on $S^1$. We set
\[
W^{\rm ins}_f : = \tilde F .
\]

Also, one can use cylindrical coordinates to parameterize the complement of the
unit disc in $\mathbb R^2$ by
\[
\hat X(s, \theta ) = (e^{s} \, \cos \theta, e^{s} \, \sin \theta) ,
\]
in which case $\hat F$ defined by $\hat F \circ \hat X = F$ is the unique bounded
solution of
\[
\Delta \, \hat F =0 ,
\]
in the complement of the unit disc with $\hat F = f$ on $S^1$. We set
\[
W^{\rm out}_f : = \hat F .
\]
In particular, all properties of $F$ will transfer easily to $\tilde F$ and $\hat F$.

Given a function $f $ defined on $S^1$, we shall frequently assume that one or both
of the following assumptions is/are fulfilled
\[
(H1) \qquad \qquad \quad \qquad \qquad \qquad \int_{S^1} \, f \, d \theta = 0 ,
\]
and
\[
(H2) \qquad \int_{S^1} \cos \theta \, f \, d \theta = \int_{S^1} \sin \theta \, f \, d \theta
= 0 .
\]

The following result follows essentially from \cite{Fak-Pac} where a similar result
was proven in higher dimensions~:
\begin{lemma}
There exists a constant $C >0$ such that, for all $f\in \mathcal C^{2, \alpha} (S^1)$
satisfying (H1), we have
\[
\| e^{s} \, F \|_{\mathcal C^{2, \alpha} ([0, \infty) \times S^1)} \leq C \, \| f \|_{\mathcal
C^{2, \alpha} (S^1)} ,
\]
and, if $f$ satisfies (H1) and (H2), we have
\[
\| e^{2s} \, F \|_{\mathcal C^{2, \alpha} ([0, \infty) \times S^1)} \leq C \,
\| f \|_{\mathcal C^{2, \alpha} (S^1)} .
\]
\end{lemma}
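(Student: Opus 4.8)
The plan is to reduce everything to the half-cylinder $[0,\infty)\times S^1$ and use separation of variables. Writing $f(\theta)=\sum_{k\in\mathbb Z} f_k\,e^{ik\theta}$, the bounded harmonic extension is $F(s,\theta)=\sum_k f_k\,e^{-|k|s}\,e^{ik\theta}$, since $e^{-|k|s}e^{ik\theta}$ spans the bounded solutions of $\Delta_{g_{cyl}}F=0$ for each Fourier mode. Hypothesis (H1) says $f_0=0$, so every surviving mode has $|k|\ge 1$ and hence $|e^{s}F(s,\theta)|\le \sum_{|k|\ge1}|f_k|\,e^{(1-|k|)s}\le\sum_{|k|\ge1}|f_k|$ for $s\ge0$; adding (H2) kills $k=\pm1$ as well, so every mode has $|k|\ge2$ and one gains a second exponential factor $e^{2s}$. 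This is the $L^2$-level statement; the content of the lemma is upgrading it to a $\mathcal C^{2,\alpha}$ bound, and that is where I would invoke the result of \cite{Fak-Pac}.

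Concretely, I would argue as follows. First I would record the exponential decay of the derivatives of $F$ from the Fourier expansion: for any multi-index, $\partial^\beta F$ is again a sum over the same set of modes with the same exponential factors (the $\theta$-derivatives bring down powers of $k$, the $s$-derivatives powers of $-|k|$, but in both cases each term still carries $e^{-|k|s}$), so one gets crude weighted $\mathcal C^\ell$ bounds directly. To get the sharp constant and the Hölder piece cleanly, the cleaner route is a scaling/translation argument: fix $s_0\ge 1$ and consider $F$ restricted to the unit-width collar $[s_0-\tfrac12,s_0+\tfrac12]\times S^1$. Interior Schauder estimates for the (translation-invariant) operator $\Delta_{g_{cyl}}$ give $\|F\|_{\mathcal C^{2,\alpha}([s_0-1/4,s_0+1/4]\times S^1)}\le C\,\|F\|_{L^\infty([s_0-1/2,s_0+1/2]\times S^1)}$, and the right-hand side is $\le C e^{-(s_0-1/2)}\|f\|_{\mathcal C^{2,\alpha}}$ under (H1) — respectively $\le Ce^{-2(s_0-1/2)}\|f\|_{\mathcal C^{2,\alpha}}$ under (H1)+(H2) — by the $L^\infty$ computation above combined with the maximum principle bound $\|F\|_\infty\le\|f\|_\infty$. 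Multiplying through by $e^{s}$ (resp. $e^{2s}$) and taking the supremum over $s_0$ absorbs the constant shift, and the region $s\in[0,1]$ is handled directly by boundary Schauder estimates up to $\{0\}\times S^1$ together with $\|F\|_\infty\le\|f\|_\infty$ and $F|_{s=0}=f$. Finally I would transfer the conclusion to $\tilde F=W^{\rm ins}_f$ and $\hat F=W^{\rm out}_f$ using the conformal changes of variable $\tilde X$, $\hat X$ already recorded, noting that conformal invariance of the Laplacian in dimension two means these are genuinely the same function read in different coordinates, so the weighted norms correspond (with $e^{s}\sim 1/|x|$ on the punctured disc and $e^{s}\sim|x|$ on the exterior).

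The main obstacle — and really the only nontrivial point — is the passage from the easy $L^2$/$L^\infty$ decay to the $\mathcal C^{2,\alpha}$ decay with a \emph{uniform} constant independent of $s$. The Fourier series does give pointwise decay of all derivatives, but controlling a Hölder seminorm term-by-term from the series is awkward because one must sum $\sum_k |k|^{2+\alpha} e^{-|k|s}$-type quantities and keep the $s$-dependence sharp; the scaling-plus-interior-Schauder argument sidesteps this by trading the series manipulation for a single elliptic estimate on a fixed-size domain, which is exactly the mechanism behind the cited result of \cite{Fak-Pac}. One has to be a little careful that the Schauder constant is uniform in $s_0$, but that is immediate here because the operator $\Delta_{g_{cyl}}=\partial_s^2+\partial_\theta^2$ is translation-invariant in $s$ and the domains are all translates of a fixed one. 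Everything else — the Fourier computation, the maximum principle, the conformal transfer — is routine.
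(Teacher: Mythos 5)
Your proposal is correct and follows essentially the same route as the paper: Fourier decomposition of $f$, the resulting pointwise decay $e^{-s}$ (under (H1)) or $e^{-2s}$ (under (H1)--(H2)) of the explicit harmonic extension, and then Schauder estimates on unit-size translated annuli (with translation invariance giving a uniform constant) to upgrade to weighted $\mathcal C^{2,\alpha}$ bounds. The only cosmetic differences are that the paper bounds the tail via $|f_n|\le\|f\|_{L^\infty}$ plus a geometric series and uses the maximum principle on $[0,1]\times S^1$ to reach $s=0$, whereas you sum $\sum_k|f_k|$ directly and treat the collar $s\in[0,1]$ with boundary Schauder estimates; both are fine.
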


Before we proceed with the proof of this result, let us emphasize that the norms in
${\mathcal C^{2, \alpha} ([0, \infty) \times S^1)}$ are computed with respect to the
cylindrical metric $g_{cyl}$.

\begin{proof}
We consider the Fourier series decomposition of the function $f$
\[
f (\theta) = \sum_{n \in \mathbb Z} f_n \, e^{i n \theta} .
\]
Observe that $f_0 =0$ when (H1) is fulfilled and $f_{\pm 1} =0$ when (H2) is
fulfilled. For the time being, let us assume that both (H1) and (H2) are satisfied.
Then, the (bounded) harmonic extension of $f$ is given explicitly by
\[
F (s, \theta) = \sum_{ | n | \geq 2 } e^{- |n| \, s} \, f_n \, e^{in \theta } .
\]
Since
\[
| f_n|\leq \| f \|_{L^\infty (S^1)}æ,
\]
we get the pointwise estimate
\[
| F(s,\theta) | \leq 2 \, \| f \|_{L^\infty
(S^1)} \, \sum_{n \geq 2} e^{-n s} \leq 2\, \| f \|_{L^\infty (S^1)} \, \frac{e^{-2s}}{1
- e^{-s}} ,
\]
which implies that
\[
\sup_{[1,\infty) \times S^1} e^{2s} \, | F(s,\theta) | \leq C \, \| f \|_{L^\infty (S^1)} .
\]
Increasing the value of $C >0$ if this is necessary, we can use the maximum
principle in the annular region $[0,1] \times S^1$ to get
\[
\sup_{[0,\infty) \times S^1} e^{2s} \, | F(s,\theta) | \leq C \, \| f \|_{L^\infty (S^1)} .
\]
The estimates for the derivatives of $F$ then follow from classical elliptic estimates
since Schauder's estimates can be applied on each annulus $[s, s+1]æ\times S^1$,
for all $s \geq 0$. This already completes the proof of the result when both (H1)
and (H2) are fulfilled. When only (H1) holds, one has to take into account the
function $f_{\pm 1} \, e^{- s} \, e^{ \pm i \theta }$ which accounts for the slower
decay of $F$ as $e^{-s}$.
\end{proof}

\section{The Delaunay nodo\"{\i}ds}

\subsection{Parameterization and notations}

The Delaunay nodo\"{\i}d ${\mathfrak D}_\tau$
is a surface of revolution which can be parameterized by
\begin{equation}
X_\tau (s, \theta) : = \left( \phi_\tau (s) \, \cos \theta , \phi_\tau (s) \, \sin \theta ,
\psi_\tau (s) \right) ,
\label{eq:1.0}
\end{equation}
where $(s, \theta) \in \mathbb R \times S^1$. Here, the functions $\phi_\tau$ and $
\psi_\tau$ depend on the real parameter $\tau >0$ but, unless this is necessary,
we shall not make this apparent in the notation anymore. The function $\phi$ is
chosen to be the unique smooth, periodic, non-constant solution of
\begin{equation}
{\dot \phi}^2 + \left( \phi^2- \tau\right)^2 = \phi^2 ,
\label{eq:1.1}
\end{equation}
which takes its minimal value at $s=0$ (we agree that $\cdot$ denotes
differentiation with respect to the parameter $s$) and the function $\psi$ is
obtained by integration of
\begin{equation}
\dot \psi = \phi^2- \tau ,
\label{eq:1.2}
\end{equation}
with initial condition $\psi (0) =0$. Observe that $\phi$ is a smooth solution of
\begin{equation}
\ddot \phi + 2 \, \phi \, \left( \phi^2- \tau \right) = \phi .
\label{eq:1.00}
\end{equation}
Since $\phi^2 -\tau$ changes sign, the function $\psi$ is not monotone and closer
inspection of the solutions shows that ${\mathfrak D}_\tau$ is actually not
embedded. The Delaunay nodo\"{\i}ds also arise as the surface of revolution
whose generating curve is a {\em roulette} of a hyperbola and we refer to \cite{Eel}
for a description of this construction. The quantity $\frac 1 4 \, \tau$ is sometimes
referred to as the vertical flux of the Delaunay surface $\mathfrak D_\tau$ (see
Definition 3.1 in \cite{Ros}).

We define
\[
\underline \tau : = \frac{\sqrt{1+4\tau} -1}{2} \qquad \mbox{and} \qquad \overline
\tau : = \frac{\sqrt{1+4 \tau} +1}{2} ,
\]
which, thanks to (\ref{eq:1.1}), are respectively the minimum and maximum values
of $\phi$. As already mentioned, the function $\phi$ is periodic. We agree that $s_
\tau$ denotes one {\em half} of the fundamental period of $\phi$. Using
(\ref{eq:1.1}), we can write
\begin{equation}
s_\tau = \int_{\underline \tau}^{\overline \tau} \frac{d\zeta}{\sqrt{ \zeta^2 - (\zeta^2-
\tau)^2}} .
\label{eq:1/2period}
\end{equation}

In the above parameterization, the induced metric on ${\mathfrak D}_\tau$ is given
by
\[
g_\tau : = \phi^2 \, (ds^2 + d\theta^2) ,
\]
and it is easy to check that the second fundamental form on ${\mathfrak D}_\tau$ is
given by
\[
h_\tau : = (\phi^2 + \tau) \, ds^2 + (\phi^2 - \tau ) \, d\theta^2 ,
\]
when the unit normal vector field is chosen to be
\[
N_\tau : = \frac{1}{\phi} \, \left( (\tau - \phi^2) \, \cos \theta , (\tau - \phi^2) \, \sin
\theta , \dot \phi \right) .
\]
Finally, the tensor $k_\tau $ is given by
\[
k_\tau : = \left( \phi +\frac{ \tau}{\phi} \right)^2 \, ds^2 + \left( \phi - \frac{\tau}{\phi}
\right)^2 \, d\theta^2 .
\]
In particular, the formula for the induced metric and the second fundamental form
implies that the mean curvature of this surface is constant equal to
\[
H : = \frac{1}{2} \, {\rm tr}^{g} \, h =1 .
\]
In these coordinates, it follows at once from the expression of $g_\tau$ and $k_\tau
$, that the Jacobi operator about ${\mathfrak D}_\tau$ is given by
\[
J_\tau : = \frac{1}{2 \, \phi^2} \, \left( \partial_s^2 + \partial_\theta^2 + 2 \,
\left( \phi^2 + \frac{\tau^2}{\phi^2}\right) \right).
\]

\subsection{Structure and refined asymptotics}

The structure of the Delaunay surfaces ${\mathfrak D}_\tau$ is well understood
and it is known that, as the parameter $\tau$ tends to $0$, ${\mathfrak D}_\tau$
converges to the union of infinitely many spheres of radius $1$ which are
arranged periodically along the vertical axis. To get a better grasp on the structure
of ${\mathfrak D}_\tau$ as $\tau$ tends to $0$, we have the following results which
were already used in many constructions of constant mean curvature surfaces by
gluing \cite{Maz-Pac-1}, \cite{Maz-Pac-Pol} and \cite{Maz-Pac-Pol-2}. For the sake
of completeness we give here independent proofs of these results.

First, we have the~:
\begin{lemma}
As $\tau$ tends to $0$ the following holds~:

\begin{itemize}
\item[(i)] The sequence of functions $\phi_\tau (\cdot + s_\tau)$ converges
uniformly on compacts of $\mathbb R$ to the function $s \longmapsto (\cosh
s)^{-1}$.\\

\item[(ii)] The sequence of functions $\psi_\tau (\cdot + s_\tau) - \psi_\tau (s_\tau)$
converges uniformly on compacts of $\mathbb R$ to the function $s \longmapsto
\tanh s$.
\end{itemize}
\end{lemma}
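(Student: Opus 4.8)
The plan is to analyze the ODE \eqref{eq:1.1} directly as $\tau \to 0$, using the fact that the minimum value $\underline\tau = \tfrac{\sqrt{1+4\tau}-1}{2} = \tau + \mathcal O(\tau^2)$ tends to $0$ and the maximum value $\overline\tau = \tfrac{\sqrt{1+4\tau}+1}{2}$ tends to $1$. The key observation is that after the shift by $s_\tau$, the function $\phi_\tau(\cdot + s_\tau)$ attains its \emph{maximum} $\overline\tau$ at $s=0$; so I expect the limiting object to be the solution of the limiting ODE that is maximal at $0$. Setting $\tau = 0$ in \eqref{eq:1.1} gives $\dot\phi^2 + \phi^4 = \phi^2$, i.e. $\dot\phi^2 = \phi^2(1-\phi^2)$, and the solution with $\phi(0) = 1$ is exactly $\phi(s) = (\cosh s)^{-1}$ — this is the generating function of the unit sphere, which is the expected singular limit. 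For part (ii), setting $\tau = 0$ in \eqref{eq:1.2} gives $\dot\psi = \phi^2 = (\cosh s)^{-2}$, whose antiderivative vanishing at $0$ is $\tanh s$.

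To make this rigorous I would proceed as follows. First, introduce $\tilde\phi_\tau(s) := \phi_\tau(s + s_\tau)$, which by \eqref{eq:1.00} solves $\ddot{\tilde\phi} + 2\tilde\phi(\tilde\phi^2 - \tau) = \tilde\phi$ with $\tilde\phi_\tau(0) = \overline\tau \to 1$ and $\dot{\tilde\phi}_\tau(0) = 0$ (since $\phi_\tau$ has a critical point — a maximum — at $s = s_\tau$). The coefficients of this second-order ODE depend continuously on $\tau$, and the initial data converge as $\tau \to 0$ to $(1,0)$; by continuous dependence of solutions of ODEs on parameters and initial conditions, $\tilde\phi_\tau$ converges, uniformly on compact subsets of $\mathbb R$, to the solution $\phi_0$ of $\ddot\phi_0 + 2\phi_0^3 = \phi_0$ with $\phi_0(0) = 1$, $\dot\phi_0(0) = 0$. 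One checks directly (or via the first integral $\dot\phi_0^2 + \phi_0^4 = \phi_0^2$, which is preserved and has the value $0$ at $s=0$) that $\phi_0(s) = (\cosh s)^{-1}$. The only subtlety is to confirm that $\phi_0$ is defined on all of $\mathbb R$ and that the convergence does not break down: since $(\cosh s)^{-1}$ is a bounded global solution and the $\tilde\phi_\tau$ stay in the compact range $[\underline\tau, \overline\tau] \subset [0,1]$, there is no escape to infinity in finite time, so the interval of existence is all of $\mathbb R$ and the compact-convergence statement is uniform on every $[-T,T]$.

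For (ii), write $\tilde\psi_\tau(s) := \psi_\tau(s + s_\tau) - \psi_\tau(s_\tau)$, so that $\tilde\psi_\tau(0) = 0$ and, by \eqref{eq:1.2}, $\dot{\tilde\psi}_\tau(s) = \tilde\phi_\tau(s)^2 - \tau$. Integrating from $0$ to $s$ gives $\tilde\psi_\tau(s) = \int_0^s \big(\tilde\phi_\tau(t)^2 - \tau\big)\, dt$. Using (i), $\tilde\phi_\tau^2 - \tau \to (\cosh t)^{-2}$ uniformly on compacts, so the integral converges uniformly on compacts to $\int_0^s (\cosh t)^{-2}\, dt = \tanh s$, which is the claim. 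I expect the main obstacle — really the only point requiring care — to be part (i): verifying that $s=s_\tau$ is genuinely a point where $\phi_\tau$ attains its maximum $\overline\tau$ (not its minimum), so that the shifted initial data converge to $(1,0)$ rather than to the minimum $(0,0)$ (from which the limiting ODE would only yield the trivial solution $\phi_0 \equiv 0$, not the sphere). This follows from the definition of $s_\tau$ as half the fundamental period together with the normalization that $\phi_\tau$ takes its \emph{minimum} at $s=0$: a continuous periodic function attaining its minimum at $0$ attains its maximum at the half-period, by the monotonicity of $\phi_\tau$ between consecutive critical points, which is itself read off from the sign of $\dot\phi_\tau$ via \eqref{eq:1.1}.
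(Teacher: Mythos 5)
Your proof is correct and follows essentially the same route as the paper: shift by $s_\tau$, use that the shifted function attains its maximum $\overline\tau\to 1$ (with vanishing derivative) at $s=0$, pass to the limit in the ODE to identify $\phi_0=(\cosh s)^{-1}$, and then integrate (\ref{eq:1.2}) to get $\tanh s$. The only cosmetic difference is that you invoke continuous dependence for the second-order equation (\ref{eq:1.00}) with converging initial data, whereas the paper passes to the limit in the first integral (\ref{eq:1.1}) and uses evenness; both are equivalent ways of the same argument.
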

\begin{proof}
It is easy to check that $\phi_\tau (\cdot + s_\tau)$ is even and that $\phi_\tau (s_
\tau)= \overline \tau$ converges to $1$ as $\tau$ tends to $0$ (this follows from the
fact that the function $\phi_\tau$ achieves its maximal value when $s =s_\tau$).
Passing to the limit in (\ref{eq:1.1}), we conclude that the sequence of functions $
\phi_\tau$ converges uniformly on compacts of $\mathbb R$ to a function $\phi_0$
which is a solution of
\[
{\ddot \phi}_0 + 2 \, \phi^3_0 = \phi_0 .
\]
Moreover, the function $\phi_0$ is even and is equal to $1$ when $s=0$.
Therefore, necessarily $\phi_0 (s) = (\cosh s )^{-1}$. Next, one can pass to the limit
in (\ref{eq:1.2}) to prove that the sequence $\psi_\tau (\cdot + s_\tau) - \psi_\tau (s_
\tau)$ converges to a function $\psi_0$ which is a solution of
\[
\dot \psi_0 = \phi^2_0 ,
\]
and satisfies $\psi_0(0)=0$. We find that $\psi_0 (s) = \tanh s$ and this completes
the proof of the result.
\end{proof}

Now, we investigate the behavior of ${\mathfrak D}_\tau$ close to the origin in $
\mathbb R^3$. It turns out that the sequence of rescaled surfaces $\frac{1}{\tau} \,
{\mathfrak D}_\tau$ converges on compacts of $\mathbb R^3$ to a cateno\"{\i}d
whose axis is the vertical axis. This is the content of the~:
\begin{lemma}
As $\tau$ tends to $0$, the following holds~:
\begin{itemize}
\item[(i)] The sequence of functions $\frac{1}{\tau} \, \phi_\tau $ converges
uniformly on compacts of $\mathbb R$ to the function $s \longmapsto \cosh s$. \\

\item[(ii)] The sequence of functions $\frac{1}{\tau} \, \psi_\tau $ converges
uniformly on compacts of $\mathbb R$ to the function $s \longmapsto - s$.
\end{itemize}
\label{le:2.2}
\end{lemma}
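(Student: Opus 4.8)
The plan is to rescale the profile functions and pass to the limit in the ordinary differential equations (\ref{eq:1.1}), (\ref{eq:1.00}) and (\ref{eq:1.2}), exactly in the spirit of the previous lemma; the only new ingredient is a uniform (in $\tau$) a priori bound controlling the rescaled profile on compact sets. Set $u_\tau := \tau^{-1} \, \phi_\tau$. Since $\phi_\tau (0) = \underline \tau$ and since $\underline \tau$ satisfies the elementary identity $\underline \tau^2 + \underline \tau = \tau$ (which follows at once from its definition), we have $u_\tau (0) = \underline \tau / \tau = (1+\underline \tau)^{-1}$, which tends to $1$ as $\tau$ tends to $0$; moreover $\dot u_\tau (0) = \tau^{-1} \, \dot \phi_\tau (0) = 0$ because $s=0$ is the minimum of $\phi_\tau$. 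Dividing (\ref{eq:1.1}) by $\tau^2$, (\ref{eq:1.00}) by $\tau$ and (\ref{eq:1.2}) by $\tau$ yields
\[
\dot u_\tau^2 + \left( \tau \, u_\tau^2 - 1 \right)^2 = u_\tau^2 , \qquad
\ddot u_\tau + 2 \, \tau \, u_\tau \, \left( \tau \, u_\tau^2 - 1 \right) = u_\tau , \qquad
\tau^{-1} \, \dot \psi_\tau = \tau \, u_\tau^2 - 1 ,
\]
with $\tau^{-1} \, \psi_\tau (0) = 0$.

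Next I would establish the a priori bound. Since $\phi_\tau$ is positive and periodic with minimum $\underline \tau$ at $s=0$, we have $u_\tau \geq u_\tau (0) > 0$ everywhere, and the first identity above gives $\dot u_\tau^2 \leq u_\tau^2$, hence $|(\log u_\tau)'| \leq 1$. Integrating from $0$ yields $u_\tau (s) \leq u_\tau (0) \, e^{|s|}$, and since $u_\tau (0) \leq 1$ for $\tau$ small we get $u_\tau (s) \leq e^{|s|}$ on all of $\mathbb R$. Consequently, on any compact interval $[-L,L]$ we have the uniform bounds $\sup u_\tau \leq e^L$, $\sup |\dot u_\tau| \leq e^L$, and $\sup \left( \tau \, u_\tau^2 \right) \leq \tau \, e^{2L} \to 0$; feeding these into the second equation gives a uniform bound on $\ddot u_\tau$ as well, so $\{ u_\tau \}$ is bounded in $\mathcal C^2 ([-L,L])$ for every $L$.

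Then, by Arzel\`a--Ascoli and a diagonal argument, any sequence $\tau_k \to 0$ has a subsequence along which $u_{\tau_k} \to u_0$ in $\mathcal C^1_{loc} (\mathbb R)$, with $u_0 (0) = 1$ and $\dot u_0 (0) = 0$. Since $\tau \, u_\tau^2 \to 0$ uniformly on compacts, the second equation shows that $\ddot u_{\tau_k} \to u_0$ locally uniformly, so in fact $u_{\tau_k} \to u_0$ in $\mathcal C^2_{loc}$ and $\ddot u_0 = u_0$; together with the initial conditions this forces $u_0 (s) = \cosh s$ (one may equally pass to the limit in the first identity to obtain $\dot u_0^2 = u_0^2 - 1$). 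As the limit does not depend on the chosen sequence, the whole family $\tau^{-1} \, \phi_\tau$ converges to $\cosh s$ uniformly on compacts of $\mathbb R$, which is assertion (i). For (ii), the third identity together with $\tau \, u_\tau^2 \to 0$ uniformly on compacts gives $\tau^{-1} \, \dot \psi_\tau \to -1$ uniformly on compacts; integrating from $0$ and using $\tau^{-1} \, \psi_\tau (0) = 0$ yields $\tau^{-1} \, \psi_\tau (s) \to -s$ uniformly on compacts, which is (ii).

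The only point requiring genuine care is the a priori bound of the second step: everything else is the compactness and continuous-dependence argument already used for the previous lemma. Once one observes that $|(\log u_\tau)'| \leq 1$ and that $u_\tau (0) \to 1$ (which rests on $\underline \tau^2 + \underline \tau = \tau$), the bound $u_\tau \leq e^{|s|}$ is immediate and makes the error terms $\tau \, u_\tau^2$ negligible on every fixed compact set, after which the limit is routine.
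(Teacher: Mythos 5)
Your proof is correct and follows essentially the same route as the paper: rescale $\phi_\tau$ and $\psi_\tau$ by $\tau^{-1}$, pass to the limit in the ODEs (\ref{eq:1.1}), (\ref{eq:1.00}), (\ref{eq:1.2}), and identify the limit $\cosh s$ (resp. $-s$) from the normalization at $s=0$. The only difference is that you make explicit the uniform bound $u_\tau \leq e^{|s|}$ and the Arzel\`a--Ascoli compactness step, which the paper's proof leaves implicit in the phrase ``passing to the limit''.
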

\begin{proof}
It is easy to check that $\phi_\tau$ is even and that $\frac{1}{\tau} \, \phi_\tau (0) =
\underline \tau / \tau$ converges to $1$ as $\tau$ tends to $0$ (this follows from
the fact that the function $\phi_\tau$ achieves its minimum value when $s =0$).
Passing to the limit in (\ref{eq:1.1}), we conclude that $\frac{1}{\tau} \, \phi_\tau$
converges uniformly on compacts of $\mathbb R$ to a function $\phi_0$ which is a
solution of
\[
{\ddot \phi}_0 = \phi_0 .
\]
Moreover, $\phi_0$ is even and is equal to $1$ when $s=0$. Therefore, $\phi_0
(s) = \cosh s$. Next, one can pass to the limit in (\ref{eq:1.2}) to prove that the
sequence $\frac{1}{\tau} \, \psi_\tau $ converges to a function $\psi_0$ which is a
solution of
\[
\dot \psi_0 = - 1 ,
\]
and satisfies $\psi_0(0)=0$. Therefore, we conclude that $\psi_0 (s) = - s$ as
desired.
\end{proof}

Geometrically, these results show that, as $\tau$ tends to $0$, the Delaunay
surface $\mathfrak D_\tau$ is close to infinitely many spheres of radius $1$, which
are arranged along the vertical axis (and are slightly overlapping) and each
sphere is connected to its two neighbors by small rescaled cateno\"{\i}ds.

We will need a refined and more quantitative version of Lemma~\ref{le:2.2}.
Observe that $\dot \phi < 0$ in $(- s_\tau, 0)$ and hence $\phi$ is a
diffeomorphism from $(- s_\tau , 0)$ into $(\underline \tau, \overline \tau)$.
We can define the change of variables
\[
r = \phi_\tau (s) ,
\]
to express $s \in (- s_\tau , 0)$ as a function of $r \in (\underline \tau, \overline \tau)
$ and write
\[
X_\tau (s, \theta) = \left( r\, \cos \theta , r \, \sin \theta , u_\tau (r) \right),
\]
for some function $u_\tau$ defined in an annulus of $\mathbb R^2$.
Geometrically, this means that the image of $(- s_\tau, 0) \times S^1$ by $X_\tau$
is a vertical graph for some function $u_\tau$ which is defined over the annulus
\[
\{ x \in \mathbb R^2 \, : \, \underline \tau < |x| < \overline \tau \} .
\]
The next result gives a precise expansion of the function $u_\tau$ as $\tau$ tends
to $0$.
\begin{proposition}
As $\tau$ tends to $0$,
\[
u_\tau (r) = \frac{\tau}{\sqrt{1+ 2\tau} } \, \log \left( \frac{2 \,r}{\tau } \right) +{\mathcal
O}_{\mathring {\mathcal C}^\infty} \left( \frac{\tau^3}{r^2} \right) + {\mathcal
O}_{\mathring {\mathcal C}^\infty} (r^2) ,
\]
for $r \in (2 \, \underline \tau, \frac 1 2 \, \overline \tau )$, uniformly as $\tau$ tends
to $0$.
\label{pr:4.1}
\end{proposition}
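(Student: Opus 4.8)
The strategy is to start from the defining ODEs (\ref{eq:1.1})--(\ref{eq:1.2}) and derive an explicit quadrature for $u_\tau$ in the variable $r$, then carefully expand the resulting integral. Since $\dot\psi = \phi^2 - \tau$ and $r = \phi_\tau(s)$ with $\dot\phi < 0$ on $(-s_\tau, 0)$, the chain rule gives $u_\tau'(r) = \dot\psi/\dot\phi = (\phi^2 - \tau)/\dot\phi$, and from (\ref{eq:1.1}) we have $\dot\phi = -\sqrt{\phi^2 - (\phi^2-\tau)^2}$ on this interval. Hence
\[
u_\tau'(r) = -\frac{r^2 - \tau}{\sqrt{r^2 - (r^2 - \tau)^2}} = -\frac{r^2-\tau}{\sqrt{(r^2-\underline\tau^2)(\overline\tau^2 - r^2)}}\,,
\]
using $\underline\tau\,\overline\tau = \tau$ and $\underline\tau + \overline\tau = \sqrt{1+4\tau}$ (so that $r^2 - (r^2-\tau)^2 = -(r^2 - \underline\tau^2)(r^2-\overline\tau^2)$ is a genuine factorization, after checking the sign). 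Integrating from $\underline\tau$ (where $u_\tau$ vanishes, up to an additive constant we fix later) gives $u_\tau(r)$ as an explicit elliptic-type integral depending on $\tau$.

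The heart of the matter is the asymptotic expansion of this integral on the range $r \in (2\underline\tau, \tfrac12\overline\tau)$ as $\tau \to 0$. Here I would split $r^2 - \tau = (r^2 - \underline\tau\,\overline\tau)$ and write the integrand as a sum of two pieces: one that, after the substitution $r = \underline\tau \cosh t$, produces the leading logarithmic term $\frac{\tau}{\sqrt{1+2\tau}}\log(2r/\tau)$ (note $2\underline\tau \approx \tau$ and $\underline\tau\,\overline\tau = \tau$, so $\log(r/\underline\tau) \approx \log(2r/\tau)$ after accounting for $\overline\tau \approx 1$), and a remainder. The factor $\sqrt{1+2\tau}$ should emerge from the precise relation between $\underline\tau, \overline\tau$ and $\tau$; indeed $\overline\tau - \underline\tau = 1$ is exact and one has $\underline\tau\,\overline\tau(\underline\tau+\overline\tau)^{?}$... more directly, $1/\sqrt{1+2\tau}$ is $\underline\tau\overline\tau$ divided by something, and one tracks it by expanding $\sqrt{\overline\tau^2 - r^2}$ and $\sqrt{r^2 - \underline\tau^2}$ in the regime $\underline\tau \ll r \ll \overline\tau$. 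On this regime, $\sqrt{\overline\tau^2 - r^2} = \overline\tau\sqrt{1 - r^2/\overline\tau^2}$ with $r^2/\overline\tau^2 = \mathcal O(r^2)$ small, and $\sqrt{r^2 - \underline\tau^2} = r\sqrt{1 - \underline\tau^2/r^2}$ with $\underline\tau^2/r^2 = \mathcal O(\tau^2/r^2)$ small; expanding both square roots in their respective small quantities and integrating term by term produces the two error terms $\mathcal O(\tau^3/r^2)$ and $\mathcal O(r^2)$ claimed, while the $\mathcal C^\infty$ control in $r$ comes from differentiating under the integral sign (the integrand is smooth in $r$ on the stated range and the error bounds are stable under $r\partial_r$).

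The main obstacle, and where the bookkeeping must be done with care, is twofold. First, getting the constant in front of the logarithm exactly right — i.e. producing $\tau/\sqrt{1+2\tau}$ rather than merely $\tau(1 + \mathcal O(\tau))$ — requires not truncating prematurely: one must keep the exact values $\underline\tau = (\sqrt{1+4\tau}-1)/2$, $\overline\tau = (\sqrt{1+4\tau}+1)/2$ through the computation of the coefficient of the singular part and only expand the genuinely lower-order pieces. The cleanest route is to isolate the term $-\int \tfrac{r^2 - \underline\tau\overline\tau}{\sqrt{(r^2-\underline\tau^2)(\overline\tau^2-r^2)}}\,dr$, perform the substitution $r = \underline\tau\,\overline\tau/(\underline\tau\,v) $ or a Möbius-type change adapted to the two branch points, and recognize the logarithmic primitive with its exact coefficient (which will be a rational function of $\underline\tau,\overline\tau$ evaluating to $1/\sqrt{1+2\tau}$, since $\underline\tau^2 + \overline\tau^2 = 1 + 2\tau$). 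Second, fixing the additive constant: the normalization built into the statement (the $\log(2r/\tau)$, not $\log(r/\tau)$ or $\log r$) must be matched by evaluating the primitive at a convenient reference point and absorbing the $\mathcal O(r^2)$ and $\mathcal O(\tau^3/r^2)$ discrepancies — here one checks consistency with Lemma~\ref{le:2.2}(ii), which says $\tfrac1\tau\psi_\tau \to -s$ and $\tfrac1\tau\phi_\tau \to \cosh s$, hence $u_\tau(r) \approx -\tau\,\mathrm{arccosh}(r/\tau) \approx -\tau\log(2r/\tau)$ for $\tau \lesssim r \ll 1$, up to sign conventions which must be reconciled with the orientation of the parameterization. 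Once the coefficient and the constant are pinned down, the remaining estimates are routine applications of Taylor expansion under the integral sign.
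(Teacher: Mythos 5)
Your quadrature route is sound and would prove the proposition, but it is genuinely different from the paper's argument, which never leaves the $s$-variable: there one writes $\phi = \underline\tau \cosh w$, uses (\ref{eq:1.1}) to get $\dot w^2 = 1+2\tau - \underline\tau^2(1+\cosh^2 w)$, shows by a short bootstrap that $w(s) = \sqrt{1+2\tau}\, s + \mathcal O(\tau^2\cosh^2 s)$ for $|s| \leq -\log\tau - c$, inverts $r = \underline\tau\cosh w(s)$ to obtain $\sqrt{1+2\tau}\, s = -\log(2r/\underline\tau) + \mathcal O(\tau^2/r^2) + \mathcal O(r^2)$, and integrates $\dot\psi = -\tau + \underline\tau^2\cosh^2 w$ to get $\psi(s) = -\tau s + \mathcal O(\tau^2\cosh^2 s)$; this is where the factor $\sqrt{1+2\tau}$ appears for free, and the same intermediate estimates are recycled for Lemma~\ref{le:4.33}. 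Your version integrates the explicit expression $u_\tau'(r) = -\frac{r^2-\tau}{\sqrt{(r^2-\underline\tau^2)(\overline\tau^2-r^2)}}$ from $\underline\tau$; the factorization is correct, since $\underline\tau^2$ and $\overline\tau^2$ are exactly the roots of $x \mapsto x-(x-\tau)^2$. What this buys is a more transparent $\mathring{\mathcal C}^\infty$ control: $r\partial_r u_\tau$ is an explicit algebraic function of $r$ to be expanded pointwise on $(2\underline\tau, \overline\tau/2)$, so no differentiation under the integral sign and no a posteriori control of an auxiliary function $w$ is needed; what the paper's route buys is that the constants come out with no elliptic-integral bookkeeping.

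Three adjustments would tighten your write-up. First, there is no additive constant to fix: $r=\underline\tau$ corresponds to $s=0$ and $\psi(0)=0$, so $u_\tau(\underline\tau)=0$ exactly, and the ``$2$'' in the logarithm comes from $\int_{\underline\tau}^r (\zeta^2-\underline\tau^2)^{-1/2}\, d\zeta = \mathrm{arccosh}(r/\underline\tau) = \log(2r/\underline\tau) + \mathcal O_{\mathring{\mathcal C}^\infty}(\tau^2/r^2)$ for $r \geq 2\underline\tau$. Second, do not try to extract the exact constant $\tau/\sqrt{1+2\tau}$ from the integral: splitting $\zeta^2 - \tau = (\zeta^2-\underline\tau^2) - \underline\tau$ (valid since $\underline\tau^2+\underline\tau = \underline\tau\overline\tau = \tau$), the singular part contributes $\frac{\underline\tau}{\overline\tau}\log\frac{2r}{\underline\tau}$, not literally $\frac{\tau}{\sqrt{1+2\tau}}\log\frac{2r}{\tau}$; the two differ by $\mathcal O(\tau^2\log(1/\tau))$, which is admissible because $\max(\tau^3/r^2, r^2) \geq \tau^{3/2}$ on all of $(2\underline\tau, \overline\tau/2)$. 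This absorption remark is the missing step in your sketch (the M\"obius-type substitution you gesture at is not needed), and it also shows your worry about ``not truncating prematurely'' is unfounded: any coefficient of the form $\tau(1+\mathcal O(\tau))$ is acceptable at the stated accuracy. Third, in your consistency check with Lemma~\ref{le:2.2} the sign is off: the graph corresponds to the branch $s \in (-s_\tau, 0)$, where $s<0$ and $\psi \approx -\tau s > 0$, so $u_\tau \approx +\tau\log(2r/\tau)$, in agreement with the statement; the minus sign you wrote corresponds to the other half-period.
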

The notation $f_1 = {\mathcal O}_{\mathring {\mathcal C}^\infty} (f_2)$ means that
the function $f_1$ and all its derivatives with respect to the vector fields $r \,
\partial_r$ and $\partial_\theta$ are bounded by a constant (depending on the
order of derivation) times the (positive) function $f_2$.

\begin{proof} By definition $\underline \tau$ is the minimal value of $\phi$. Hence,
we can write
\[
\phi (s) = \underline\tau \, \cosh (w(s)) ,
\]
for some function $w$ which vanishes at $s =0$. Plugging this into (\ref{eq:1.1})
we find that the function $w$ is a solution of
\[
\dot w^2 = 1 + 2 \tau -\underline \tau^2 \, (1 + \cosh^2 w) .
\]
As long as $|w(s) - s | \leq 1$, we can estimate,
\[
w (s) = \sqrt{1+ 2 \tau} \, s + {\mathcal O} (\tau^2 \, \cosh^2 s) .
\]
In particular, we conclude {\it a posteriori} that $|w(s) - s | \leq 1$ holds, and hence
that the above estimate is justified, provided $|s| \leq - \log \tau - c$, for some
constant $c >0$ independent of $\tau \in (0, 1)$. In the range of study, we are
entitled to consider the change of variable
\[
r = \underline \tau \, \cosh w (s) ,
\]
and express $s < 0$ as a function of $r$. We find
\begin{equation}
\sqrt{1+2\tau} \, s = - \log \left( \frac{2\, r}{\underline \tau} \right) + \mathcal O
\left( \frac{\tau^2}{r^2} \right) + \mathcal O (r^2) .
\label{eq:200}
\end{equation}
Finally, using (\ref{eq:1.2}), we can write
\[
\dot \psi = - \tau + \underline \tau^2 \, \cosh^2 w .
\]
Integrating over $s$ we get
\[
\psi (s) = - \tau s + \mathcal O (\tau^2 \, \cosh^2 s) ,
\]
and the result follows directly from (\ref{eq:200}) together with the fact that, by
definition
\[
u_\tau (r) = \psi (s) .
\]
Similar estimates can be obtained for the derivatives of $u_\tau$.
\end{proof}

A close inspection of the proof of Proposition~\ref{pr:4.1} also yields the~:
\begin{lemma}
As $\tau$ tends to $0$, half of the fundamental period of the function $\phi_\tau
$ can be expanded as
\[
s_\tau = - \log \tau + \mathcal O (1) ,
\]
and there exists a constant $C >1$ such that
\[
\frac{\tau}{C} \cosh s \leq \phi_\tau \leq C \, \tau \cosh s ,
\]
when $ s \in (- s_\tau, s_\tau)$; this estimate being uniform as $\tau$ tends to $0$.
\label{le:4.33}
\end{lemma}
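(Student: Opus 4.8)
The plan is to extract the claimed expansions directly from the analysis already carried out in the proof of Proposition~\ref{pr:4.1}, so that no genuinely new computation is required. First I would establish the expansion of $s_\tau$. By definition $s_\tau$ is the value of $s>0$ at which $\phi_\tau$ first returns to its maximum $\overline\tau$, equivalently (using the parametrization $\phi=\underline\tau\cosh w$ on $(-s_\tau,0)$ and evenness) the value $-s_\tau$ is where $r=\underline\tau\cosh w(s)$ reaches $\overline\tau$. Plugging $r=\overline\tau$ into formula (\ref{eq:200}), and recording that $\underline\tau=\tau+\mathcal O(\tau^2)$ and $\overline\tau=1+\mathcal O(\tau)$ as $\tau\to 0$, gives
\[
\sqrt{1+2\tau}\,s_\tau = \log\!\left(\frac{2\,\overline\tau}{\underline\tau}\right) + \mathcal O(\tau) = -\log\tau + \mathcal O(1),
\]
and since $\sqrt{1+2\tau}=1+\mathcal O(\tau)$ while $s_\tau=\mathcal O(-\log\tau)$, we get $s_\tau=-\log\tau+\mathcal O(1)$, which is the first assertion. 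A small point to check here is that the error estimates in (\ref{eq:200}), derived for $r\in(2\underline\tau,\tfrac12\overline\tau)$, still control the endpoint behavior up to $\mathcal O(1)$; this is handled by noting that $\phi$ travels from $\tfrac12\overline\tau$ to $\overline\tau$ in an $s$-interval of length $\mathcal O(1)$ (uniformly in $\tau$), since on that range equation (\ref{eq:1.1}) is a regular ODE with $\tau\to 0$ limit $\dot\phi^2=\phi^2-\phi^4$, $\phi(s)\to(\cosh s)^{-1}$.

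Next I would prove the two-sided bound on $\phi_\tau$. The upper bound is the delicate direction. From the relation $\phi=\underline\tau\cosh w$ and $w(s)=\sqrt{1+2\tau}\,s+\mathcal O(\tau^2\cosh^2 s)$ valid for $|s|\le -\log\tau-c$, we have $\cosh w(s)\le C\cosh(\sqrt{1+2\tau}\,s)\le C'\cosh s$ on that range (using that the correction $\mathcal O(\tau^2\cosh^2 s)$ stays bounded there), hence $\phi_\tau(s)\le C\,\underline\tau\cosh s\le C\,\tau\cosh s$ after absorbing $\underline\tau/\tau=1+\mathcal O(\tau)$. For the remaining range $-\log\tau-c\le |s|\le s_\tau$, which has bounded length by the $s_\tau$ expansion just established, $\phi_\tau$ is bounded above by $\overline\tau\le 2$ while $\tau\cosh s\ge \tau\cosh(-\log\tau-c)\ge c'$ for a fixed $c'>0$; enlarging $C$ covers this piece. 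The lower bound is symmetric: for $|s|\le -\log\tau-c$ one uses $\cosh w(s)\ge c\cosh(\sqrt{1+2\tau}\,s)\ge c'\cosh s$ and $\underline\tau/\tau\to 1$; for the leftover bounded range one uses $\phi_\tau\ge\underline\tau$ together with $\tau\cosh s\le\tau\cosh s_\tau=\mathcal O(1)\cdot\tau\cdot\tau^{-1}=\mathcal O(1)$, so again a fixed constant works. I expect the main obstacle to be bookkeeping the transition region where the "catenoidal" estimate of Proposition~\ref{pr:4.1} degrades (near $|s|\sim -\log\tau$) and patching it to the "spherical" regime near $|s|\sim s_\tau$; the key observation that makes this painless is precisely that both regimes overlap in an $s$-window of $\tau$-uniformly bounded length, which is itself a consequence of the $s_\tau=-\log\tau+\mathcal O(1)$ expansion. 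All derivative statements implicit in the uniformity follow from the corresponding derivative estimates already asserted at the end of the proof of Proposition~\ref{pr:4.1}.
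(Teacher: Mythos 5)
Your route is essentially the paper's own: the published proof of this lemma is a two-line remark that the period expansion can be read off from the integral formula (\ref{eq:1/2period}) and that the two-sided bound on $\phi_\tau$ follows from the proof of Proposition~\ref{pr:4.1}, and what you do is carry out exactly that program in detail. Your derivation of $s_\tau=-\log\tau+\mathcal O(1)$ from (\ref{eq:200}) together with an $\mathcal O(1)$ matching window, and your upper bound $\phi_\tau\le C\,\tau\cosh s$ obtained by splitting at $|s|=-\log\tau-c$, are correct as written.

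There is, however, one step that fails as stated: the lower bound on the transition region $-\log\tau-c\le |s|\le s_\tau$. There you invoke only $\phi_\tau\ge\underline\tau$ together with $\tau\cosh s\le\tau\cosh s_\tau=\mathcal O(1)$; but $\underline\tau\sim\tau$, so these two facts compare a quantity of size $\mathcal O(1)/C$ with a lower bound of size $\tau$, and the inequality $\frac{\tau}{C}\cosh s\le\phi_\tau$ would then force $C\gtrsim\tau^{-1}$ rather than a fixed constant. What you actually need, and what is true, is that $\phi_\tau$ is bounded below by a positive constant on this window: since $\dot\phi_\tau>0$ on $(0,s_\tau)$ (and $\phi_\tau$ is even), on the window $\phi_\tau$ is at least its value at $|s|=-\log\tau-c$, which your own catenoidal estimate $\phi_\tau=\underline\tau\cosh w$ with $w=\sqrt{1+2\tau}\,s+\mathcal O(\tau^2\cosh^2 s)$ shows is bounded below by a positive constant (of order $e^{-c}$) uniformly in $\tau$; alternatively the first lemma of \S 4.2 gives $\phi_\tau(\cdot+s_\tau)\to(\cosh s)^{-1}$ uniformly on the bounded window. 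With that, $\tau\cosh s\le\tau\cosh s_\tau=\mathcal O(1)\le C\,\phi_\tau$ for a fixed large $C$, and the proof closes. A smaller remark: your justification that $\phi_\tau$ travels from $\tfrac12\overline\tau$ to $\overline\tau$ in uniformly bounded time leans on the ODE (\ref{eq:1.1}) being \emph{regular} there, but $\dot\phi$ vanishes at $\overline\tau$; the clean argument is either the integral $\int_{\overline\tau/2}^{\overline\tau}\bigl(\zeta^2-(\zeta^2-\tau)^2\bigr)^{-1/2}\,d\zeta$, whose square-root singularity at $\overline\tau$ has a uniformly nondegenerate coefficient and is therefore uniformly $\mathcal O(1)$ (this is exactly the paper's pointer to (\ref{eq:1/2period})), or again the uniform convergence $\phi_\tau(\cdot+s_\tau)\to(\cosh s)^{-1}$ on compacts.
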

\begin{proof}
The asymptotic of the half period of $\phi$ can also be derived from the formula
(\ref{eq:1/2period}). The estimate for $\phi$ follows from the proof of Proposition~
\ref{pr:4.1}.
\end{proof}

\subsection{Analysis of the Jacobi operator}

We analyze the mapping properties of the Jacobi operator about the Delaunay
surface ${\mathfrak D}_\tau$, paying special attention to what happens when $
\tau$ tends to $0$. This analysis is very close to the one available in
\cite{Maz-Pac-1} or \cite{Hau-Pac}. Again, we give here a self contained proof
which is adapted to the nonlinear argument we will use in the subsequent
sections.

We first analyze the behavior, as $\tau$ tends to $0$, of the potential which
appears in the expression of $J_\tau$. To this aim we assume that we are given
for each $\tau >0$ a real number $t_\tau \in \mathbb R$ and we define
\[
\xi_\tau : = \left( \phi_\tau^2 + \frac{\tau^2}{\phi_\tau^2}\right) (\cdot - t_\tau) .
\]
We have the~:
\begin{lemma}
As $\tau$ tends to $0$, a subsequence of the sequence of functions $\xi_\tau$
converges uniformly on compacts of $\mathbb R$ either to $s \longmapsto (\cosh
(s - s_0))^{-2}$, for some $s_0 \in \mathbb R$, or to $0$.
\label{le:2.4}
\end{lemma}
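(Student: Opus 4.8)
The plan is to exploit the precise two-scale structure of $\phi_\tau$ on a half-period, as made quantitative in Lemma~\ref{le:2.2} and Lemma~\ref{le:4.33}, and to track the location of the translation point $t_\tau$ relative to the nearest ``center'' of a bubble, i.e.\ the nearest point of the form $k\, s_\tau$ with $k \in \mathbb Z$. By the periodicity of $\phi_\tau$ we may, after translating by a multiple of the period and passing to a subsequence, assume that the translated variable is measured from the half-period point $s_\tau$ (where $\phi_\tau$ attains its maximum $\overline\tau$). Concretely, write $t_\tau = k_\tau\, s_\tau + a_\tau$ with $k_\tau \in \mathbb Z$ and $a_\tau \in (-s_\tau, s_\tau]$; since $\phi_\tau$ has period $2 s_\tau$ we may replace $t_\tau$ by $a_\tau$ (up to the parity/symmetry of $\phi_\tau$) without changing the function $\xi_\tau$. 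So it suffices to analyze $\xi_\tau = \bigl(\phi_\tau^2 + \tau^2/\phi_\tau^2\bigr)(\cdot - a_\tau)$ with $a_\tau \in (-s_\tau, s_\tau]$.

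Now I would split into three regimes according to the behavior of $a_\tau$ as $\tau \to 0$, again passing to a further subsequence so that one of them holds. First, if $a_\tau \to s_0$ for some finite $s_0$ (more precisely if $a_\tau$ stays bounded along the subsequence and converges), then on any compact set $[-R,R]$ the argument $s - a_\tau$ stays in a fixed compact set, and Lemma~\ref{le:2.2}(i) shows $\phi_\tau(\cdot + s_\tau) \to (\cosh\,\cdot\,)^{-1}$ uniformly on compacts — hence $\phi_\tau^2(s - a_\tau) \to (\cosh(s - s_0))^{-2}$ while $\tau^2/\phi_\tau^2(s-a_\tau) \to 0$ uniformly on $[-R,R]$ because $\phi_\tau$ is bounded below by a positive constant there. (Here one shifts the statement of Lemma~\ref{le:2.2} by $s_\tau$, which is legitimate since the ODE \eqref{eq:1.00} and the even/max normalization at $s_\tau$ pin down the limit $(\cosh\,\cdot\,)^{-2}$ for $\phi^2$ near the bubble center.) This gives the first alternative, the limit $s\mapsto(\cosh(s-s_0))^{-2}$.

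Second, if $|a_\tau| \to \infty$ but $a_\tau$ stays at definite distance from the endpoints, say $|a_\tau| \le s_\tau - \log(1/\tau)^{1/2}$ or more simply $|a_\tau| \le s_\tau - C$ for every $C$, i.e.\ we are deep in the neck region, then by Lemma~\ref{le:4.33} we have $\phi_\tau(s - a_\tau) \le C\,\tau \cosh(s - a_\tau) \to 0$ uniformly on each compact $[-R,R]$ (since $\cosh(s-a_\tau)$ grows only like $e^{|a_\tau|}$ but is multiplied by $\tau$, and $|a_\tau|$ is controlled); more carefully, on a compact set $\phi_\tau(s-a_\tau) = \mathcal O(\tau \cosh a_\tau)$ and simultaneously $\phi_\tau \ge \underline\tau \sim \tau$, so $\tau^2/\phi_\tau^2(s-a_\tau) = \mathcal O(\tau^2/\underline\tau^2) = \mathcal O(1)$ — which is not obviously going to zero. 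This is where the argument needs care: in the neck, $\phi_\tau^2$ is tiny but $\tau^2/\phi_\tau^2$ need not be, so I would instead renormalize using the rescaled-catenoid picture of Lemma~\ref{le:2.2}: near the center of a neck $\frac1\tau \phi_\tau \to \cosh$, so $\phi_\tau^2 + \tau^2/\phi_\tau^2 = \tau^2(\cosh^2 + \cosh^{-2})(\cdot) + o(\tau^2) \to 0$; and if $a_\tau$ sits at an intermediate scale between bubble center and neck center, the same estimates \eqref{eq:200}, $\phi_\tau \sim \underline\tau\cosh(w(s))$ with $w(s)\sim\sqrt{1+2\tau}\,s$, give $\phi_\tau^2 + \tau^2/\phi_\tau^2 \le C(\phi_\tau^2 + \tau^2/\phi_\tau^2)$ evaluated on a compact set around a point where $\phi_\tau \to 0$ but $\phi_\tau/\tau \to \infty$, forcing both terms to $0$. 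So in every ``away from a bubble center'' scenario the limit is $0$, the second alternative. The main obstacle, as just indicated, is exactly this: one must rule out that $\tau^2/\phi_\tau^2$ contributes a non-trivial limit, which it could a priori do only if $a_\tau$ sat precisely at the catenoid-neck scale $\phi_\tau \sim \tau$ — and there Lemma~\ref{le:2.2}(i) ($\frac1\tau\phi_\tau\to\cosh$) saves the day, giving $\phi_\tau^2 + \tau^2/\phi_\tau^2 = \mathcal O(\tau^2)\to 0$ on compacts. Finally, since $a_\tau \in (-s_\tau,s_\tau]$ the only remaining case, $a_\tau \to \pm s_\tau$, is handled by the periodicity/evenness of $\phi_\tau$ (it is the bubble-center case again, with $s_0$ possibly $\pm\infty$, i.e.\ limit $0$, or the neighboring bubble, i.e.\ limit $(\cosh(s-s_0))^{-2}$ after the appropriate integer shift). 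Collecting the subsequential limits across all regimes yields exactly the dichotomy claimed, completing the proof.
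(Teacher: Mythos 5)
There is a genuine error at precisely the point you yourself flag as the main obstacle. Near a neck center the term $\tau^2/\phi_\tau^2$ does \emph{not} vanish in the limit: if $\tfrac1\tau\,\phi_\tau\to\cosh$ uniformly on compacts, then $\phi_\tau^2=\tau^2\cosh^2(\cdot)\,(1+o(1))\to 0$ but $\tau^2/\phi_\tau^2=(\cosh(\cdot))^{-2}(1+o(1))$, so $\xi_\tau\to(\cosh(\,\cdot-s_0))^{-2}$, not $0$. Your computation ``$\phi_\tau^2+\tau^2/\phi_\tau^2=\tau^2(\cosh^2+\cosh^{-2})+o(\tau^2)\to 0$'' divides incorrectly, and the conclusion ``in every away-from-a-bubble-center scenario the limit is $0$'' is false: the dichotomy of the lemma survives only because the neck regime happens to produce the \emph{other} admissible limit, $(\cosh(s-s_0))^{-2}$, which your argument does not establish. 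Two further weak points: the intermediate-regime step rests on the vacuous inequality ``$\phi_\tau^2+\tau^2/\phi_\tau^2\le C(\phi_\tau^2+\tau^2/\phi_\tau^2)$'' (what is actually needed, and what Lemma~\ref{le:4.33} does give via the two-sided bound $\tfrac\tau C\cosh s\le\phi_\tau\le C\tau\cosh s$, is that both $\phi_\tau$ and $\tau/\phi_\tau$ tend to $0$ uniformly on compact windows whose center drifts away from both the neck and the sphere scales); and the reduction $t_\tau=k_\tau s_\tau+a_\tau$ muddles the fundamental domain, since the period is $2s_\tau$ and the points $0$ and $\pm s_\tau$ are special points of \emph{different} type (neck versus sphere equator), so ``$a_\tau\to\pm s_\tau$'' is the neck case, not ``the neighboring bubble'', and the odd-$k_\tau$ shifts must be tracked through the reflection symmetry.

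The paper avoids this case analysis altogether: it sets $\zeta_\tau:=\bigl(\phi_\tau+\tfrac{\tau}{\phi_\tau}\bigr)(\cdot-t_\tau)$, notes from (\ref{eq:1.1}) and (\ref{eq:1.00}) that $\dot\zeta_\tau^2=(\zeta_\tau^2-2\tau)(1+4\tau-\zeta_\tau^2)$ and $\ddot\zeta_\tau=\zeta_\tau(1+6\tau-2\zeta_\tau^2)$ with $\zeta_\tau^2\le 1+4\tau$, passes to the limit in these equations, and classifies the bounded nonnegative limits as $0$ or a translate of $(\cosh)^{-1}$; the lemma then follows from the algebraic identity $\xi_\tau=\zeta_\tau^2-2\tau$. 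This identity is also the quickest repair of your argument: whether $t_\tau$ sits (mod period) near a neck or near a sphere equator, $\zeta_\tau(t_\tau)\to 1$ and the limit is $(\cosh(s-s_0))^{-2}$; the limit is $0$ exactly when $\phi_\tau(t_\tau)\to 0$ and $\tau/\phi_\tau(t_\tau)\to 0$ simultaneously, i.e.\ in the intermediate regime. If you want to keep a regime-by-regime proof, you must treat the neck regime as a second source of the $(\cosh(s-s_0))^{-2}$ limit and justify the intermediate regime with the quantitative bounds of Lemma~\ref{le:4.33}, rather than sending everything away from the sphere centers to $0$.
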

\begin{proof}
We define
\[
\zeta_\tau : = \left( \phi_\tau + \frac{\tau}{\phi_\tau}\right) (\cdot - t_\tau) ,
\]
and we observe that, using (\ref{eq:1.1}) we find that $\zeta_\tau$ is a solution of
\begin{equation}
\dot \zeta_\tau^2 = (æ\zeta_\tau^2 - 2 \, \tau) \, (1+ 4 \, \tau - \zeta_\tau^2) ,
\label{eq:2.1}
\end{equation}
and (\ref{eq:1.00}) also implies that
\begin{equation}
\ddot \zeta_\tau = \zeta_\tau \, (1+ 6 \, \tau - 2 \, \zeta_\tau^2) .
\label{eq:2.11}
\end{equation}
Now, we can estimate
\[
\zeta_\tau^2 = \left( \phi - \frac{\tau}{\phi}\right)^2 + 4 \tau \leq 1+ 4 \, \tau ,
\]
where we have used (\ref{eq:1.1}) which provides the estimate $(\phi^2-\tau)^2
\leq \phi^2$. This implies that $\zeta_\tau$ and its derivatives remain bounded as
$\tau$ tends to $0$. We can then let $\tau$ tend to $0$ and pass to the limit in
(\ref{eq:2.1}) and (\ref{eq:2.11}) to get that, as $\tau$ tends to $0$, the sequence $
\zeta_\tau$ converges on compacts to a solution of the equation $\ddot \zeta =
\zeta \, (1- 2 \, \zeta^2) $ which satisfies $\dot \zeta^2 = æ\zeta^2 \, (1- \zeta^2)$.
Hence $\zeta$ is either $0$ or a translation of $(\cosh s)^{-2}$. The result
then follows from the identity $\xi_\tau = \zeta_\tau^2 - 2 \, \tau$.
\end{proof}

We denote by $\pm \delta_j (\tau)$, for $j \in \mathbb N$, the indicial roots of the
operator $J_\tau$. Recall that the indicial roots $\pm \delta_j$ correspond to the
indicial roots of
\[
J_{\tau, j} : = \frac{1}{2 \, \phi^2} \, \left( \partial_t^2 -j^2+ \tau^2 \, \left( \phi^2 +
\frac{\tau^2}{\phi^2}\right) \right) ,
\]
which appears in the Fourier decomposition of the operator $J_\tau$ in the $\theta
$ variable. By definition, the indicial roots of $J_{\tau, j}$ characterize the
exponential growth or decay rate at infinity of the solutions of the homogeneous
problem $J_{\tau, j} \, w =0$. In general, it is a very hard problem to determine the
exact value of the indicial roots of an operator, in the present case, taking
advantage of the geometric nature of the problem, we can prove the following~:
\begin{proposition}
For all $\tau >0$,
\[
\delta_0 (\tau) = \delta_1 (\tau)=0 .
\]
Furthermore, for $j \geq 2$
\[
\delta_j(\tau) \geq \sqrt{j^2-2- 4\tau} ,
\]
provided $\tau < \sqrt{j^2-2}$.
\label{pr:deltaj}
\end{proposition}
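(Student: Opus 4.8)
The plan is to exploit the geometric origin of the two Jacobi fields coming from rigid motions of $\mathbb R^3$, together with a comparison argument for the Fourier modes $j\ge 2$. For the first assertion, note that $J_\tau(N_\tau\cdot\Xi)=0$ for any Killing field $\Xi$. Taking $\Xi=\partial_z$ (the generator of vertical translations) gives a Jacobi field which is independent of $\theta$; computing $N_\tau\cdot\partial_z=\dot\phi_\tau/\phi_\tau$ shows it is globally bounded and periodic in $s$, hence neither growing nor decaying, so $\delta_0(\tau)=0$. Similarly, taking $\Xi$ to be the generator of a rotation about a horizontal axis produces a Jacobi field of the form $a(s)\cos\theta+b(s)\sin\theta$ whose coefficients are again bounded and periodic; this forces the indicial root in the $j=\pm 1$ mode to vanish as well, i.e. $\delta_1(\tau)=0$. (One should check that the bounded solution in these modes is genuinely the one realized by the Killing field and that no faster decay is hidden; since $\phi_\tau$ is periodic and bounded away from $0$, Floquet theory tells us the indicial root is $0$ precisely when a bounded nonzero solution exists, which the Killing field provides.)

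For $j\ge 2$, I would work directly with the operator
\[
J_{\tau,j}=\frac{1}{2\,\phi^2}\Big(\partial_t^2-j^2+\tau^2\Big(\phi^2+\frac{\tau^2}{\phi^2}\Big)\Big),
\]
whose indicial roots are determined by the asymptotic behavior of solutions of $\big(\partial_t^2-j^2+\tau^2(\phi^2+\tau^2\phi^{-2})\big)w=0$. The key estimate is an upper bound on the potential term: from the proof of Lemma~\ref{le:2.4} we have $\zeta_\tau^2=\phi^2+\tau^2\phi^{-2}+2\tau\le 1+4\tau+2\tau$, hence
\[
\tau^2\Big(\phi^2+\frac{\tau^2}{\phi^2}\Big)\le \tau^2(1+4\tau)\le \tau(1+4\tau)\le 4\tau
\]
for $\tau$ small — or more carefully, one keeps the bound $\phi^2+\tau^2\phi^{-2}\le 1+4\tau$ (valid since $(\phi^2-\tau)^2\le\phi^2$ gives $\phi-\tau/\phi\le 1$ so $(\phi-\tau/\phi)^2\le 1$, whence $\phi^2+\tau^2/\phi^2=(\phi-\tau/\phi)^2+2\tau\le 1+2\tau$, and actually the sharper $1+4\tau$ as in the lemma). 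Then $-j^2+\tau^2(\phi^2+\tau^2\phi^{-2})\le -j^2+\tau^2(1+4\tau)\le -(j^2-2-4\tau)$ for $\tau$ small, so the operator $\partial_t^2-j^2+\tau^2(\phi^2+\tau^2\phi^{-2})$ is bounded above by $\partial_t^2-(j^2-2-4\tau)$ as a Schrödinger operator (the potential of the former dominates that of the latter pointwise in the favorable direction).

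The comparison is then carried out via the standard ODE argument: if $w>0$ solves $\ddot w=(j^2-\tau^2(\phi^2+\tau^2\phi^{-2}))w$ and decays at $+\infty$, then since $j^2-\tau^2(\phi^2+\tau^2\phi^{-2})\ge j^2-2-4\tau$ pointwise, a Riccati/comparison argument (comparing $\dot w/w$ with the constant $-\sqrt{j^2-2-4\tau}$, using that the latter solves the Riccati equation for the constant-coefficient model) shows $w$ decays at least as fast as $e^{-\sqrt{j^2-2-4\tau}\,t}$, i.e. $\delta_j(\tau)\ge\sqrt{j^2-2-4\tau}$ whenever $j^2-2-4\tau>0$, that is $\tau<\sqrt{j^2-2}$ (after a mild adjustment; note $j^2-2\le j^2-2-4\tau$ is false, so the stated hypothesis $\tau<\sqrt{j^2-2}$ must be read as ensuring $j^2-2-4\tau>0$ for the square root to make sense, which holds once $\tau<(j^2-2)/4$, a slightly stronger condition — I would double-check which threshold the authors intend and state the comparison accordingly). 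The main obstacle is the first part: establishing rigorously that the bounded Jacobi fields produced by the Killing fields are the \emph{slowest-decaying} solutions in the $j=0,1$ modes, so that the indicial root is exactly $0$ and not something one must compute by a separate Floquet analysis; this requires knowing that in these modes the homogeneous equation has no solution decaying faster than any bounded one, which follows from the reversibility and periodicity of the coefficient $\phi_\tau$ together with the explicit bounded solutions, but deserves a careful statement.
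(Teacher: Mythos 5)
Your overall strategy is the same as the paper's: exhibit periodic geometric Jacobi fields in the modes $j=0,1$ to get $\delta_0=\delta_1=0$, then bound the potential of $J_{\tau,j}$ via the first integral $\dot\phi^2+(\phi^2-\tau)^2=\phi^2$ and run a comparison/barrier argument for $j\geq 2$. For $j\geq 2$ your estimate $\phi^2+\tau^2/\phi^2=(\phi-\tau/\phi)^2+2\tau\leq 1+2\tau$ is exactly the paper's inequality; just note that the correct coefficient in the Fourier-mode operator is $2$ (coming from $\phi^2 J_\tau=\tfrac12(\partial_s^2+\partial_\theta^2+2(\phi^2+\tau^2/\phi^2))$ --- the $\tau^2$ in the displayed definition of $J_{\tau,j}$ is a typo of the paper which you inherited). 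Working with the coefficient $2$, your bound gives $j^2-2(\phi^2+\tau^2/\phi^2)\geq j^2-2-4\tau$ pointwise with no smallness assumption on $\tau$, and the Riccati comparison you describe is the same mechanism as the paper's barriers $e^{\pm\sqrt{j^2-2-4\tau}\,s}$; your remark that the hypothesis should really be read as $j^2-2-4\tau>0$ concerns the statement, not the argument.

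The genuine flaw is in your $j=1$ step. Rotation about a horizontal axis does \emph{not} produce a bounded periodic Jacobi field on $\mathfrak D_\tau$: with $\Xi=e_1\times x$ one computes $N_\tau\cdot\Xi=\bigl(\dot\phi-\tfrac{\tau-\phi^2}{\phi}\,\psi\bigr)\sin\theta$, and since $\psi$ grows linearly along the axis this Jacobi field grows linearly in $s$, so the premise of your Floquet argument (``a bounded nonzero solution, which the Killing field provides'') fails as written. The correct choice --- the one the paper makes --- is the horizontal translations $e_1,e_2$, whose Jacobi fields are $\pm(\phi-\tau/\phi)\cos\theta$ and $\pm(\phi-\tau/\phi)\sin\theta$; here the coefficient $\phi-\tau/\phi$ is periodic, so $J_{\tau,1}$ has a periodic solution and $\delta_1(\tau)=0$ by the same reasoning you use for $j=0$. (One could instead salvage the rotational field by arguing that linear growth is still subexponential, hence the Floquet exponent vanishes, but you claimed boundedness, which is false.) Your closing worry about the bounded solution being the ``slowest-decaying'' one is unnecessary: for Hill's equations the two Floquet exponents are $\pm\mu$, so the existence of a single periodic (or merely subexponentially growing) nontrivial solution already forces $\mu=0$, which is exactly the one-line argument the paper uses.
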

\begin{proof}
The fact that $\delta_0 (\tau) =0$ follows from the observation that the function $
\frac{\dot \phi}{\phi}$ is periodic and solves
\[
J_{\tau, 0} \, \left( \frac{\dot \phi}{\phi} \right) =0 .
\]
This either follows from direct computation or can be derived from the fact that
\[
\Phi^+_0 : = \frac{\dot \phi}{\phi} ,
\]
is the Jacobi field associated to vertical translation (see \S 2.2). Since the function
$\phi$ is periodic, the homogeneous problem $J_{\tau, 0} \, w =0$ has a bounded
solution and this implies that $\delta_0( \tau ) =0$.

The fact that $\delta_1(\tau) =0$ follows from the observation that the function $
\phi - \frac{\tau}{\phi}$ is periodic and solves
\[
J_{\tau, 1} \, \left( \phi - \frac{\tau}{\phi}\right) =0 .
\]
Again, this either follows from direct computation or can be derived from the fact
that
\[
\Phi^+_1 : = \left( \phi - \frac{\tau}{\phi}\right) \, \cos \theta \qquad \mbox{and}
\qquad \Phi^+_{-1} : = \left( \phi - \frac{\tau}{\phi}\right) \, \sin \theta ,
\]
are the Jacobi fields associated to translations perpendicular to the axis of the
Delaunay surface.

The estimate from below for the other indicial roots follows from the fact that,
according to (\ref{eq:1.1})
\[
2 \, \left( \phi^2 + \frac{\tau^2}{\phi^2}\right) = 2 \, \left( \phi - \frac{\tau}{\phi}
\right)^2 + 4 \, \tau \leq 2 +æ4 \, \tau .
\]
This in particular implies that the potential in $\partial_s^2 -j^2 + 2 \, \left( \phi^2 +
\frac{\tau^2}{\phi^2}\right)$ can be estimated from below by $\bar \delta_j^2$,
where
\[
\bar \delta_j : = \sqrt{j^2-2- 4\, \tau } .
\]
The result then follows from the maximum principle and standard ODE arguments
since the function $s \longmapsto e^{\bar \delta_j s}$ can be used as a barrier to
prove the existence of two positive solutions of $J_{\tau, j} \, w =0$ which are
defined on $(0, \infty)$, one of which being bounded from above by $e^{-\bar
\delta_j s}$ and the other one being bounded from below by $e^{\bar \delta_j s}$.
In particular, this implies that $\delta_j \geq \bar \delta_j$ and hence, this
completes the proof of the result.
\end{proof}

For all $\delta\in \mathbb R$, we define the operator
\[
\begin{array}{rcccllll}
L_\delta : & e^{\delta s} \, \mathcal C^{2, \alpha} (\mathbb R \times S^1) &
\longrightarrow & e^{\delta s} \, \mathcal C^{0, \alpha} (\mathbb R \times S^1)
\\[3mm]
& w & \longmapsto & \phi^2 \, J_{\tau} \, w ,
\end{array}
\]
where the norms in the function spaces $\mathcal C^{k, \alpha} (\mathbb R \times
S^1)$ are computed with respect to the cylindrical metric $g_{cyl}$. Observe that
the Jacobi operator has been multiplied by the conformal factor $\phi^2$ and
hence
\[
\phi^2 \, J_\tau = \frac 1 2 \, \left( \partial_s^2 + \partial_\theta^2 + 2 \, \left( \phi^2 +
\frac{\tau^2}{\phi^2}\right) \right) .
\]
Also, this operator depends on the parameter $\tau$. We now study the mapping
properties of $ \phi^2 \, J_\tau$ as the parameter $\tau$ tends to $0$. The following
result selects a range of weights for which the norm of the the solution of $L_\delta
\, w = f$ is controlled, uniformly as $\tau$ tends to $0$.
\begin{proposition}
Assume that $|\delta | >1$, $\delta \notin \mathbb Z$ is fixed. Then, there exist $
\tau_\delta >0$ and $C >0$, only depending on $\delta$, such that, for all $\tau \in
(0, \tau_\delta)$ and for all $w \in e^{\delta s} \, \mathcal C^{2, \alpha} (\mathbb R
\times S^1)$, we have
\[
\|æe^{- \delta s} \, w \|_{\mathcal C^{2, \alpha} (\mathbb R \times S^1) }æ\leq C\, \| e^{-
\delta s} \, L_\delta \, w \|_{\mathcal C^{0, \alpha} (\mathbb R \times S^1) } .
\]
\label{pr:linres10}
\end{proposition}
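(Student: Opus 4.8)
The plan is to establish the estimate by a contradiction/normal-families argument, exploiting the fact (Lemma~\ref{le:2.4}) that the potential $2(\phi^2 + \tau^2/\phi^2)$ in $\phi^2 J_\tau$ degenerates, as $\tau \to 0$, to a sum of translated copies of the soliton potential $2(\cosh s)^{-2}$ (one near each ``neck'', separated by distances $\sim 2 s_\tau = -2\log\tau \to \infty$) plus possibly the zero potential. Suppose the estimate fails: then there are sequences $\tau_k \to 0$, $w_k \in e^{\delta s}\,\mathcal C^{2,\alpha}(\mathbb R\times S^1)$ with $\|e^{-\delta s} w_k\|_{\mathcal C^{2,\alpha}} = 1$ and $\|e^{-\delta s} L_\delta w_k\|_{\mathcal C^{0,\alpha}} \to 0$. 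Write $\tilde w_k := e^{-\delta s} w_k$, so $\|\tilde w_k\|_{\mathcal C^{2,\alpha}} = 1$; the conjugated operator $e^{-\delta s} L_\delta e^{\delta s}$ equals $\tfrac12(\partial_s^2 + 2\delta \partial_s + \delta^2 + \partial_\theta^2 + 2(\phi^2 + \tau^2/\phi^2))$, and $\tilde w_k$ solves this operator applied to $\tilde w_k$ equals $\tilde f_k$ with $\|\tilde f_k\|_{\mathcal C^{0,\alpha}} \to 0$.

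First I would locate the point $s_k$ where $|\tilde w_k|$ comes within, say, $1/2$ of its supremum (such a point exists since the norm is exactly $1$; a small argument using interior Schauder estimates shows the $\mathcal C^{2,\alpha}$ norm cannot concentrate purely in derivatives without the function itself being nontrivially large somewhere). Recentering, set $\hat w_k(s,\theta) := \tilde w_k(s + s_k, \theta)$ and $\hat t_k := s_k$ in the notation of Lemma~\ref{le:2.4}, so that $\xi_{\tau_k}$ converges (along a subsequence) either to a translate of $(\cosh s)^{-2}$ or to $0$. By interior Schauder estimates, $\hat w_k$ converges in $\mathcal C^{2}_{\mathrm{loc}}$ to a limit $\hat w_\infty$ which is bounded (by $1$), not identically zero (it is $\geq 1/2$ in absolute value at the origin, up to the reduction above), and solves the limiting constant-or-soliton equation
\[
\tfrac12\bigl(\partial_s^2 + 2\delta \partial_s + \delta^2 + \partial_\theta^2 + V_\infty\bigr)\hat w_\infty = 0,
\]
where $V_\infty$ is either $2(\cosh(s-s_0))^{-2}$ or $0$. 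Now I would Fourier-decompose in $\theta$: for the $0$ potential the $n$-th mode satisfies $\hat w_\infty'' + 2\delta \hat w_\infty' + (\delta^2 - n^2)\hat w_\infty = 0$, whose characteristic exponents are $-\delta \pm \sqrt{n^2}= -\delta \pm |n|$; since $|\delta| > 1$ and $\delta\notin\mathbb Z$, for every $n\in\mathbb Z$ both exponents are nonzero and the two exponentials $e^{(-\delta\pm|n|)s}$ are each unbounded on at least one end of $\mathbb R$, so the only bounded solution is $\hat w_\infty \equiv 0$ --- a contradiction. For the soliton potential, the $n\neq 0,\pm1$ modes again have no bounded nonzero solution because the potential only helps (it is positive and decays, so the exponents at $\pm\infty$ are still essentially $-\delta\pm|n|$, never zero), while the $n=0$ and $n=\pm1$ modes are handled using the explicit Jacobi fields: the bounded solutions of the homogeneous soliton equation in those modes are exactly the translates of $(\cosh s)^{-1}$ (for $n=0$) and of $\tanh s\cdot(\cosh s)^{-1}$-type functions (for $n=\pm1$), i.e. they come from the Jacobi fields $\Phi_0^\pm, \Phi_{\pm1}^\pm$ restricted to the limiting catenoid/sphere, and for $|\delta|>1$ the weighted space $e^{\delta s}\mathcal C^{2,\alpha}$ with the conjugation removed forces $\hat w_\infty$ in those modes to decay like $e^{-|\delta| s}$ on both ends; since $|\delta|>1$, such a function cannot be a nonzero bounded solution of a second-order ODE whose bounded solutions are all $O(e^{-s})$ but not $O(e^{-|\delta|s})$ --- again a contradiction. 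In all cases $\hat w_\infty\equiv 0$, contradicting $|\hat w_\infty(0)|\geq 1/2$.

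The main obstacle is the bookkeeping of \emph{where} the supremum of $\tilde w_k$ is attained relative to the necks: because the necks are at mutual distance $\sim -2\log\tau_k \to \infty$, the rescaled limit ``sees'' at most one neck (or none, if $s_k$ drifts into the spherical region where the potential vanishes in the limit), which is precisely what Lemma~\ref{le:2.4} guarantees; but one must still rule out that the contradiction sequence loses all its mass to infinity, i.e. that $|\tilde w_k(s_k)|$ stays bounded away from $0$. This requires a uniform gradient/Harnack-type control: since $\tilde w_k$ has $\mathcal C^{2,\alpha}$ norm exactly $1$ and satisfies a uniformly elliptic equation with uniformly bounded coefficients on each unit cylinder $[s,s+1]\times S^1$, Schauder estimates give that if $\sup_{[s-1,s+1]\times S^1}|\tilde w_k|$ is small then so is the full $\mathcal C^{2,\alpha}$ norm there; hence the norm-one condition forces $|\tilde w_k|$ itself to be bounded below by a fixed constant somewhere, and that somewhere is the $s_k$ we recenter at. Once that is secured the rest is the ODE analysis above, which is routine given Proposition~\ref{pr:deltaj} and the explicit Jacobi fields.
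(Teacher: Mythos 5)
Your overall strategy is the same as the paper's: argue by contradiction, recenter at a point where the weighted function stays bounded below, use Lemma~\ref{le:2.4} to identify the limit potential as either $0$ or $2\cosh^{-2}(s-s_0)$, and then kill the nontrivial limit mode by mode in the Fourier decomposition. Conjugating by $e^{\delta s}$ instead of rescaling by $e^{-\delta s_n}$ at the recentering point is a cosmetic difference, and your Schauder argument ensuring the function itself (not just its derivatives) is bounded below somewhere is exactly the paper's opening reduction to an $L^\infty$ estimate. The treatment of the flat limit potential is fine.

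The gap is in the mode analysis for the soliton potential, which is the heart of the matter. For $|n|\ge 2$ you dismiss bounded solutions on the grounds that ``the potential only helps''; this is backwards: a positive decaying potential is exactly what can produce a bound state (indeed $\partial_s^2-1+2\cosh^{-2}s$ has the decaying solution $(\cosh s)^{-1}$, which is precisely why the modes $n=\pm1$ are delicate and why the hypothesis $|\delta|>1$ is needed at all). To rule out a solution decaying at both ends when $|n|>|\delta|$ you need an actual argument, e.g.\ the maximum principle using $2\cosh^{-2}s-n^2\le 2-n^2<0$, which is what the paper does; for $1<|n|<|\delta|$ one instead compares the forced decay $e^{\delta s}$ at $-\infty$ with the slowest possible decay $e^{|n|s}$ of a nontrivial solution. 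Moreover your description of the low modes is garbled: the bounded solutions are $\tanh s$ for $n=0$ and $(\cosh s)^{-1}$ for $n=\pm1$ (the second solutions being $1-s\tanh s$ and $s(\cosh s)^{-1}+\sinh s$), and the weight $e^{\delta s}$ does not force decay ``on both ends'' --- it is one-sided, imposing decay only as $s\to-\infty$ when $\delta>0$ (growth being allowed at $+\infty$). The correct argument for $\delta>1$ is that the bound $Ce^{\delta s}$ at $-\infty$ first kills the coefficient of the solution that grows there, and then kills the remaining coefficient because no nontrivial solution tends to $0$ at $-\infty$ in the $n=0$ mode, while in the $n=\pm1$ modes none decays faster than $e^{s}$, whereas $e^{\delta s}$ with $\delta>1$ does (the case $\delta<-1$ is symmetric). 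With these repairs your proof coincides with the paper's.
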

\begin{proof}
Observe that, thanks to Schauder's elliptic estimates, it is enough to prove that
\[
\|æe^{- \delta s} \, w \|_{L^\infty (\mathbb R \times S^1) }æ\leq C\, \| e^{-\delta s} \, \phi_
\tau^2 \, J_\tau \, w \|_{L^\infty (\mathbb R \times S^1) } ,
\]
provided $\tau$ is close enough to $0$. The proof of this estimate is by
contradiction. Assume that, for some sequence $\tau_n$ tending to $0$ there
exists a sequence of functions $w_n$ such that
\[
\|æe^{- \delta s} \, w_n \|_{L^\infty (\mathbb R \times S^1) } =1 \qquad \mbox{and}
\qquad \lim_{j\rightarrow \infty} \| e^{-\delta s} \, \phi_{\tau_n}^2 \, J_{\tau_n} \,
w_n \|_{L^\infty (\mathbb R \times S^1) } = 0 .
\]
Pick a point $s_n \in \mathbb R$ such that $\|æe^{- \delta s_n} \, w_n (s_n , \cdot)
\|_{L^\infty (S^1) } \geq 1/2$ and define the rescaled sequence
\[
\bar w_n ( s , \theta) : = e^{-\delta \, s_n } \, w ( s + s_n , \theta) .
\]
We still have
\[
\|æe^{- \delta s} \, \bar w_n \|_{L^\infty (\mathbb R \times S^1) } =1 \qquad
\mbox{and} \qquad \lim_{j\rightarrow \infty} \| e^{-\delta s} \, \bar L_{n} \bar w_n
\|_{L^\infty (\mathbb R \times S^1) } = 0 ,
\]
where by definition $\bar L_{n}$ is defined by
\[
\bar L_{n} : = \partial_s^2 + \partial_\theta^2 + 2 \, \left( \phi_{\tau_n}^2 +
\frac{\tau_n^2}{\phi_{\tau_n}^2}\right) (\cdot + s_n) .
\]

Elliptic estimates and Ascoli-Arzela's theorem allows one to extract some
subsequence and pass to the limit as $n$ tends to $\infty$ to get a function
$w_\infty$ which is a nontrivial solution to either
\begin{equation}
(\partial_s^2 + \partial_\theta^2) \, w_\infty = 0 ,
\label{eq:2.2}
\end{equation}
or
\begin{equation}
\left( \partial_s^2 + \partial_\theta^2 + \frac{2}{\cosh^2 (\cdot + s_*)} \right) \, w_\infty
= 0 ,
\label{eq:2.3}
\end{equation}
according to the different cases described in Lemma~\ref{le:2.4}. To simplify the
notations, we assume that $s_* =0$, straightforward modifications are needed to
handle the general case. Observe that we also have
\[
\| e^{-\delta s} \, w_\infty \|_{L^\infty (\mathbb R \times S^1)} \leq 1,
\]
and $\| w_\infty (0, \cdot)\|_{L^\infty (S^1)} \geq 1/2$.

We decompose $w_\infty$ as
\[
w_\infty (s, \theta) = \sum_{j \in \mathbb Z} w^{(j)} (s) \, e^{ij\theta} .
\]
It is easy to prove that for any solution of (\ref{eq:2.2}), $w^{(j)}$ is a linear
combination of $e^{\pm js}$ and none is bounded by a constant times $e^{\delta s}
$ unless $\delta$ is an integer (which we have assumed not to be the case).

Similarly, if $w_\infty$ is a solution of (\ref{eq:2.3}), we find that $w^{(j)}$ is a
solution of
\begin{equation}
\left(\partial_s^2 - j^2 + \frac{2}{æ\cosh^2 s} \right) \, w^{(j)} =0 ,
\label{eq:2.3q}
\end{equation}
and is either asymptotic to $e^{js}$ or to $e^{-js}$ at $\pm \infty$. Inspection of the
behavior of (\ref{eq:2.3q}) at infinity then implies that there is no such solution which is bounded
by a constant times $e^{\delta s}$ if $|j| < |\delta | $. When $|j| > |\delta |$,
inspection of the behavior of (\ref{eq:2.3q}) at infinity implies that any such solution is necessarily
bounded and the maximum principle then implies that this solution is identically
$0$ (observe that in this case $j^2 >2$ since $|j| > |\delta | > 1$ and hence the
potential in (\ref{eq:2.3q}) is negative).

When $j=0$, all solutions of
\[
\left(\partial_s^2 + \frac{2}{æ\cosh^2 s} \right) \, w^{(0)} =0 ,
\]
are linear combination of the functions $\tanh s$ and $1-s \, \tanh s $ and none is
bounded by a constant times $e^{\delta s}$ unless $\delta =0$ (which is not the
case).

Finally, and this is the reason why we had to choose $|\delta | >1$, when $j=1$, all
solutions of
\[
\left(\partial_s^2 - 1 + \frac{2}{æ\cosh^2 s} \right) \, w^{(1)} =0 ,
\]
are linear combination of the functions $(\cosh s)^{-1}$ and $s \, ( \cosh s)^{-1}+
\sinh s$ and none is bounded by a constant times $e^{\delta s}$ unless $|\delta|
\leq 1$ (which is contrary to our assumption). Again we have reached a
contradiction. Having reached a contradiction in all cases, the proof of the
Proposition is complete.
\end{proof}

Thanks to the previous result, we can now describe the mapping properties of $
\phi^2 \, J_\tau$ for the range of weights $\delta$ of interest for our problem.
\begin{proposition}
Assume that $|\delta | >1$, $\delta \notin \mathbb Z$ is fixed. Then, there exist $
\tau_\delta >0$ and $C >0$, only depending on $\delta$, such that, for all $\tau \in
(0, \tau_\delta)$, the operator $L_\delta$ is an isomorphism the norm of whose
inverse is bounded independently of $\tau$.
\label{pr:linres100}
\end{proposition}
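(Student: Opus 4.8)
The plan is to upgrade the a priori estimate of Proposition~\ref{pr:linres10} to a full isomorphism statement, using a standard continuity/deformation argument combined with the already-established a priori bound to guarantee uniformity in $\tau$. Since Proposition~\ref{pr:linres10} already provides injectivity of $L_\delta$ with a $\tau$-independent bound on the inverse (on its range), the only remaining point is surjectivity. First I would fix $\tau \in (0,\tau_\delta)$ and treat $\tau$ as frozen, so that $L_\delta$ is a second-order elliptic operator with smooth, bounded, periodic coefficients on the infinite cylinder $\mathbb R \times S^1$; the weight $\delta$ is non-integral and $|\delta|>1$, hence it avoids all indicial roots $\pm \delta_j(\tau)$ of $J_\tau$ for $j=0,1$ (which vanish by Proposition~\ref{pr:deltaj}) and, after possibly shrinking $\tau_\delta$, also avoids $\pm\delta_j(\tau)$ for $j\geq 2$ since those roots are close to $\pm\sqrt{j^2-2}$ which are bounded away from $\delta$ for $|\delta|$ in a fixed bounded interval not containing such values. (If $|\delta|$ happens to equal some $\sqrt{j^2-2}$ one simply perturbs $\delta$; the statement is for fixed generic $\delta$.)

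The core surjectivity argument proceeds by Fourier decomposition in $\theta$. Writing $f = \sum_{j} f^{(j)}(s)\,e^{ij\theta}$ and seeking $w = \sum_j w^{(j)}(s)\,e^{ij\theta}$, the equation $L_\delta w = f$ decouples into the family of ODEs $\tfrac12\bigl(\partial_s^2 - j^2 + 2(\phi^2 + \tau^2/\phi^2)\bigr)\,w^{(j)} = f^{(j)}$ on $\mathbb R$. For each $j$ I would solve this ODE in the weighted space $e^{\delta s}\mathcal C^{2,\alpha}(\mathbb R)$ by a variation-of-parameters/Green's function construction: since $\delta$ is not an indicial root of $J_{\tau,j}$, there is a unique solution of the homogeneous equation that decays like $e^{-\bar\delta_j s}$ at $+\infty$ and one growing like $e^{\bar\delta_j s}$, and using these to build the bounded Green's function one obtains a particular solution with the correct weighted decay, with an estimate that a priori depends on $\tau$ and $j$. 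Summing the Fourier modes and invoking elliptic regularity (Schauder on each annulus $[s,s+1]\times S^1$, exactly as in Lemma~3.1) gives $w \in e^{\delta s}\mathcal C^{2,\alpha}(\mathbb R\times S^1)$ with $L_\delta w = f$; here one needs to check that the high modes ($|j|>|\delta|$, where the potential $-j^2 + 2(\phi^2+\tau^2/\phi^2)$ is negative by the bound $2(\phi^2+\tau^2/\phi^2)\leq 2+4\tau < j^2$) contribute a convergent, controlled series — this follows from the maximum principle giving exponential decay uniform in $j$.

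Once surjectivity is established for each fixed $\tau$, the a priori estimate of Proposition~\ref{pr:linres10} immediately upgrades the bound on $L_\delta^{-1}$ to be uniform in $\tau \in (0,\tau_\delta)$: given $f$, set $w = L_\delta^{-1} f$, then $\|e^{-\delta s} w\|_{\mathcal C^{2,\alpha}} \leq C \|e^{-\delta s} L_\delta w\|_{\mathcal C^{0,\alpha}} = C\|e^{-\delta s} f\|_{\mathcal C^{0,\alpha}}$ with $C$ the $\tau$-independent constant from that proposition. This gives both that $L_\delta$ is injective and that its inverse is bounded independently of $\tau$, completing the proof. The main obstacle, as I see it, is the surjectivity for fixed $\tau$: one must carefully handle the separate ODEs for the two exceptional modes $j=0$ and $j=1$, where the relevant homogeneous solutions are (translates of) the Jacobi fields $\dot\phi/\phi$ and $\phi - \tau/\phi$ together with their second, non-periodic companion solutions, and verify that for $|\delta|>1$ non-integral the weighted solvability genuinely holds in each of these modes despite the vanishing indicial roots $\delta_0(\tau) = \delta_1(\tau) = 0$ — this is precisely why the hypothesis $|\delta|>1$ (not merely $\delta \neq 0$) is needed, mirroring the last step of the proof of Proposition~\ref{pr:linres10}. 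A secondary technical point is the uniform-in-$j$ control of the tail of the Fourier series, which is where one uses that $2(\phi^2 + \tau^2/\phi^2)$ is bounded above by $2 + 4\tau$ uniformly.
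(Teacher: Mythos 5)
Your overall skeleton matches the paper's: injectivity and the $\tau$-uniform bound are exactly what Proposition~\ref{pr:linres10} gives, so the whole burden is surjectivity for each fixed $\tau$, and you propose to get it by Fourier decomposition in $\theta$. But the execution has two genuine gaps. First, your Green's function construction (``the unique solution of the homogeneous equation that decays like $e^{-\bar\delta_j s}$ at $+\infty$ and one growing like $e^{\bar\delta_j s}$'') simply does not exist for the modes $j=0,\pm 1$: there $\delta_0(\tau)=\delta_1(\tau)=0$ and the homogeneous solutions are a periodic Jacobi field together with an at most linearly growing companion, so there is no exponentially decaying solution to build a Green's function from. You correctly flag these modes as ``the main obstacle'' and name the right objects, but you never carry out the argument, and this is precisely the step the paper actually does: reduce first to $f$ with compact support, integrate the mode-$0$ and mode-$\pm 1$ ODEs from $-\infty$ (so $w_j\equiv 0$ near $-\infty$), observe that beyond the support $w_j$ is a combination of the periodic solution and an at most linearly growing one (Appendix~1), hence lies in $e^{\delta s}\mathcal C^{2,\alpha}$ because $\delta>1>0$, then invoke Proposition~\ref{pr:linres10} and finish with an exhaustion argument for general $f$. (Also, $|\delta|>1$ is not what makes these two modes solvable for fixed $\tau$ --- any $\delta\neq 0$ would do there; it is what makes the uniform a priori estimate true.)

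Second, for the remaining modes your barrier/maximum-principle argument only works when $\delta^2<j^2-2-4\tau$, i.e.\ for every $j\geq 2$ only when $|\delta|<\sqrt{2}$; for $|\delta|>\sqrt 2$ the finitely many intermediate modes (e.g.\ $j=2$) require knowing that $\delta\neq\delta_j(\tau)$, and your parenthetical claim that $\delta_j(\tau)$ is ``close to $\sqrt{j^2-2}$'' is not established anywhere in the paper (Proposition~\ref{pr:deltaj} gives only the lower bound). The paper's detailed self-contained proof deliberately restricts to $\delta\in(1,\sqrt 2)$ so that this issue never arises, and it handles the high modes $j\neq 0,\pm1$ all at once by the coercivity inequality (\ref{coerc}) in $H^1(\mathbb R\times S^1)$ --- which also sidesteps your secondary problem of summing mode-by-mode solutions back into a $\mathcal C^{2,\alpha}$ function --- followed by a differential inequality on $\int_{S^1}\bar w^2\,d\theta$ to get the $e^{\delta s}$ decay; for general $|\delta|>1$ it instead solves Dirichlet problems on finite cylinders $[-s_0,s_0]\times S^1$ and passes to the limit using the uniform weighted estimate, again without ever needing the exact values of the indicial roots. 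So while your strategy could be completed, as written it is incomplete exactly at the two points where the proof has real content.
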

\begin{proof}
Injectivity follows at once from Proposition~\ref{pr:linres10}. As far as surjectivity is
concerned, we give here a simple self contained proof in the case where $\delta
\in (1, \sqrt 2)$ (or $\delta \in (-\sqrt 2, -1)$). We will then sketch a general proof.

To fix the ideas, let us assume that $\delta \in (1, \sqrt 2)$. We first assume that $f
\in \mathcal C^{0, \alpha} (\mathbb R\times S^1)$ has
compact support and we decompose it as
\[
f (s, \theta) = f_0(s) + f_{\pm 1} (s) \, e^{\pm i \, \theta} + \bar f (s, \theta),
\]
where, by definition,
\[
\bar f : = \sum_{j \neq 0, \pm 1}æf_j (s) \, e^{ij\theta}
\]

Observe that, if we restrict our attention to functions $\bar w$ whose Fourier
decomposition in the $\theta$ variable is of the form
\[
\bar w (s, \theta) = \sum_{j \neq 0, \pm 1} w_j(s) \, e^{ij\theta} ,
\]
we have
\begin{equation}
\int_{\mathbb R\times S^1} \left( |\partial_s \bar w|^2 + |\partial_\theta \bar w|^2 - 2 \,
\left( \phi^2+ \frac{\tau^2}{\phi^2}\right) \, \bar w^2 \right) \, ds \, d\theta \geq (2 -4
\tau) \, \int_{\mathbb R \times S^1} \bar w^2 \, ds \, d\theta .
\label{coerc}
\end{equation}
This follows at once from the estimate of the potential involved in the expression of
$J_\tau$ which has been obtained in the proof of Proposition~\ref{pr:deltaj},
namely
\begin{equation}
2 \, \left( \phi^2+ \frac{\tau^2}{\phi^2}\right) \leq 2 + 4 \, \tau,
\label{esza}
\end{equation}
together with the fact that
\[
\int_{\mathbb R\times S^1} |\partial_\theta \bar w|^2 \, ds \, d\theta \geq 4 \,
\int_{\mathbb R \times S^1} \bar w^2 \, ds \, d\theta .
\]

Thus, if we assume that $\sqrt{2} \, \tau < 1$, this inequality implies that we can
solve
\[
\phi^2 \, J_\tau \, \bar w = \bar f,
\]
in $H^1(\mathbb R\times S^1)$. Elliptic estimates then imply that $\bar w \in
\mathcal C^{2, \alpha} (\mathbb R \times S^1)$. Finally, the solvability of
\[
\phi^2 \, J_\tau \, (w_j \, e^{ij\theta} ) = f_j \, e^{ij\theta},
\]
for $j=0, \pm1$, follows easily from integration of the associated second order
ordinary differential equation starting from $-\infty$, hence $w_j \equiv 0$ when $s
$ is close to $-\infty$. Obviously, the function
\[
w : = w_0 + w_{\pm 1}\, e^{\pm i\theta} + \bar w ,
\]
is a solution of the equation $\phi^2 \, J_\tau \, w = f$.

We claim that, provided $\tau$ is chosen small enough, $w \in e^{\delta s} \,
\mathcal C^{2, \alpha} (\mathbb R \times S^1)$. Assuming that the claim is already
proven, the result of Proposition~\ref{pr:linres10} applies and we get
\[
\|æe^{-\delta s} \, w \|_{\mathcal C^{2, \alpha} (\mathbb R \times S^1)} \leq C \, \| e^{-
\delta s} \, f\|_{\mathcal C^{0, \alpha} (\mathbb R \times S^1)}
\]
for any function $f$ having compact support. The general result, when $f$ does not
necessarily have compact support, follows from a standard exhaustion argument.
We choose a sequence of functions $f^{(n)} \in \mathcal C^{0, \alpha} (\mathbb R
\times S^1)$ having compact support converging on compacts to a given function
$f\in e^{\delta s} \, \mathcal C^{0, \alpha} (\mathbb R \times S^1)$. Moreover,
without loss of generality, we can assume that
\[
\|æe^{-\delta s} \, f^{(n)} \|_{\mathcal C^{0, \alpha} (\mathbb R \times S^1)} \leq C \, \|
e^{- \delta s} \, f \|_{\mathcal C^{0, \alpha} (\mathbb R \times S^1)},
\]
for some constant $C >0$ independent of $n\geq 0$. Thanks to the above, we
have a sequence of solutions of $\phi^2 \, J_\tau \, w^{(n)} = f^{(n)}$ satisfying
\[
\|æe^{-\delta s} \, w^{(n)} \|_{\mathcal C^{2, \alpha} (\mathbb R \times S^1)} \leq C \, \|
e^{- \delta s} \, f \|_{\mathcal C^{0, \alpha} (\mathbb R \times S^1)} .
\]
Extracting some subsequence and passing to the limit, one gets the existence of
$w \in e^{\delta s} \, \mathcal C^{0, \alpha} (\mathbb R \times S^1)$, a solution of $
\phi^2 \, J_\tau \, w = f$ satisfying
\[
\|æe^{-\delta s} \, w \|_{\mathcal C^{0, \alpha} (\mathbb R \times S^1)} \leq C \, \| e^{-
\delta s} \, f \|_{\mathcal C^{0, \alpha} (\mathbb R \times S^1)}.
\]
The result then follows from Schauder's estimates.

It remains to prove the claim. We keep the notations introduced above. We first
prove that $\bar w$ tends to $0$ exponentially fast at infinity. Indeed, away from
the support of $\bar f$, we can multiply the equation $ \phi^2 \, J_\tau \, \bar w =
\bar f$ by $\bar w$ and integrate over $S^1$ to get
\[
\frac{1}{2} \, \frac{d^2}{ds^2} \left( \int_{S^1} \bar w^2 \, d\theta \right) = \int_{S^1}
\left( |\partial_s \bar w|^2 + |æ\partial_\theta \bar w|^2 - \left( \phi^2 +
\frac{\tau^2}{\phi^2}\right) \, \bar w^2 \right) \, d\theta \, .
\]
But
\[
\int_{S^1} |æ\partial_\theta \bar w|^2 \, d\theta \geq 2 \, \int_{S^1} \bar w^2 \, d\theta ,
\]
and we conclude from (\ref{esza}) that
\[
\frac{d^2}{ds^2} \, \left( \int_{S^1} \bar w^2 \, d\theta \right) \geq 4 \, ( 1- 2\, \tau) \,
\int_{S^1} \bar w^2 \, d\theta \, .
\]
Since we have assumed that $\delta \in (1,\sqrt{2})$, we can assume that $\tau >0$
is small enough so that
\[
\delta^2 \leq 2 \, ( 1- 2 \tau) ,
\]
and using the fact that that $\bar w$ is bounded, we conclude that there exists
$C >0$, such that
\[
\int_{S^1} \bar w^2 \, d\theta \leq C \, (\cosh s)^{-2\delta} .
\]
This shows that $\bar w \in (\cosh s)^{-\delta} \, L^2 (\mathbb R \times S^1)$ and, by
elliptic regularity, this implies that $\bar w \in (\cosh s)^{-\delta} \,\mathcal C^{2,
\alpha} (\mathbb R \times S^1)$.

Finally, it remains to check that the functions $w_0$ and $w_{\pm 1}$ are at most
linearly growing at $+\infty$. This follows at once from the fact that, for $s$ large
enough, these functions are solutions of second order homogeneous ordinary
differential equations
\[
\left(\partial_s^2 - j^2 + 2 \, \left( \phi^2 + \frac{\tau^2}{\phi^2} \right) \right) w_j =0 .
\]
For $j=0,1$, this ordinary differential equation whose potential is periodic, has one
solution which is periodic (see the proof of Proposition~\ref{pr:deltaj}) and standard
result imply that the other linearly independent solution of this ordinary differential
equation is at most linearly growing (see Appendix 1). In particular, $w_j \in
e^{\delta s} \, \mathcal C^{2, \alpha} (\mathbb R)$ and this completes the proof of
the claim.

We briefly explain how the proof of the general result can be obtained. The idea is
to solve the equation $\phi^2 \, J_\tau \, \bar w_{s_0} =\bar f$ in $[-s_0, s_0] \times
S^1$ with $0$ boundary conditions, this can be done using the coercivity
inequality (\ref{coerc}). Then, the proof of Proposition~\ref{pr:linres10} can be
adapted to prove that
\[
\|æe^{-\delta s} \, \bar w_{s_0} \|_{\mathcal C^{2, \alpha} ([-s_0, s_0] \times S^1)} \leq
C \, \| e^{- \delta s} \, \bar f \|_{\mathcal C^{0, \alpha} (\mathbb R \times S^1)} ,
\]
for some constant $C>0$ independent of $s_0 >1$ (observe that we use the fact
that the Fourier decomposition of the function $\bar w$ in the $\theta$ variable
does not have any component over $1$ and $e^{\pm i\theta}$). It then remains to
pass to the limit in the sequence $w_{s_0}$ as $s_0$ tends to $\infty$ to prove the
existence of $\bar w$ solution of $\phi^2 \, J_\tau \, \bar w =\bar f$ in $\mathbb R
\times S^1$ which satisfies the correct estimate.
\end{proof}

Using similar arguments, one can give a direct proof of the following general
result, which will not be needed in this paper ~:
\begin{theorem}
Assume that $\delta \neq \pm \delta_j(\tau)$, for all $j \in \mathbb N$, then $L_
\delta$ is an isomorphism.
\label{th:4.1}
\end{theorem}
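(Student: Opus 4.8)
The plan is to deduce the general isomorphism statement of Theorem~\ref{th:4.1} from the special case $|\delta|>1$, $\delta\notin\mathbb Z$ already treated in Proposition~\ref{pr:linres100}, by a deformation (continuity-of-the-index) argument in the weight parameter $\delta$, together with the duality between the weights $\delta$ and $-\delta$. Concretely, one works on the cylinder $\mathbb R\times S^1$ and uses the standard fact (see \cite{Mel},\cite{Maz},\cite{McO}) that a translation-invariant-at-infinity elliptic operator such as $\phi^2 J_\tau$ (here the coefficients are actually periodic in $s$, so one should rather say: an operator periodic at both ends) is Fredholm on $e^{\delta s}\,\mathcal C^{2,\alpha}\to e^{\delta s}\,\mathcal C^{0,\alpha}$ precisely when $\delta$ is not an indicial root, i.e. $\delta\neq\pm\delta_j(\tau)$; and that, as $\delta$ varies in a connected component of $\mathbb R\setminus\{\pm\delta_j(\tau)\}$, the Fredholm index of $L_\delta$ is constant, jumping by (twice) the dimension of the corresponding space of Jacobi fields with that exponential rate each time $\delta$ crosses an indicial root. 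So the scheme is: (1) establish the Fredholm property and local constancy of the index; (2) pin down the index on one component using Proposition~\ref{pr:linres100}; (3) propagate to all components using the symmetry of the indicial roots and a duality/cokernel identification; (4) combine injectivity plus index zero to get the isomorphism.

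First I would set up the Fredholm theory. Writing $L_\delta w=\phi^2 J_\tau w$ and conjugating by $e^{\delta s}$, one gets an operator $\widetilde L_\delta:=e^{-\delta s}L_\delta e^{\delta s}$ acting on unweighted spaces; its Fourier modes in $\theta$ are the ordinary differential operators $\partial_s^2+2\delta\partial_s+\delta^2-j^2+2(\phi^2+\tau^2/\phi^2)$, whose solution spaces are governed by Floquet theory since the potential is periodic in $s$ with period $2s_\tau$. For $\delta$ not an indicial root, each modal operator is invertible between the appropriate exponentially weighted spaces, and a parametrix patching argument (Schauder estimates on each period $[ks_\tau,(k+1)s_\tau]\times S^1$ exactly as in the proof of Proposition~\ref{pr:linres10}) shows $L_\delta$ is Fredholm. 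The relative index formula,
\[
\mathrm{ind}\,L_{\delta'}-\mathrm{ind}\,L_\delta \;=\; \sum_{\delta<\pm\delta_j(\tau)<\delta'} 2\,(\text{mult}),
\]
follows from tracking the bounded solutions gained or lost as $\delta$ sweeps past an indicial root.

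Next I would nail down the index on one reference interval. Proposition~\ref{pr:linres100} says that for $\delta\in(1,\sqrt2)$ (and similarly $\delta\in(-\sqrt2,-1)$) the operator $L_\delta$ is an isomorphism, hence has index $0$ there. By the classical self-adjoint duality — $L_\delta$ with respect to the $L^2(ds\,d\theta)$ pairing has formal adjoint unitarily equivalent to $L_{-\delta}$ — one gets $\mathrm{ind}\,L_{-\delta}=-\mathrm{ind}\,L_\delta$, so the index is an odd function of $\delta$ across the symmetric set $\{\pm\delta_j(\tau)\}$; combined with $\mathrm{ind}\,L_\delta=0$ on $(1,\sqrt2)$ this forces $\mathrm{ind}\,L_\delta=0$ on \emph{every} component not containing an indicial root (the positive and negative contributions from the symmetric indicial roots cancel by the relative index formula). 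Finally, since by Proposition~\ref{pr:linres10}'s proof method (maximum principle / ODE asymptotics, now carried out with the genuine periodic potential rather than its $\tau\to0$ limit) one checks injectivity of $L_\delta$ whenever $\delta\neq\pm\delta_j(\tau)$ — a bounded-by-$e^{\delta s}$ solution would, mode by mode via Floquet analysis, have to decay or be periodic, and then vanish — injectivity plus index zero gives surjectivity, i.e. $L_\delta$ is an isomorphism.

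The main obstacle is the Fredholm/relative-index package itself: making rigorous that $L_\delta$ is Fredholm exactly off the indicial set and that the index is locally constant with the stated jumps. This is standard in the b-calculus / weighted-space literature but genuinely is the technical heart; everything else (the duality $L_\delta^*\simeq L_{-\delta}$, the normalization via Proposition~\ref{pr:linres100}, and the injectivity via Floquet theory and the maximum principle) is comparatively routine. Since the paper explicitly says this theorem ``will not be needed,'' I would present the argument at the level of a guided sketch: state the Fredholm and relative-index facts with references to \cite{Mel},\cite{Maz},\cite{McO}, verify the indicial roots are $\{\pm\delta_j(\tau)\}$ directly from the $\theta$-Fourier decomposition, and then run the short index-counting and injectivity arguments in detail.
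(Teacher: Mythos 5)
Your overall scheme (Fredholm theory off the indicial set, injectivity for every non-indicial $\delta$ via Floquet analysis, the duality ${\rm ind}\, L_{-\delta} = -\,{\rm ind}\, L_\delta$, and normalization through Proposition~\ref{pr:linres100}) is in the spirit of the general theory the paper cites; note that the paper gives no proof at all here, it only refers to \cite{Pac}, \cite{Mel}, \cite{Maz} and indicates that a direct proof can be obtained by repeating the arguments of Proposition~\ref{pr:linres100} (injectivity from the weighted a priori estimate, surjectivity by mode-by-mode integration of the ODEs). However, your index bookkeeping contains a genuine error, and it sits exactly at the step that is supposed to do the work. For the weight $e^{\delta s}$ on the two-ended cylinder, increasing $\delta$ past an indicial root $\delta_j$ \emph{relaxes} the admissible behavior as $s\to +\infty$ but simultaneously \emph{strengthens} the decay requirement as $s \to -\infty$ (since $e^{\delta s}\to 0$ there); the two contributions cancel, so the index does not jump at all: it is identically zero on every chamber, and a Floquet solution with rate $\delta_j$ is never admissible globally on both ends at once. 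The formula you wrote, a jump of $+2\,(\mbox{mult})$ at each crossing, is the correct bookkeeping for a weight such as $(\cosh s)^\delta$, which relaxes both ends simultaneously -- this is precisely why the catenoid operator $\mathcal L_\delta$ of Theorem~\ref{th:5.1} acquires a $6$-dimensional kernel for $\delta \in (1,2)$ instead of being an isomorphism.

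Concretely, if you propagate with your formula from $(0,\delta_2)$ (where Proposition~\ref{pr:linres100} gives index $0$) into $(\delta_2,\delta_3)$, you cross only the single root $+\delta_2$, so no ``cancellation of the positive and negative contributions'' can occur, and you would conclude ${\rm ind}\, L_\delta = 2\, d_2 \neq 0$ there -- contradicting the very statement you are proving; the oddness of the index in $\delta$ does not repair this, since it only relates a chamber to its mirror image. So the propagation of index zero to all chambers is unjustified as written. To fix it you must either prove the no-jump (two-end cancellation) statement for the coupled weight $e^{\delta s}$, or bypass index theory entirely and prove surjectivity directly in every chamber as in the proof of Proposition~\ref{pr:linres100}: decompose in Fourier modes, and for each $j$ solve the ODE by variation of parameters using the pair of Floquet solutions (periodic plus linearly growing for $j = 0, \pm 1$, exponentially decaying/growing with rates $\mp\delta_j$ for $j \geq 2$), choosing the limits of integration at $\pm\infty$ according to the sign of $\delta \mp \delta_j$ so that the resulting solution is bounded by $e^{\delta s}$. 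Your injectivity argument, by contrast, is sound.
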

The proof of this result follows from the general theory developed in \cite{Pac} (see
Theorem 10.2.1 on page 61 and Proposition 12.2.1 on page 81) or in \cite{Mel},
\cite{Maz}, \ldots

In what follows we will restrict our attention to functions which are invariant under
some symmetries. More precisely, we will assume that the functions are invariant
under the action on $S^1$ of the dihedral group ${\rm Dih}^{(2)}_{m+1}$ of
isometries of $\mathbb R^2$ which leave a regular polygon with $m+1$ sides fixed.
The operator associated to $\phi^2 \, J_\tau$, acting on the weighted space of
functions which are invariant under these symmetries, will be denoted by $L_
\delta^\sharp$. This time $L_\delta^\sharp$ is an isomorphism provided $\delta
\neq \pm \delta_j$ for all $j \inæ\mathbb Z$ for which there exist eigenfunctions of $
\partial_\theta^2$ which are invariant under the action of ${\rm Dih}^{(2)}_{m+1}$,
namely, $j \notin m\, \mathbb Z$. Observe that, when $j=1$, there are no such
eigenfunctions and hence, working equivariantly allows us to extend the range in
which the weight parameter $\delta$ can be chosen.

Close inspection of the previous proof shows that the range in which the weight $
\delta$ can be chosen so that the inverse of $L^\sharp_\delta$ remains bounded
as $\tau$ tends to $0$ can be enlarged if we work equivariantly. Even though we
will not use it, we state here the corresponding result for the sake of completeness.
\begin{proposition}
Assume that $\delta \notin m \, \mathbb Z$ is fixed. Then, there exist $\tau_\delta
>0$ and $C >0$, only depending on $\delta$, such that, for all $\tau \in (0, \tau_
\delta)$ and for all $w \in e^{\delta s} \, \mathcal C^{2, \alpha} (\mathbb R \times
S^1)$ which is invariant under the action of ${\rm Dih}^{(2)}_{m+1}$, we have
\[
\|æe^{- \delta s} \, w \|_{\mathcal C^{2, \alpha} (\mathbb R \times S^1) }æ\leq C\, \| e^{-
\delta s} \, L^\sharp_\delta \, w \|_{\mathcal C^{0, \alpha} (\mathbb R \times S^1) } .
\]
\label{pr:5.10}
\end{proposition}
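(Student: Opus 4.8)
The plan is to mimic, essentially word for word, the contradiction argument used in the proof of Proposition~\ref{pr:linres10}; the only difference --- but it is the \emph{crucial} one --- is that a ${\rm Dih}^{(2)}_{m+1}$-invariant function has a restricted set of Fourier modes in the $\theta$ variable, and in particular no component along $e^{\pm i\theta}$. As there, Schauder's estimates reduce the claim to the $L^\infty$ inequality
\[
\|e^{-\delta s}\,w\|_{L^\infty(\mathbb R\times S^1)} \le C\,\|e^{-\delta s}\,\phi_\tau^2\,J_\tau\,w\|_{L^\infty(\mathbb R\times S^1)} ,
\]
valid for all $\tau$ small and all equivariant $w$. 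Suppose this fails: there are $\tau_n\to 0$ and equivariant $w_n$ with $\|e^{-\delta s}w_n\|_{L^\infty}=1$ and $\|e^{-\delta s}\phi_{\tau_n}^2 J_{\tau_n}w_n\|_{L^\infty}\to 0$. Pick $s_n\in\mathbb R$ with $\|e^{-\delta s_n}w_n(s_n,\cdot)\|_{L^\infty(S^1)}\ge 1/2$ and set $\bar w_n(s,\theta):=e^{-\delta s_n}w_n(s+s_n,\theta)$, so that $\|e^{-\delta s}\bar w_n\|_{L^\infty}=1$ and $\|e^{-\delta s}\bar L_n\bar w_n\|_{L^\infty}\to 0$ with $\bar L_n:=\partial_s^2+\partial_\theta^2+2\big(\phi_{\tau_n}^2+\tau_n^2/\phi_{\tau_n}^2\big)(\cdot+s_n)$. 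Since translation in $s$, multiplication by a constant and passage to a limit all preserve ${\rm Dih}^{(2)}_{m+1}$-invariance, each $\bar w_n$ is still equivariant.

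By Lemma~\ref{le:2.4}, elliptic estimates and Ascoli-Arzela, a subsequence of $\bar w_n$ converges on compact sets to an equivariant $w_\infty\not\equiv 0$ (here $\|w_\infty(0,\cdot)\|_{L^\infty(S^1)}\ge 1/2$ is used) satisfying $\|e^{-\delta s}w_\infty\|_{L^\infty}\le 1$ and solving either $(\partial_s^2+\partial_\theta^2)\,w_\infty=0$ or $\big(\partial_s^2+\partial_\theta^2+2(\cosh s)^{-2}\big)\,w_\infty=0$ (after the harmless normalisation $s_*=0$). Write $w_\infty=\sum_j w^{(j)}(s)\,e^{ij\theta}$. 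Equivariance forces $w^{(j)}\equiv 0$ whenever $e^{ij\theta}$ is not ${\rm Dih}^{(2)}_{m+1}$-invariant, and in particular $w^{(\pm1)}\equiv 0$. For the surviving indices one repeats the mode-by-mode discussion of Proposition~\ref{pr:linres10}: the component $w^{(0)}$ is a combination of $1,s$ (flat case) or of $\tanh s$ and $1-s\tanh s$ (cosh case), none of which is $O(e^{\delta s})$ on all of $\mathbb R$ unless $\delta=0$; and for an invariant index $j\ne0$ one has $|j|\ge 2$, so the potential $-j^2+2(\cosh s)^{-2}$ (resp. $-j^2$) is everywhere negative, and inspecting the behaviour of $w^{(j)}$ at $\pm\infty$ together with the maximum principle forces $w^{(j)}\equiv 0$ unless $|\delta|=|j|$. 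Since $\delta\notin m\,\mathbb Z$ rules out $\delta=0$ and $\delta=\pm|j|$ for all the relevant indices, we obtain $w_\infty\equiv 0$, a contradiction.

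The point to stress --- and the reason the restriction on $\delta$ is milder than the condition $|\delta|>1$, $\delta\notin\mathbb Z$ of Proposition~\ref{pr:linres10} --- is that the troublesome mode $j=\pm1$, whose limiting solutions $(\cosh s)^{-1}$ and $s(\cosh s)^{-1}+\sinh s$ are $O(e^{\delta s})$ precisely when $|\delta|\le1$, has disappeared: working equivariantly one only needs $\delta$ to avoid the (integer) indicial weights attached to the modes that actually occur, with no lower bound on $|\delta|$. There is no genuine obstacle in this argument; the only thing requiring a little care is the elementary bookkeeping of which solutions of the two limiting ordinary differential equations are $O(e^{\delta s})$ on the whole line, which is carried out exactly as in the proof of Proposition~\ref{pr:linres10}.
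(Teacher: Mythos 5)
Your argument is correct and is exactly the proof the paper has in mind: it repeats the blow-up/contradiction scheme of Proposition~\ref{pr:linres10}, observing that ${\rm Dih}^{(2)}_{m+1}$-invariance kills the Fourier modes $j=\pm 1$ (the invariant nonzero modes are multiples of $m+1$, hence $|j|\geq 2$, so the negativity of the potential and the mode-by-mode ODE analysis go through without the restriction $|\delta|>1$). The only caveat is inherited from the paper's own statement: the admissible weights should avoid the indicial roots attached to the invariant modes, i.e.\ $\delta\notin(m+1)\,\mathbb Z$ rather than $m\,\mathbb Z$, which is how your phrase ``the relevant indices'' should be read.
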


\subsection{The mean curvature of normal graphs over ${\mathfrak D}_\tau$}

In this section, we investigate the mean curvature of a surface which is a normal
graph over ${\mathfrak D}_\tau$. Given a smooth function $w$ (small enough)
defined on ${\mathfrak D}_\tau$, we consider the surface parameterized by
\[
\tilde X (s,\theta) = X_\tau (s, \theta) + w(s, \theta) \, N_\tau (s, \theta) .
\]
We have the following technical result~:
\begin{lemma}
The mean curvature of the surface parameterized by $\tilde X$ is given by
\[
H(w) = 1 + J_\tau w + \frac{1}{\phi} \, Q_\tau \left( \frac{w}{\phi} \right) ,
\]
where the second order differential nonlinear operator $Q_\tau$ depends on
$\tau$ and satisfies
\[
\begin{array}{rlllll}
\|æQ_\tau ( v_2) - Q_\tau (v_1) \|_{\mathcal C^{0, \alpha} ([s, s+1] \times S^1)} \leq c
\, \left( \|æv_1 \|_{\mathcal C^{2, \alpha} ([s, s+1] \times S^1)} + \|æv_2 \|_{\mathcal
C^{2, \alpha} ([s, s+1] \times S^1)} \right) \\[3mm]
\times \, \|v_2 - v_1 \|_{\mathcal C^{2, \alpha} ([s, s+1] \times S^1)} ,
\end{array}
\]
for some constant $c >0$ independent of $s$ and $\tau \in (0,1)$, for all functions
$v_1$, $v_2$ satisfying $\|æv_i \|_{\mathcal C^{2, \alpha} ([s, s+1 ] \times S^1)}
\leq 1$.
\label{le:mcffncf1}
\end{lemma}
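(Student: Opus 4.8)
The plan is to apply Proposition~\ref{pr:mcng} directly with $\Sigma = {\mathfrak D}_\tau$ and with $N = N_\tau$ the unit normal chosen above, and then to perform a Taylor expansion of the resulting expression in powers of $w$ and its derivatives, keeping track of how the coefficients depend on $\tau$ through $\phi = \phi_\tau$. First I would record, from Proposition~\ref{pr:mcng}, that $H_w$ is obtained by evaluating at $z = w$ the expression built from the parallel metrics $g_z = g_\tau - 2z\,h_\tau + z^2 k_\tau$; since $g_\tau = \phi^2(ds^2+d\theta^2)$, $h_\tau = (\phi^2+\tau)\,ds^2 + (\phi^2-\tau)\,d\theta^2$ and $k_\tau = (\phi+\tau/\phi)^2\,ds^2 + (\phi-\tau/\phi)^2\,d\theta^2$ are all \emph{diagonal} in the $(s,\theta)$ frame with coefficients that are explicit functions of $\phi$ (and $\tau$), the parallel metric $g_w$ is again diagonal, $g_w = a_\tau(w)\,ds^2 + b_\tau(w)\,d\theta^2$, with $a_\tau(w) = \phi^2 - 2w(\phi^2+\tau) + w^2(\phi+\tau/\phi)^2$ and similarly for $b_\tau$; note $a_\tau(w) = (\phi - w(\phi+\tau/\phi))^2 + \text{(lower order)}$, so $g_w$ is uniformly comparable to $g_{cyl}$, multiplied by $\phi^2$, as long as $|w/\phi|$ is small. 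The zeroth-order term of the expansion is $\tfrac12\,{\rm tr}^{g_\tau} h_\tau = 1$, and the term linear in $w$ reproduces $J_\tau w = \tfrac12\phi^{-2}(\partial_s^2+\partial_\theta^2 + 2(\phi^2+\tau^2/\phi^2))w$, exactly as recorded in \S2.2 and \S4.1; everything else is collected into the nonlinear remainder.

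The key point is to extract the $\tau$-dependence cleanly so that the quadratic estimate is uniform. I would substitute $w = \phi\,v$ (equivalently $v = w/\phi$) throughout: since every geometric coefficient above is a function of $\phi$ homogeneous under the appropriate scaling — more precisely $h_\tau/\phi^2$, $k_\tau/\phi^2$ and the conformal factor all involve only $\phi$ and the combination $\tau/\phi^2$ — the operator $\phi\mapsto$ (nonlinear part of $\phi^2 H(\phi v)$) can be written in the form $\tilde Q_\tau(v)$ in which $v$ and its first and second $(s,\theta)$-derivatives appear, with coefficients that are smooth, bounded functions of $\phi^2 + \tau^2/\phi^2$, $\dot\phi/\phi$, and $\tau/\phi^2$; by Lemma~\ref{le:4.33} and equation~(\ref{eq:1.1}) all of these are bounded uniformly in $s$ and in $\tau\in(0,1)$ (here one uses $(\phi^2-\tau)^2 \le \phi^2$, hence $|\dot\phi|\le\phi$, and $2(\phi^2+\tau^2/\phi^2)\le 2+4\tau$). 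Defining $Q_\tau(v) := \phi\big(H(\phi v) - 1 - J_\tau(\phi v)\big)$, so that $H(w) = 1 + J_\tau w + \tfrac1\phi Q_\tau(w/\phi)$ as in the statement, one checks that $Q_\tau$ is a second-order differential operator whose nonlinearity is at least quadratic in $(v,\partial v,\partial^2 v)$ with $\tau$-uniformly bounded coefficients.

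The quadratic (Lipschitz-type) estimate then follows from a standard application of the mean value inequality in Hölder norms on a unit-length cylinder $[s,s+1]\times S^1$. Concretely, $Q_\tau(v_2) - Q_\tau(v_1) = \int_0^1 DQ_\tau\big(v_1 + t(v_2-v_1)\big)\,(v_2-v_1)\,dt$, and because $Q_\tau$ vanishes to second order at $0$ its differential $DQ_\tau(v)$ has coefficients that are $O\big(\|v\|_{\mathcal C^{2,\alpha}}\big)$ (with the implicit constant $\tau$-uniform, by the boundedness just established), while the $\mathcal C^{0,\alpha}$ norm of a product of $\mathcal C^{2,\alpha}$ functions on the fixed-size cylinder is controlled by the product of the norms. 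This yields exactly the claimed bound with a constant $c$ independent of $s$ and of $\tau\in(0,1)$, valid when $\|v_i\|_{\mathcal C^{2,\alpha}([s,s+1]\times S^1)}\le 1$. The main obstacle — and the only place requiring genuine care rather than bookkeeping — is verifying that the coefficients of $Q_\tau$ and of $DQ_\tau$ really are bounded \emph{uniformly as $\tau\to 0$}: this is where one must resist writing the operator in the naive $w$-variable (in which factors like $1/\phi$ blow up on the catenoidal necks where $\phi\sim\tau$) and instead work consistently in the $v = w/\phi$ variable, using $(\phi^2+\tau^2/\phi^2)\le 1+2\tau$ and $\tau/\phi^2 \le C$ on the region of interest to keep every coefficient controlled.
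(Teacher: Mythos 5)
Your overall strategy is the same as the paper's: apply Proposition~\ref{pr:mcng}, observe that $g_w$ is diagonal with the factor $\phi^2$ pulled out, define $Q_\tau(v):=\phi\,(H(\phi v)-1-J_\tau(\phi v))$, check that its coefficients are bounded uniformly in $\tau$, and conclude by the mean value inequality on unit cylinders. However, the one step you yourself single out as the crux --- the uniform boundedness of the coefficients --- is justified by a false inequality. You claim the coefficients are bounded functions of $\phi^2+\tau^2/\phi^2$, $\dot\phi/\phi$ and $\tau/\phi^2$, and that ``$\tau/\phi^2\le C$ on the region of interest''. This is not true: the minimum of $\phi_\tau$ is $\underline\tau=\tfrac12(\sqrt{1+4\tau}-1)\sim\tau$, so on the neck (e.g.\ the cylinder $[-1/2,1/2]\times S^1$, which is certainly among the cylinders $[s,s+1]\times S^1$ covered by the lemma, since the constant must be independent of $s$) one has $\tau/\phi^2\sim 1/\tau\to\infty$. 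For the same reason the normalizations you chose, $h_\tau/\phi^2$ and $k_\tau/\phi^2$, have coefficients $1\pm\tau/\phi^2$ and $(1\pm\tau/\phi^2)^2$ which blow up there, so the assertion that the resulting coefficients of $\tilde Q_\tau$ are uniformly bounded does not follow from what you wrote; as stated, the argument breaks exactly where uniformity in $\tau$ is at stake.

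The repair is the bookkeeping the paper actually does: the uniformly bounded building blocks are $\phi$, $\tau/\phi$ (note $\tau/\underline\tau\to 1$) and $\dot\phi/\phi$ (since $\dot\phi^2\le\phi^2$ by (\ref{eq:1.1})), not $\tau/\phi^2$. Concretely, one writes $\phi^{-2}g_w$ with coefficients $\bigl(1-(\phi\pm\tau/\phi)\,w/\phi\bigr)^2$ (your $a_\tau(w)$ is in fact exactly $\phi^2\bigl(1-(\phi+\tau/\phi)\,w/\phi\bigr)^2$, not merely up to lower order), and normalizes the second fundamental form part as $\phi^{-1}(h_\tau-w\,k_\tau)$, whose coefficients are $(\phi\pm\tau/\phi)\bigl(1-(\phi\pm\tau/\phi)\,w/\phi\bigr)$; both are bounded functions of $w/\phi$ with $\tau$-uniform coefficients. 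The potentially unbounded factor $1\pm\tau/\phi^2$ then only occurs in the terms absorbed into $1+J_\tau w$, while every nonlinear term is a function of $\phi^{-1}\partial_s^k\partial_\theta^\ell w$ ($k+\ell\le 2$) with an overall coefficient bounded by $1/\phi$; converting $\phi^{-1}\partial_s w=\partial_s(w/\phi)+(\dot\phi/\phi)(w/\phi)$ (and similarly for second derivatives, using $\ddot\phi/\phi=1-2(\phi^2-\tau)$) gives precisely the form $\tfrac1\phi\,Q_\tau(w/\phi)$ with uniformly bounded coefficients, after which your mean-value/Lipschitz step goes through as written.
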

\begin{proof}
This follows at once from Proposition~\ref{pr:mcng} together with the fact that the
functions $\phi$, $\frac \tau \phi$ and $\frac{\dot \phi}{\phi}$ as well as the
derivatives of these functions are uniformly bounded as $\tau$ tends to $0$.
Indeed, we have
\[
\begin{array}{rllll}
g_w & = & g- 2\, w \, h + w^2 \, k \\[3mm]
& = &\displaystyle \phi^2 \, \left( \left( 1 - \left(\phi + \frac{\tau}{\phi} \right) \,
\frac{w}{\phi} \right)^2 \, ds^2 + \left( 1 - \left(\phi - \frac{\tau}{\phi} \right) \,
\frac{w}{\phi} \right)^2 \, d\theta^2 \right) .
\end{array}
\]
Hence $\phi^{-2} \, g_w$ has coefficients which are bounded functions of
$\frac{w}{\phi}$. Similarly, the tensor $\phi^{-1} (h- w\, k)$ also has coefficients
which are bounded functions of $\frac{w}{\phi}$. Using this, it is straightforward to
check that the nonlinear terms in $H(w)$ are a function of $\frac{\partial_s^k
\partial_\theta^\ell w}{\phi}$, for $k+\ell=0,1,2$ with coefficients bounded by $
\frac{1}{\phi}$. Finally, observe that
\[
\frac{\partial_s w}{\phi} = \partial_s \left( \frac{w}{\phi} \right) + \frac{\dot \phi}{\phi} \,
\frac{w}{\phi} ,
\]
and hence, any expressions of the form $ \frac{\partial_s w}{\phi}$ can also be
expressed as a linear combination (with coefficients bounded uniformly as $\tau$
tends to $0$) of the function $\frac{w}{\phi}$ and its derivatives. We leave the
details to the reader.
\end{proof}

\subsection{A first fixed point argument}

We assume that we are given $\tau >0$. We define $\bar s \in (- s_{\tau} , 0)$ by the
identity
\[
\phi_{\tau} (\bar s ) = \tau^{3/4} .
\]
Observe that $\bar s$ depends on $\tau$, even though we have chosen not to
make this apparent in the notation. Moreover, it follows from the proof of
Proposition~\ref{pr:4.1} that
\[
\bar s = \frac{1}{4}æ\log \tau +æ\mathcal O (1) ,
\]
as $\tau$ tends to $0$. We define the truncated nodo\"{\i}d $\mathfrak D_{\tau}^+$
to be the image of $ [\bar s, +\infty) \times S^1$ by $X_\tau$. Observe that this
surface has a boundary and, thanks to Proposition~\ref{pr:4.1}, close to this
boundary, it can be parameterized as the vertical graph of the function
\[
x \longmapsto \tau \, \log \left( \frac{2 \, |x|}{\tau} \right) + \mathcal
O_{\mathring {\mathcal C}^\infty}(\tau^{3/2}),
\]
over the annulus $\overline D(0, \tau^{3/4}) - D(0, 2\tau)$. Moreover, $\mathfrak
D_{\tau}^+$ has one end in the upper half space.

In this section, we apply the implicit function theorem (or to be more precise, a fixed
point argument for a contraction mapping) to produce an infinite dimensional family
of constant mean curvature surfaces which are close to $\mathfrak D_{\tau}^+$,
have one boundary which can be described using a function $f : S^1
\longrightarrow \mathbb R$.
\begin{proposition}
\label{pr:4.4}
Assume that we are given $\kappa >0$ large enough (the value of $\kappa$ will be
fixed later on). Then, for all $\tau >0$ small enough and for all functions $f$
invariant under the action of ${\rm Dih}^{(2)}_{m+1}$, satisfying (H1) (observe that
(H2) is automatically satisfied) and
\begin{equation}
\|æf\|_{\mathcal C^{2, \alpha} (S^1)} \leq \kappa \, \tau^{3/2} ,
\label{eq:estimf}
\end{equation}
there exists a constant mean curvature surface $\mathfrak D_{\tau ,f}^+$ with mean
curvature equal to $1$, which is a graph over $\mathfrak D_{\tau}^+$, has one
Delaunay end asymptotic to the end of $\mathfrak D_\tau^+$ and one boundary.
When $f=0$, $\mathfrak D_{\tau , 0 } =\mathfrak D_{\tau}^+$ and, close to its
boundary, the surface $\mathfrak D_{\tau ,f}$ is a {\em vertical graph} over the
annulus
\[
\left\{ x \in \mathbb R^2 \, : \, \frac 1 2 \, \tau^{3/4} \leq |x| \leq \tau^{3/4} \right\} ,
\]
for the function $x \longmapsto U^\upharpoonright_{\tau ,f}( \tau^{-3/4} \, x) $ which
can be expanded as follows
\begin{equation}
U^\upharpoonright_{\tau ,f} (x) = \tau \, \log \left( \frac{2}{\tau^{1/4}} \right) + \tau \,
\log |x| - W^{\rm ins}_f (x) + \bar U^\upharpoonright_{\tau , f} (x) ,
\label{eq:5.199}
\end{equation}
where we recall that $W^{\rm ins}_f$ denotes the bounded harmonic extension of
$f$ in the punctured unit disc and where
\begin{equation}
\| \bar U^\upharpoonright_{\tau, 0} \|_{\mathcal C^{2, \alpha} ( \overline D(0, 1) - D
(0, 1/2) )} \leq C \, \tau^{3/2} .
\label{eq:est-1}
\end{equation}
Moreover, the nonlinear operator
\[
\mathcal C^{2, \alpha} (S^1) \ni f \longmapsto \bar U^\upharpoonright_{\tau ,f} \in
\mathcal C^{2, \alpha} ( \overline D(0, 1) - D (0, 1/2) ),
\]
is Lipschitz and, given $\delta \in (-2,-1)$, we have
\begin{equation}
\| \bar U^\upharpoonright_{\tau ,f'} - \bar U^\upharpoonright_{\tau , f} \|_{\mathcal
C^{2, \alpha} ( \overline D(0, 1) - D (0, 1/2) )} \leq C \, \tau^{(2+\delta)/4} \, \| f' -f
\|_{\mathcal C^{2, \alpha}( S^1)} ,
\label{eq:est-2}
\end{equation}
for some constant $C > 0$, independent of $\kappa$, $\tau$ and $f, f'$. Finally, $
\mathfrak D_{\tau ,f}^+$ is invariant under the action of the dihedral group ${\rm
Dih}^{(2)}_{m+1}$.
\end{proposition}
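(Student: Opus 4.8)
The plan is to set up the equation for the mean curvature of a normal graph over $\mathfrak D_\tau^+$ as a fixed point problem. Using Lemma~\ref{le:mcffncf1}, the surface $\tilde X = X_\tau + w\, N_\tau$ has mean curvature $1$ if and only if
\[
J_\tau w + \frac 1\phi \, Q_\tau\Big(\frac w\phi\Big) = 0,
\]
i.e. $\phi^2 J_\tau w = -\phi\, Q_\tau(w/\phi)$, on $[\bar s,\infty)\times S^1$, together with a boundary condition at $s=\bar s$ prescribing $w$ there in terms of $f$. The idea is to conjugate by $\phi$: writing $w = \phi\, v$, the linear operator becomes (up to lower order periodic terms) the operator $\phi^2 J_\tau$ acting on $v$, whose mapping properties on $e^{\delta s}\mathcal C^{2,\alpha}$ are controlled, uniformly in $\tau$, by Proposition~\ref{pr:linres100} (or its equivariant variant Proposition~\ref{pr:5.10}). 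Here one exploits that $f$ is ${\rm Dih}^{(2)}_{m+1}$-invariant and satisfies (H1), (H2), so the relevant Fourier modes are $j=0$ and $|j|\geq m+1 \geq 2$, allowing a weight $\delta\in(-2,-1)$ for which the inverse of $L_\delta^\sharp$ on the half-cylinder (with appropriate boundary data absorbed) is bounded independently of $\tau$.

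The concrete steps would be: (1) Construct the ``approximate solution'' carrying the boundary data. The bounded harmonic extension $W^{\rm ins}_f$ of $f$ in the punctured disc, transported to the half-cylinder via the conformal change of Section~3, provides a function on $[\bar s,\infty)\times S^1$ that is harmonic for $\partial_s^2+\partial_\theta^2$ and has the right boundary trace; by Lemma~3.1 it decays like $e^{(\bar s - s)}$ (mode $j=0$ contributes nothing by (H1); the slow mode is $|j|=1$, absent by (H2), so in fact it decays like $e^{2(\bar s-s)}$). One checks that $\phi^2 J_\tau$ applied to this extension is small (of size controlled by $\tau^2$ times the potential, using \eqref{esza} and the decay), giving a right-hand side in the target weighted space with norm $O(\tau^{3/2})$ after multiplying by the estimate \eqref{eq:estimf} on $f$. (2) Write $w = $ (the $\phi$-rescaled harmonic extension) $+ \phi\, v$ with $v\in e^{\delta s}\mathcal C^{2,\alpha}$ vanishing at $s=\bar s$, and recast the CMC equation as $v = \mathcal N(v)$ where $\mathcal N = -(L_\delta^\sharp)^{-1}\circ(\text{error term} + \text{nonlinearity})$, using that on the half cylinder with zero boundary data at $\bar s$ the operator is still uniformly invertible on the weighted space (extend by the isomorphism on $\mathbb R\times S^1$ and a reflection/cutoff, or redo the barrier argument of Proposition~\ref{pr:linres10} on the half-line). (3) Apply the contraction mapping theorem in a ball of radius $C\tau^{3/2}$ in $e^{\delta s}\mathcal C^{2,\alpha}$, using the quadratic estimates on $Q_\tau$ from Lemma~\ref{le:mcffncf1} together with the Lipschitz dependence of $W^{\rm ins}_f$ on $f$ (Lemma~3.1 again, which is \emph{linear} in $f$) to get both self-mapping and contraction; this yields $v = \bar v_{\tau,f}$ depending on $f$ in a Lipschitz fashion with the quantitative bound $\tau^{(2+\delta)/4}$ coming from tracking how the weight interacts with the rescaling near the boundary where $\phi\sim\tau^{3/4}$.

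Finally, (4) translate back to geometry: near the boundary, Proposition~\ref{pr:4.1} expresses $\mathfrak D_\tau^+$ as a vertical graph of $\tau\log(2|x|/\tau) + \mathcal O(\tau^{3/2})$ over the annulus $\overline D(0,\tau^{3/4})-D(0,2\tau)$; since the normal perturbation $w$ is small, the perturbed surface $\mathfrak D_{\tau,f}^+$ is still a vertical graph there, and unwinding the change of variables (together with the identification of $W^{\rm ins}_f$ as the inside harmonic extension) produces exactly the expansion \eqref{eq:5.199} with remainder $\bar U^\upharpoonright_{\tau,f}$, the estimates \eqref{eq:est-1}, \eqref{eq:est-2} being direct consequences of the bounds on $v$ obtained in step (3). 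Equivariance is preserved throughout because the whole scheme (approximate solution, linear inverse $L^\sharp_\delta$, nonlinearity $Q_\tau$ which is built from rotationally equivariant geometric quantities) commutes with the ${\rm Dih}^{(2)}_{m+1}$-action, so the fixed point, obtained by iteration from $0$, is automatically invariant.

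I expect the main obstacle to be step (3)/(1): getting the error term and the nonlinearity to land in the weighted space $e^{\delta s}\mathcal C^{0,\alpha}$ with a norm that is \emph{genuinely} $O(\tau^{3/2})$ and uniformly small as $\tau\to 0$, since near $s=\bar s$ one has $\phi\sim\tau^{3/4}$ and the factor $1/\phi$ multiplying $Q_\tau$ in Lemma~\ref{le:mcffncf1} is large — one must check carefully that the competing smallness from the weight $e^{\delta\bar s}\sim\tau^{\delta/4}$ and from the quadratic nature of $Q_\tau$ beats this, which is precisely why the hypothesis \eqref{eq:estimf} has the exponent $3/2$ and why $\delta$ must be taken in $(-2,-1)$; the bookkeeping of these competing powers of $\tau$, rather than any conceptual difficulty, is the delicate part.
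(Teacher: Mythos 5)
Your plan follows the paper's own proof in all essentials: carry the boundary data by the harmonic extension of $f$ on the half-cylinder, invoke the uniform weighted invertibility of $\phi^2\,J_\tau$ (in its equivariant form, with $\delta\in(-2,-1)$), solve by a contraction mapping using the quadratic estimates on $Q_\tau$, and then change variables $r=\phi(s)$ near $s=\bar s$ to read off the vertical-graph expansion \eqref{eq:5.199} with the remainder estimates \eqref{eq:est-1} and \eqref{eq:est-2}. The deviations are minor and inessential: the substitution $w=\phi\,v$ is unnecessary (and, as stated, not correct — $\phi^2 J_\tau$ already acts on $w$ itself, no conjugation is involved), and where you impose a Dirichlet condition at $s=\bar s$ and solve on the half-cylinder, the paper instead extends the right-hand side by zero through an extension operator and solves on all of $\mathbb R\times S^1$, after first replacing $N_\tau$ by the vertical field $-e_3$ near the boundary (whence the extra small term $\ell_\tau$) rather than converting the normal graph to a vertical graph only at the very end.
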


Before we proceed with the proof of the Proposition, one comment is due. Observe
that we have chosen to describe the surface near its boundary as the graph of the
function $x \longmapsto U^\upharpoonright_{\tau ,f} ( \tau^{-3/4} \, x)$ and as a
consequence the function $U^\upharpoonright_{\tau ,f}$ is defined over the
annulus $ \overline D(0, 1) - D (0, 1/2)$. Alternatively, we could have chosen not to
scale the coordinates and have a function defined over the annulus $\overline D(0,
\tau^{3/4}) - D (0, \tau^{3/4}/2)$, which would be more natural. However, with this
latter choice, we would have to consider in (\ref{eq:est-1}) and (\ref{eq:est-2}),
function spaces where partial derivatives are taken with respect to the vector fields
$r\, \partial_r$ and $\partial_\theta$ to evaluate the norm of these functions, while
with the former choice, the H\"older spaces are the usual ones.

\begin{proof}
The proof of this result is fairly technical but by now standard. To begin with, in the
annular region which is the image of $( \bar s-2, \bar s+2) \times S^1$ by $X_{\tau}
$, we modify the unit vector field $N_{\tau}$ into $\bar N_{\tau}$ in such a way that
$\bar N_{\tau}$ is equal to $- e_3$, the downward pointing unit normal vector field
on the image of $( \bar s - 1, \bar s + 1) \times S^1$ by $X_{\tau}$. Using
Proposition~\ref{pr:2.2}, direct estimates imply that the expression of the mean
curvature given in Lemma~\ref{le:mcffncf1} has to be altered into
\[
\bar H(w) = 1 + J_{\tau} w + \frac{1}{\phi^2} \, \ell_{\tau} \, w + \frac{1}{\phi} \, \bar
Q_{\tau} \left( \frac{w}{\phi} \right) ,
\]
where $\bar Q_{\tau}$ enjoys properties which are similar to the properties enjoyed
by $Q_{\tau}$ and where $\ell_{\tau}$ is a linear second order partial differential
operator in $\partial_s$ and $\partial_\theta$ whose coefficients are smooth, have
support in $[\bar s-2, \bar s+2] \times S^1$ and are bounded (in $\mathcal C^\infty$
topology) by a constant (independent of $\tau$) times $\tau^{1/2}$. This estimate
comes from the fact that
\[
N \cdot (- e_3) = 1 + \mathcal O (\tau^{1/2}) ,
\]
on the image of $[\bar s-2, \bar s+2] \times S^1$ by $X_\tau$.

We assume that we are given a function $f \in \mathcal C^{2, \alpha } (S^1)$
satisfying both (H1), (H2) and (\ref{eq:estimf}) and we denote by $F$ the harmonic
extension of $f$ in $(\bar s , \infty) \times S^1$.

Given these data, we would like to solve the nonlinear equation
\begin{equation}
\phi^2 \, J_{\tau} ( F+ w) + \ell_{\tau} \, (F+w) + \phi \, \bar Q_{\tau} \left(
\frac{F+w}{\phi} \right) = 0 ,
\label{eq:mce1}
\end{equation}
in $(\bar s , \infty) \times S^1$. Provided $w$ is small enough and decays
exponentially at infinity, this will then provide constant mean curvature
surfaces which are close to a half nodo\"{\i}d ${\mathfrak D}_{\tau}^+$.

We choose
\[
\mathcal E_\tau : \mathcal C^{0, \alpha} ([\bar s ,\infty) \times S^1)æ\longrightarrow
\mathcal C^{0, \alpha} (\mathbb R \times S^1) ,
\]
an extension operator such that
\[
\left\{ \begin{array}{rlllll}
\mathcal E_\tau (\psi) & = & \psi \qquad & \mbox{in}æ\qquad [\bar s , \infty) \times S^1
\\[3mm]
\mathcal E_\tau (\psi) & = & 0 \qquad & \mbox{in} \qquad (-\infty, \bar s - 1] \times
S^1 ,
\end{array}
\right.
\]
and
\[
\| \mathcal E_\tau (\psi) \|_{\mathcal C^{0, \alpha} ([\bar s - 1, \bar s +1] \times S^1) }
\leq C \, \| \psi \|_{\mathcal C^{0, \alpha} ([\bar s , \bar s + 1] \times S^1) } .
\]
We rewrite (\ref{eq:mce1}) as
\begin{equation}
\phi^2 \, J_{\tau} w = - \mathcal E_\tau \, \left( \phi^2 \, J_{\tau} (F+w) + \ell_{\tau}
\, F + \phi \, \bar Q_{\tau} \left( \frac{F+w}{\phi} \right) \right) ,
\label{eq:mce2}
\end{equation}
where, this time, the function $w$ is defined on all $\mathbb R \times S^1$ (to be
more precise, one should say that, on the right hand side, we consider the
restriction of $w$ to $[\bar s , \infty) \times S^1$).

The following estimates follow easily if one uses the fact that
\[
\frac{C}{\tau} \, \cosh s \leq \phi \leq C \, \tau \, \cosh s \qquad \mbox{in} \qquad
(- s_\tau , s_\tau) ,
\]
for some $C >1$, and also that $\phi$ is periodic of period $2 \, s_\tau$. We
assume that $\delta \in (-2,-1)$ is fixed. It is easy to check that there exists a
constant $c >0$ (independent of $\kappa$) and a constant $c_\kappa >0$
(depending on $\kappa$) such that
\[
\left\|æ e^{-\delta s} \, \mathcal E_\tau \left( \left( \phi^2 + \frac{\tau^2}{\phi^2}\right) \, F
\right) \right\|_{\mathcal C^{0, \alpha} (\mathbb R\times S^1)} \leq c \, \tau^{1/2} \, \| f
\|_{\mathcal C^{2, \alpha} (S^1)} \, ,
\]
\[
\left\|æ e^{-\delta s} \, \mathcal E_\tau \left(\ell_\tau \, (F + w) \right) \right\|_{\mathcal
C^{0, \alpha} (\mathbb R\times S^1)} \leq c \, \tau^{1/2} \, \left( \| e^{-\delta s}
\, w \|_{\mathcal C^{2, \alpha}_\delta (\mathbb R \times S^1)} + \tau^{- \delta/4} \, \| f
\|_{\mathcal C^{2, \alpha} (S^1)} \right) ,
\]
and we also have
\[
\begin{array}{llll}
\displaystyle \left\|æ e^{-\delta s} \, \mathcal E_\tau \left( \phi \, Q_\tau \left( \frac{w'+
F '}{\phi} \right) - \phi \, Q_\tau \left( \frac{w+ F}{\phi} \right) \right) \right\|_{\mathcal
C^{0, \alpha} (\mathbb R\times S^1)} \\[3mm]
\qquad \qquad \leq c_\kappa \, \left( \tau^{3/4} \, \| e^{-\delta s} \, (w' -w) \|_{\mathcal
C^{2, \alpha}_\delta (\mathbb R \times S^1)} + \tau^{(3-\delta)/4} \, \| f' -f\|_{\mathcal
C^{2, \alpha} (S^1)} \right) ,
\end{array}
\]
provided $w$ and $w'$ satisfy
\[
\| e^{-\delta s} \, w \|_{\mathcal C^{2, \alpha} (\mathbb R \times S^1)}æ+ \| e^{-\delta
s} \, w' \|_{\mathcal C^{2, \alpha} (\mathbb R \times S^1)}æ\leq C_\kappa \, \tau^2,
\]
for some fixed constant $C_\kappa >0$. Here $F$ and $F'$ are respectively the
harmonic extensions of the boundary data $f$ and $f'$.

At this stage, we make use of the result of Theorem~\ref{th:4.1} (or more precisely
its equivariant version) to rephrase (\ref{eq:mce2}) as a fixed point problem in
$e^{\delta s} \, \mathcal C^{2, \alpha} (\mathbb R \times S^1)$. The estimates we
have just derived are precisely enough to solve this nonlinear problem using a
fixed point agument for contraction mappings in the ball of radius $C_\kappa \,
\tau^2$ in $e^{\delta s} \, \mathcal C^{2, \alpha} (\mathbb R \times S^1)$, where
$C_\kappa$ is a constant which is fixed large enough. Therefore, for all $\tau >0$
small enough, we find a solution $w$ of (\ref{eq:mce2}) satisfying
\[
\| e^{-\delta s} \, w\|_{\mathcal C^{2, \alpha} (\mathbb R \times S^1)}æ\leq C_\kappa
\, \tau^2 .
\]
In addition, it follows from the above estimates that
\[
\| e^{-\delta s} \, (w' - w ) \|_{ \mathcal C^{2, \alpha} (\mathbb R \times S^1)}æ\leq C_
\kappa \, \tau^{1/2} \, \| f' -f\|_{\mathcal C^{2, \alpha} (S^1)} ,
\]
where $w$ (resp. $w'$) is the solution associated to $f$ (resp. $f'$).

To complete the result, it is enough to change coordinates $r = \phi (s)$ in the
range where $\frac{1}{2} \, \tau^{3/4} \leq r \leq 2 \, \tau^{3/4}$ and $| s - \bar s | \leq
1$. There is no real difficulty in deriving the estimates (\ref{eq:est-1}) and
(\ref{eq:est-2}) which follow from Proposition~\ref{pr:linres10} and the estimate for
$w$. One should be aware that there is some subtlety here, since one should pay
attention to the fact that if we change variables $r = \phi (s)$ for $\frac{1}{2} \,
\tau^{3/4} \leq r \leq 2 \, \tau^{3/4}$ and $|s - \bar s|æ\leq 1$, then $F(s)$ is not equal
to $W^{\rm ins}_f (\phi(s))$ (because $s$ does not correspond to the cylindrical
coordinate $r = e^{t}$ in $\mathbb R^2- \{0\}$) ! In fact we have
\[
F(s) = W^{\rm ins}_f (e^{\bar s - s}) ,
\]
and $\tau^{-3/4} \, r = \phi (s) / \phi (\bar s)$ and is not equal to $e^{\bar s -s}$.
Nevertheless, using the expansion of $\phi$ derived it is easy to check that
\[
\| F (\bar s - \log \phi(s) + \log \phi(\bar s) ) - F ( s )\|_{\mathcal C^{2, \alpha} ( [\bar
s , \bar s+ 2] \times S^1)} \leq c \, \tau^{1/2} \, \| f\|_{\mathcal C^{2, \alpha} (S^1)} ,
\]
for some constant $c >0$ independent of $\tau$.
\end{proof}

\section{The cateno\"{\i}d}

\subsection{Parameterization and notations}

We recall some well known facts about cateno\"{\i}ds in Euclidean space. The
normalized cateno\"{\i}d $\mathfrak C$ is the minimal surface of revolution which is
parameterized by
\[
Y_0(s,\theta) : = (\cosh s \, \cos \theta, \cosh s \, \sin \theta, s) ,
\]
where $(s, \theta) \in \mathbb R \times S^1$. The induced metric on $\mathfrak C$
is given by
\[
g_0 : = (\cosh s)^2 \, (ds^2 + d\theta^2) ,
\]
and it is easy to check that the second fundamental form is given by
\[
h_0 : = ds^2 - d\theta^2 ,
\]
when the unit normal vector field is chosen to be
\[
N_0 : = \frac{1}{\cosh s} \, \left( \cos \theta , \sin \theta , - \sinh s \right) .
\]
In particular, the formula for the induced metric and the second fundamental form
implies that the mean curvature of the surface $\mathfrak C$ is constant equal to $0$.

In the above defined coordinates, the Jacobi operator about the catenoid is given
by
\[
J_0 : = \frac{1}{2 \, \cosh^2 s} \, \left( \partial_s^2 + \partial_\theta^2 +
\frac{2}{\cosh^2 s} \right).
\]

\subsection{Refined asymptotics}

We are interested in the parameterization of the catenoid as a (multivalued)
vertical graph over the horizontal plane. We consider for example the lower part of
the cateno\"{\i}d as the graph, over the complement of the unit disc in the horizontal
plane for the function $u_0$. Namely, $u_0$ is the negative function defined by
\[
u_0 (\cosh s ) = s ,
\]
for all $s \leq 0$. It is easy to check that
\[
s = - \log (2 \, r) + {\mathcal O}_{\mathring{\mathcal C}^\infty} (r^{-2}) ,
\]
and hence the lower end of the cateno\"{\i}d can also be parameterized as a
vertical graph over $\mathbb R^2 - D(0,1)$ by
\[
(r, \theta ) \longmapsto \left( r \cos \theta, r \, \sin \theta, u_0 (r) \right) .
\]
With little work, one proves the~:
\begin{lemma}
The following expansion holds
\[
u_0 (r) = - \log (2 \, r) + {\mathcal O}_{\mathring{\mathcal C}^\infty} (r^{-2}) ,
\]
in $\mathbb R^2 - D(0,2)$.
\label{le:5.11}
\end{lemma}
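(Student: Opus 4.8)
The plan is to derive the expansion of $u_0$ by inverting the elementary relation $u_0(\cosh s) = s$ for $s \leq 0$, that is, by solving $r = \cosh s$ for $s$ in terms of $r$ as $r \to \infty$. Since $s \leq 0$ we have $\cosh s = \frac{1}{2}(e^s + e^{-s})$, and the dominant term as $s \to -\infty$ is $\frac{1}{2} e^{-s}$. First I would write $r = \cosh s = \frac{1}{2} e^{-s}(1 + e^{2s})$, so that $e^{-s} = 2r (1+e^{2s})^{-1}$, and hence
\[
-s = \log(2r) - \log(1 + e^{2s}) = \log(2r) + \mathcal{O}(e^{2s}) .
\]
From this rough estimate $-s = \log(2r) + \mathcal{O}(1)$ one already gets $e^{2s} = \mathcal{O}(r^{-2})$, and feeding this back into the identity gives
\[
s = -\log(2r) + \mathcal{O}(r^{-2}) ,
\]
which is the claimed leading-order expansion. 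Since $u_0(r) = s$ with $r = \cosh s$, this is exactly the asserted formula for $u_0(r)$.

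Next I would upgrade this pointwise bound to the $\mathcal{O}_{\mathring{\mathcal C}^\infty}$ statement, i.e. control of all derivatives with respect to $r\partial_r$ (the $\partial_\theta$ derivatives are trivial since everything is rotationally invariant). The cleanest way is to differentiate the defining relation: from $r = \cosh s$ we get $\frac{dr}{ds} = \sinh s$, so $\frac{ds}{dr} = \frac{1}{\sinh s}$, and therefore
\[
r \, \frac{du_0}{dr} = r \, \frac{ds}{dr} = \frac{\cosh s}{\sinh s} = \coth s .
\]
For $s \leq 0$ large in absolute value, $\coth s = -1 + \mathcal{O}(e^{2s}) = -1 + \mathcal{O}(r^{-2})$, which matches the derivative of $-\log(2r)$ plus an $\mathcal{O}(r^{-2})$ error; one then checks inductively that each further application of $r\partial_r$ produces an extra factor that is $\mathcal{O}(r^{-2})$, using that $r\partial_r$ acting on $e^{2s}$-type quantities (where $e^{2s} \sim r^{-2}$) again gives something of size $r^{-2}$. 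This establishes the expansion in the $\mathring{\mathcal C}^\infty$ sense on $\mathbb{R}^2 - D(0,2)$, the restriction $r \geq 2$ being just a convenient choice guaranteeing $s$ is bounded away from $0$ so that all the asymptotic expansions of $\cosh$, $\sinh$, $\coth$ are valid with uniform constants.

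I do not expect any genuine obstacle here: the statement is an elementary asymptotic inversion, and the proof is essentially the computation of the preceding two paragraphs. The only point requiring a little care is the bookkeeping for the higher derivatives in the $\mathring{\mathcal C}^\infty$ estimate — one must phrase the induction so that the error terms, which are series in $e^{2s}$, remain $\mathcal{O}(r^{-2})$ after repeated application of $r\partial_r$; this is immediate once one observes that $r\partial_r$ corresponds to $\partial_s / (\partial_s s)^{-1} = \sinh s \,\partial_s$ acting in the $s$ variable, and that $\sinh s \, \partial_s$ maps $e^{2ks}$ to a bounded multiple of $e^{2ks}$ for $s \leq 0$. This mirrors exactly the derivative estimates already invoked (without detailed proof) in Proposition~\ref{pr:4.1} for the Delaunay graph function $u_\tau$, so the argument is consistent with the paper's style of leaving such routine derivative bounds to the reader.
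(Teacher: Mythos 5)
Your proposal is correct and follows exactly the route the paper takes (the paper in fact offers no detailed proof: it simply records the inversion $s=-\log(2r)+\mathcal O_{\mathring{\mathcal C}^\infty}(r^{-2})$ of $r=\cosh s$, $s\le 0$, and states the lemma), so your elementary inversion plus derivative bookkeeping is the intended argument. One small slip in your last paragraph: $r\,\partial_r$ corresponds in the $s$ variable to $\coth s\,\partial_s$, not $\sinh s\,\partial_s$ (as your own computation $r\,du_0/dr=\coth s$ already shows); with $\coth s$ bounded on $s\le -c<0$, the operator indeed sends $e^{2ks}$ to a bounded multiple of $e^{2ks}\sim r^{-2k}$, which is what makes your induction close, whereas the literal claim for $\sinh s\,\partial_s$ would be false.
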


\subsection{Mapping properties of the Jacobi operator about the cateno\"{\i}d}

The functional analysis of the Jacobi operator about the cateno\"{\i}d is well
understood and some results can be found for example in \cite{Maz-Pac-Pol}.
Again the indicial roots of $J_0$ caracterize the asymptotic behavior of the
solutions of the homogeneous problem $J_{0,j} \, w =0$ where
\[
J_{0, j} : = \frac{1}{2 \, \cosh^2 s} \, \left( \partial_s^2 - j^2 + \frac{2}{\cosh^2 s}
\right) ,
\]
It is easy to see that the indicial roots of $J_{0, j}$ are equal to $\pm j$ and hence
the indicial roots of $J_0$ are equal to $\pm j$, for $j \in \mathbb N$.

For all $\delta \in \mathbb R$, we define the operator
\[
\begin{array}{rcccccclllll}
\mathcal L_\delta : & (\cosh s)^\delta \, \mathcal C^{2, \alpha} ( \mathbb R\times
S^1) & \longrightarrow & (\cosh s)^\delta \, \mathcal C^{0, \alpha} (\mathbb R
\times S^1)\\[3mm]
& w & \longmapsto & (\cosh s)^2 \, J_0 \, w ,
\end{array}
\]
where, as usual, the norms in the function spaces $\mathcal C^{k, \alpha} (\mathbb
R \times S^1)$ are computed with respect to the cylindrical metric $g_{cyl}$.

Paralleling what we have proven in \S 4, we have the~:
\begin{proposition}
Assume that $\delta \in (-2,2)$. Then, there exists $C >0$, only depending on $
\delta$, such that, for all $\bar w \in (\cosh s)^\delta \, \mathcal C^{2, \alpha}
(\mathbb R \times S^1)$, we have
\[
\|æ (\cosh s)^{-\delta} \, \bar w \|_{\mathcal C^{2, \alpha} (\mathbb R \times
S^1) }æ\leq C\, \| (\cosh s)^{-\delta} \, \mathcal L_\delta \, \bar w \|_{\mathcal C^{0,
\alpha} (\mathbb R \times S^1) } ,
\]
provided
\begin{equation}
\int_{S^1} \bar w (s,\theta)\, d \theta = \int_{S^1} \bar w (s,\theta)\, e^{\pm i \theta} \,
d \theta = 0 ,
\label{oc}
\end{equation}
for all $s \in \mathbb R$.
\label{pr:linres10bis}
\end{proposition}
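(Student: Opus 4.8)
The plan is to run a contradiction argument in Fourier modes, exactly parallel to the proof of Proposition~\ref{pr:linres10}, but using the weighted Sobolev/maximum-principle analysis tailored to the explicit potential $2/\cosh^2 s$ of the catenoid, which is known exactly (unlike the Delaunay potential, which only converges to it). By Schauder's interior elliptic estimates applied on each slab $[s,s+1]\times S^1$ it suffices to prove the $L^\infty$-version
\[
\|(\cosh s)^{-\delta}\,\bar w\|_{L^\infty(\mathbb R\times S^1)}\leq C\,\|(\cosh s)^{-\delta}\,(\cosh s)^2 J_0\,\bar w\|_{L^\infty(\mathbb R\times S^1)},
\]
for all $\bar w$ whose Fourier expansion in $\theta$ omits the modes $j=0,\pm1$. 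Since the $0$ and $\pm1$ modes are the only obstruction here (their indicial roots are $0$ and $1$, both inside $(-2,2)$), discarding them is precisely what makes the full range $\delta\in(-2,2)$ available.

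First I would argue by contradiction: suppose there is a sequence $\bar w_n$ with $\|(\cosh s)^{-\delta}\bar w_n\|_{L^\infty}=1$ but $\|(\cosh s)^{-\delta}(\cosh s)^2 J_0\bar w_n\|_{L^\infty}\to 0$. Pick $s_n$ with $(\cosh s_n)^{-\delta}\|\bar w_n(s_n,\cdot)\|_{L^\infty(S^1)}\geq 1/2$. Here the key dichotomy is on the behavior of $s_n$. If $s_n$ stays in a bounded set, translate nothing and pass to the limit using elliptic estimates and Ascoli--Arzel\`a to get a nontrivial bounded-by-$(\cosh s)^\delta$ solution $\bar w_\infty$ of $(\partial_s^2+\partial_\theta^2+2/\cosh^2 s)\bar w_\infty=0$ on $\mathbb R\times S^1$, still omitting modes $j=0,\pm1$. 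If $|s_n|\to\infty$, rescale $\bar v_n(s,\theta):=(\cosh s_n)^{-\delta}\bar w_n(s+s_n,\theta)$; since $2/\cosh^2(s+s_n)\to 0$ uniformly on compacts, the limit $\bar w_\infty$ solves $(\partial_s^2+\partial_\theta^2)\bar w_\infty=0$ with the same mode restriction and the same growth bound. In either case I examine the Fourier modes $w^{(j)}(s)$ for $|j|\geq 2$. For the flat limit, $w^{(j)}$ is a combination of $e^{\pm js}$, and with $|j|\geq 2>|\delta|$ no nonzero combination is $O((\cosh s)^{\delta})=O(e^{|\delta||s|})$ at both ends; for the catenoid limit, $w^{(j)}$ solves $(\partial_s^2-j^2+2/\cosh^2 s)w^{(j)}=0$, whose two solutions behave like $e^{\pm js}$ at infinity (the potential decays exponentially), so again the bound forces $w^{(j)}$ to be bounded, and then the maximum principle (the potential $-j^2+2/\cosh^2 s<0$ since $j^2\geq 4>2$) forces $w^{(j)}\equiv 0$. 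Hence $\bar w_\infty\equiv 0$, contradicting $\|\bar w_\infty(0,\cdot)\|_{L^\infty(S^1)}\geq 1/2$ obtained from the normalization at $s_n$ (after the translation the mass sits at $s=0$).

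The main obstacle, and the only place requiring care, is the case $|j|=2$ at the catenoid limit, where $|j|=2$ is the borderline exponent for the interval $(-2,2)$: one must check that the solution asymptotic to $e^{-2s}$ at $+\infty$ is genuinely not $O(e^{\delta s})$ for $\delta\in(-2,2)$ at the other end, i.e. that it blows up like $e^{2s}$ at $-\infty$ rather than decaying — equivalently that the equation $(\partial_s^2-4+2/\cosh^2 s)w=0$ has no nontrivial solution decaying at both $\pm\infty$. This follows because such a solution would be an $L^2$ eigenfunction of $-\partial_s^2-2/\cosh^2 s$ with eigenvalue $-4<0$, which is impossible since that Schr\"odinger operator is nonnegative off its single bound state (or, more elementarily, by a direct integration-by-parts / Wronskian argument). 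Once this borderline mode is dispatched, all $|j|\geq 2$ are handled uniformly and the contradiction closes. The argument for the derivative estimates is then automatic from Schauder as noted at the outset; I would leave those routine details to the reader, exactly as in the proof of Proposition~\ref{pr:linres10}.
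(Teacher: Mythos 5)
Your argument is correct and is precisely what the paper intends: its proof of this proposition consists only of the remark that it is parallel to the proof of Proposition~\ref{pr:linres10}, and your contradiction/blow-up argument with Fourier decomposition, treatment of the flat and catenoid limit operators, and the maximum principle for the modes $|j|\geq 2$ is exactly that adaptation (your extra worry about $|j|=2$ is already dispatched by the same maximum-principle step, since $j^2\geq 4>2$ and $|\delta|<2$ forces decay at both ends).
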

\begin{proof}
The proof is really parallel to the proof of Proposittion~\ref{pe:linres10} and is left to
the reader.
\end{proof}

The following result follows from the general theory developed in \cite{Pac} (see
Theorem 10.2.1 on page 61 and Proposition 12.2.1 on page 81) or \cite{McO},
\cite{Mel}, \cite{Maz}, \ldots For the sake of completeness we provide a
self-contained proof.
\begin{theorem}
Assume that $\delta \in (1, 2)$ then $\mathcal L_\delta$ is surjective and has a
$6$-dimensional kernel.
\label{th:5.1}
\end{theorem}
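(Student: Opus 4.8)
The plan is to analyze the operator $\mathcal L_\delta$ mode by mode in the Fourier decomposition with respect to $\theta$, exactly as was done for $L_\delta$ in \S 4. For each $j \in \mathbb Z$ we must understand the ODE
\[
\left(\partial_s^2 - j^2 + \frac{2}{\cosh^2 s}\right) w^{(j)} = g^{(j)} ,
\]
whose homogeneous solutions have been recorded (implicitly) in the discussion following Proposition~\ref{pr:linres10}. First I would treat the higher modes $|j| \geq 2$: here both indicial roots $\pm j$ lie outside the interval $(-2,2) \ni \delta$, the potential $-j^2 + 2/\cosh^2 s$ is negative, and a variation-of-constants formula together with the maximum principle (or the coercivity argument already used in the proof of Proposition~\ref{pr:linres100}) gives a unique solution in $(\cosh s)^\delta\,\mathcal C^{2,\alpha}$; the estimate is uniform and no kernel arises. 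This part is routine and Proposition~\ref{pr:linres10bis} is precisely tailored to it.

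Next I would handle the exceptional modes $j = 0$ and $j = \pm 1$, where the indicial root $\pm 1$ (for $j=\pm1$) or $0$ (for $j=0$) sits inside $(-2,2)$, so that the deficiency becomes visible. For $j=0$ the homogeneous solutions are $\tanh s$ (bounded, decaying like a constant, hence in $(\cosh s)^\delta\,\mathcal C^{2,\alpha}$ for $\delta \in (1,2)$ only in the wide sense — actually bounded, so it lies in every $(\cosh s)^\delta$ with $\delta \geq 0$) and $1 - s\tanh s$ (linearly growing, hence \emph{not} in $(\cosh s)^\delta\,\mathcal C^{2,\alpha}$ for $\delta < 1$, but it \emph{is} for $\delta \in (1,2)$ since linear growth is dominated by $(\cosh s)^\delta$). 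So for $\delta \in (1,2)$ the mode $j=0$ contributes a $2$-dimensional kernel. Likewise for $j = 1$ the homogeneous solutions are $(\cosh s)^{-1}$ and $s(\cosh s)^{-1} + \sinh s$; the first decays and the second grows like $(\cosh s)^1$, so again for $\delta \in (1,2)$ both lie in $(\cosh s)^\delta\,\mathcal C^{2,\alpha}$, contributing a $2$-dimensional kernel. The same for $j = -1$, giving another $2$ dimensions. Altogether the kernel is $2 + 2 + 2 = 6$-dimensional, and these six Jacobi fields are geometrically the ones generated by vertical translation, the two horizontal translations, and the two rotations tilting the axis, plus the dilation — in fact on the catenoid the relevant bounded/slowly-growing Jacobi fields come from translations and the one-parameter family of catenoids (the "Lobell" deformation); identifying them explicitly is a useful sanity check but not logically required.

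For surjectivity, given $g \in (\cosh s)^\delta\,\mathcal C^{0,\alpha}$ I would solve mode by mode. For $|j|\geq 2$ solvability is already established. For $j = 0, \pm 1$, since the homogeneous solutions include one that grows no faster than linearly (hence is $o((\cosh s)^\delta)$ as $\delta > 1$) and one that decays, the variation-of-constants integral $\int^s$ produces a particular solution whose growth is controlled by that of the "large" homogeneous solution times $\int (\cosh t)^{\delta}(\cosh t)^{\mp j}\,dt$; since $\delta < 2 = $ (sum of the relevant exponents in the worst case $j=0$, where the growing solution is only linear) one checks the integral converges appropriately after choosing the base point at $\pm\infty$ on the side dictated by the sign, so that the particular solution lies in $(\cosh s)^\delta\,\mathcal C^{2,\alpha}$. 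Summing the Fourier modes and invoking Schauder estimates yields a solution in the target weighted space. The main obstacle is the bookkeeping for the exceptional modes $j = 0, \pm 1$: one must verify carefully that the condition $\delta < 2$ (and $\delta > 1$, which is what kills the possibility that the growing homogeneous solution is \emph{not} allowed in the kernel, thereby fixing the kernel dimension at exactly $6$ rather than smaller) gives simultaneously (i) solvability with no obstruction and (ii) the precise count of independent homogeneous solutions that belong to the weighted space — equivalently, that $1$ and $2$ are the only integer indicial roots in $(-2,2)$ and that each contributes with the expected multiplicity. Once this is settled the rest assembles formally.
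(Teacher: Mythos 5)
Your proposal follows essentially the same route as the paper's proof: Fourier decomposition in $\theta$, the a priori estimate of Proposition~\ref{pr:linres10bis} (equivalently the coercivity and maximum-principle argument) to dispose of the modes $|j|\geq 2$, the explicit homogeneous solutions coming from the geometric Jacobi fields in the modes $j=0,\pm 1$ to exhibit exactly six kernel elements in $(\cosh s)^{\delta}\,\mathcal C^{2,\alpha}$ when $\delta\in(1,2)$, and a variation-of-constants particular solution (exploiting $\delta>1$) to get surjectivity in those exceptional modes. The only blemishes are harmless side remarks that do not affect the argument in the range $\delta\in(1,2)$: the linearly growing solution $1-s\tanh s$ actually lies in $(\cosh s)^{\delta}\,\mathcal C^{2,\alpha}$ for every $\delta>0$ (so your ``not for $\delta<1$'' is off), the integer indicial roots inside $(-2,2)$ are $0,\pm1$ rather than $1$ and $2$, and for $\delta\in(1,2)$ the natural base point in the variation-of-constants formula is $s=0$ (as in the paper) rather than $\pm\infty$.
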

\begin{proof}
The proof follows the proof of Proposition~\ref{pr:linres100}.

Recall that the action of rigid motions and dilations provides many Jacobi fields.
For example,
\begin{equation}
J_0 \, \left( \tanh s \right) =0 \qquad \mbox{and} \qquad J_0 \, \left( 1 - s \tanh s
\right) =0,
\label{eq:jf1}
\end{equation}
which either follows from direct computation or from the fact that these are the
Jacobi fields associated to the group of vertical translations and the group of
dilations centered at the origin.

Similarly
\begin{equation}
J_0 \, \left( \frac{1}{\cosh s} \, e^{\pm i \theta} \right) =0 \qquad \mbox{and} \qquad
J_0 \, \left( \left( \sinh s + \frac{1}{\cosh s} \right) \, \, e^{\pm i \theta} \right) =0,
\label{eq:jf2}
\end{equation}
which again either follows from direct computation or from the fact that these are
the Jacobi fields associated to the group of horizontal translations and the group of
rotations about the vertical axis, centered at the origin.

This already shows that, when $\delta \in (1, 2)$, the kernel of $\mathcal L_\delta$
is at least $6$-dimensional.

We first assume that $f \in \mathcal C^{0, \alpha} (\mathbb R\times S^1)$ has
compact support and we decompose it as $f (s, \theta) = f_0(s) + f_{\pm 1} (s) \,
e^{\pm i \, \theta} + \bar f (s, \theta)$ where, by definition,
\[
\bar f : = \sum_{j \neq 0, \pm 1}æf_j (s) \, e^{ij\theta} .
\]

If we restrict our attention to functions $\bar w$ whose Fourier decomposition in the
$\theta$ variable is of the form
\[
\bar w (s, \theta) = \sum_{j \neq 0, \pm 1} w_j(s) \, e^{ij\theta} ,
\]
we have
\begin{equation}
\int_{\mathbb R\times S^1} \left( |\partial_s \bar w|^2 + |\partial_\theta \bar w|^2 -
\frac{2}{ \cosh^2 s} \, \bar w^2 \right) \, ds \, d\theta \geq 2 \, \int_{\mathbb R \times
S^1} \bar w^2 \, ds \, d\theta .
\label{coercbis}
\end{equation}
Therefore, we can solve
\[
(\cosh s)^2 \, J_0 \, \bar w = \bar f,
\]
in $H^1(\mathbb R\times S^1)$. Elliptic estimates then imply that $\bar w \in
\mathcal C^{2, \alpha} (\mathbb R \times S^1)$.

Obviously $\bar w \in (\cosh s)^\delta \, \mathcal C^{2, \alpha} (\mathbb R \times
S^1)$, since $\delta >0$. The result of Proposition~\ref{pr:linres10bis} applies and
we get
\[
\|æ (\cosh s)^{-\delta} \, \bar w \|_{\mathcal C^{2, \alpha} (\mathbb R \times S^1)} \leq
C \, \| (\cosh s)^{-\delta} \, f \|_{\mathcal C^{0, \alpha} (\mathbb R \times S^1)} ,
\]
for any function $f$ having compact support. The general result, when $f$ does not
necessarily have compact support, follows from a standard exhaustion argument.

Finally, the solvability of
\[
(\cosh s)^2 \, J_0 \, (w_j \, e^{ij\theta} ) = f_j \, e^{ij\theta},
\]
for $j=0, \pm1$, follows easily from integration of the associated second order
ordinary differential equation starting from $0$ (with initial data and initial velocity
equal to $0$). We have explicitly
\[
w_j = A_j^+ \, \int_0^s A_j^- (t) \, f_j(t) \, dt - A_j^- \, \int_0^s A_j^+ (t) \, f_j(t) \, dt ,
\]
where $A_j^\pm$ are the two independent solutions of
\[
\left( \partial_s^2 -j^2+ \frac{2}{\cosh^2 s} \right) \, A_j^\pm =0,
\]
which are given in (\ref{eq:jf1}) and (\ref{eq:jf2}) and are normalized so that their
Wronskian is equal to $1$. Direct estimates imply that
\[
\|æ (\cosh s)^{-\delta} \, w_j \|_{\mathcal C^{2, \alpha} (\mathbb R )} \leq C \, \| (\cosh
s)^{-\delta} \, f \|_{\mathcal C^{0, \alpha} (\mathbb R \times S^1)} ,
\]
provided $\delta > 1$ (more precisely, $\delta >0$ is needed to derive the estimate
for $w_0$ and $\delta >1$ is needed to derive the estimate for $w_{\pm1}$).
We set $w = w_0 + w_{\pm 1}\, e^{\pm i \theta} + \bar w$. This completes the proof
of the fact that the operator $\mathcal L_\delta$ is surjective when $\delta \in
(1,2)$. The fact that this operator, restricted to the space of functions satisfying the
orthogonality conditions (\ref{oc}) is injective follows from the result of Proposition~
\ref{pr:linres10bis}. Hence the kernel of $\mathcal L_\delta$ is $6$-dimensional.
\end{proof}

\subsection{The mean curvature of normal graphs over the cateno\"{\i}d}

We consider in this section the mean curvature of a surface which is a normal
graph over $\mathfrak C$. Hence, for some smooth (small enough) function $w$
defined on $\mathfrak C$, we consider the surface parameterized by
\[
Y (s,\theta) = Y_0 (s, \theta) + w (s, \theta) \, N_0 (s, \theta) .
\]
We have the following technical result~:
\begin{lemma}
The mean curvature of the surface parameterized by $Y$ is given by
\[
H(w) = J_0 w + \frac{1}{\cosh s} \, Q_0 \left( \frac{w}{\cosh s} \right) ,
\]
where the nonlinear second order differential operator $Q_0$ satisfies
\[
\begin{array}{rlllll}
\|æQ_0 ( v_2) - Q_0 (v_1) \|_{\mathcal C^{0, \alpha} ([s, s+1] \times S^1)} \leq c \,
\left( \|æv_1 \|_{\mathcal C^{2, \alpha} ([s, s+1] \times S^1)} + \|æv_2 \|_{\mathcal
C^{2, \alpha} ([s, s+1] \times S^1)} \right) \\[3mm]
\times \, \|v_2 - v_1 \|_{\mathcal C^{2, \alpha} ([s, s+1] \times S^1)} ,
\end{array}
\]
for some constant $c >0$ independent of $s$ and $\tau \in (0,1)$, for all functions
$v_1$, $v_2$ satisfying $\|æv_i \|_{\mathcal C^{2, \alpha} ([s, s+1] \times S^1)} \leq
1$.
\end{lemma}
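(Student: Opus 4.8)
The plan is to reduce everything to Proposition~\ref{pr:mcng}, exactly as in the proof of Lemma~\ref{le:mcffncf1}, but now specialized to $\mathfrak C$, which is actually easier because $\mathfrak C$ is minimal (so the degree-zero term vanishes) and because the geometric tensors are very explicit. First I would record the data: on $\mathfrak C$ we have $g_0 = \cosh^2 s\,(ds^2+d\theta^2)$, $h_0 = ds^2 - d\theta^2$, and the shape operator tensor $k_0 = g_0(\nabla N_0,\nabla N_0)$, which (since the principal curvatures are $\pm\cosh^{-2}s$) equals $\cosh^{-2}s\,(ds^2 + d\theta^2) = \cosh^{-4}s\, g_0$. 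Plugging these into the formula of Proposition~\ref{pr:mcng} with $z=w$, and expanding the parallel metric $g_{0,z} = g_0 - 2z\,h_0 + z^2 k_0 = \cosh^2 s\bigl((1 - z\cosh^{-2}s)^2\,ds^2 + (1+z\cosh^{-2}s)^2\,d\theta^2\bigr)$, one sees that $\cosh^{-2}s\, g_{0,w}$ has coefficients that are smooth bounded functions of $w/\cosh s$, with smooth bounded inverse provided $w/\cosh s$ is small; similarly $\cosh^{-1}s\,(h_0 - w\,k_0)$ has coefficients that are smooth bounded functions of $w/\cosh s$. Collecting the linear-in-$w$ part reproduces $J_0 w$, and all remaining terms are at least quadratic.

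Next I would organize the error terms to exhibit the claimed structure $H(w) = J_0 w + \frac{1}{\cosh s}\,Q_0\!\bigl(\frac{w}{\cosh s}\bigr)$. The key bookkeeping point, just as in Lemma~\ref{le:mcffncf1}, is that every derivative $\partial_s w$ or $\partial_\theta w$ that appears can be rewritten in terms of derivatives of $w/\cosh s$: indeed
\[
\frac{\partial_s w}{\cosh s} = \partial_s\!\left(\frac{w}{\cosh s}\right) + \tanh s\,\frac{w}{\cosh s},
\qquad
\frac{\partial_\theta w}{\cosh s} = \partial_\theta\!\left(\frac{w}{\cosh s}\right),
\]
and the coefficient $\tanh s$ together with all its derivatives is bounded on $\mathbb R$. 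Hence, after dividing through by the conformal factor, the nonlinear part of $H(w)$ is an expression built from $v := w/\cosh s$ and its first and second derivatives $\partial_s^k\partial_\theta^\ell v$ ($k+\ell\le 2$), multiplied by coefficients that are smooth bounded functions of $v$ and of $\tanh s$, and overall carrying a prefactor $\frac{1}{\cosh s}$ coming from the $\cosh^{-2}s\cdot\cosh s$ mismatch between the conformal factor of $g_0$ and the single power of $\cosh s$ pulled out. This is exactly what we mean by $\frac{1}{\cosh s}\,Q_0(v)$, with $Q_0$ a second-order nonlinear differential operator vanishing to second order at $v=0$ whose coefficients depend only on $s$ and on $v$ (not on $\tau$).

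Finally I would prove the Lipschitz estimate for $Q_0$ on the unit ball. Since $Q_0(v)$ at a point $(s,\theta)$ depends on the $2$-jet of $v$ at that point through a function that is smooth (hence locally Lipschitz with uniform constants, using that the ball $\{|v|_{\mathcal C^{2,\alpha}}\le 1\}$ keeps all the jet variables in a fixed compact set on which the denominators $1\pm v\cosh^{-2}s$ and $\det(\cosh^{-2}s\, g_{0,w})$ stay bounded away from $0$), the difference $Q_0(v_2) - Q_0(v_1)$ is controlled, on each unit cylinder $[s,s+1]\times S^1$, by the $\mathcal C^{2,\alpha}$-norm of the difference of the $2$-jets times a constant depending on $\|v_1\|_{\mathcal C^{2,\alpha}} + \|v_2\|_{\mathcal C^{2,\alpha}}$; the Schauder product estimates on a fixed-size cylinder give the stated inequality, with a constant $c$ independent of $s$ (by translation invariance of $\mathfrak C$ in $s$ modulo the bounded coefficients $\tanh s$) and independent of $\tau$ (since nothing here involves $\tau$ at all). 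The only mildly delicate point — the main obstacle, such as it is — is to be careful that the algebraic manipulation genuinely isolates the full linear term as $J_0 w$ and leaves a remainder that is quadratic \emph{with no leftover linear piece}; this is guaranteed because $D H_{w=0} = J_0$ by the computation recalled in \S 2.2, so one may simply \emph{define} $\frac{1}{\cosh s}Q_0(w/\cosh s) := H(w) - J_0 w$ and then check, by inspecting the explicit expansion above, that this remainder has the claimed form and estimates. I leave the routine verification of the coefficient bounds to the reader, exactly as in the proof of Lemma~\ref{le:mcffncf1}.
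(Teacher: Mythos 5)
Your argument is correct and follows essentially the same route as the paper, which simply invokes Proposition~\ref{pr:mcng} together with the explicit formula $g_w = \cosh^2 s\,\bigl((1-\tfrac{w}{\cosh^2 s})^2\,ds^2+(1+\tfrac{w}{\cosh^2 s})^2\,d\theta^2\bigr)$ and leaves the remaining bookkeeping to the reader. Your proposal merely carries out in detail the steps the paper omits (the computation of $k_0$, the rewriting of $\partial_s w/\cosh s$ in terms of $w/\cosh s$ and its derivatives, and the uniform Lipschitz estimate on unit cylinders), all consistently with the proof of Lemma~\ref{le:mcffncf1}.
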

\begin{proof}
This result is already proven in \cite{Maz-Pac-1}. In any case, a simple proof follows
easilly from Proposition~\ref{pr:mcng} together with the fact that
\[
g_w = \cosh^2 s \, \left( \left(1 - \frac{w}{\cosh^2 s}\right)^2 \, ds^2 + \left(1+
\frac{w}{\cosh^2 s}\right)^2 \, d\theta^2\right).
\]
We leave the details to the reader.
\end{proof}

\subsection{A second fixed point argument}

Assume that $\tau , \tilde \tau >0$ are chosen small enough and satisfy
\begin{equation}
\left| \tilde \tau - \frac{\tau}{m+1} \right| \leq \kappa \, \tau^{3/2} ,
\label{eq:tt}
\end{equation}
where the constant $\kappa >0$ is fixed large enough (the value of $\kappa$ will
be fixed in the last section of the paper). The rational for this estimate will also be
explained in the last section of the paper. We define $\tilde s> 0$ by
\[
\tilde \tau \, \cosh \tilde s= \tau^{3/4} .
\]
Observe that $\tilde s$ depends on both $\tau$ and $\tilde \tau$ even though we
have chosen not to make this apparent in the notation. It is easy to check that $\tilde
s = - \frac{1}{4}æ\log \tau + \mathcal O (1)$. We define the truncated cateno\"{\i}d $
\mathfrak C_{\tilde \tau}$ to be the image of $[-\tilde s , \tilde s ] \times S^1$ by $
\tilde \tau \, Y_0$ (to simplify the notations, we do not write the dependence of this
surface on the parameter $\tau$).

Building on the previous analysis, we prove the existence of {\em constant mean
curvature surfaces} which are close to the truncated cateno\"{\i}d $\mathfrak
C_{\tilde \tau}$ and have two boundaries which can be described by some
function $f : S^1 \longrightarrow \mathbb R$. We also require that the surfaces are
invariant under the action of the symmetry with respect to the horizontal plane.
More precisely, we have the following~:
\begin{proposition}
Assume we are given $\kappa >0$ large enough (the value of $\kappa$ will be
fixed later on). For all $\tau, \tilde \tau >0$ small enough satisfying (\ref{eq:tt}) and
for all functions $f$ invariant under the action of the ${\rm Dih}^{(2)}_{m+1}$,
satisfying (H1) (notice that (H2) is automatically satisfied) and
\begin{equation}
\|æf\|_{\mathcal C^{2, \alpha} (S^1)} \leq \kappa \, \tau^{3/2} ,
\label{eq:estimf}
\end{equation}
there exists a constant mean curvature surface $\mathfrak C_{\tilde \tau ,f}$ which
is close to $\mathfrak C_{\tilde \tau}$ and has two boundaries. The surface $
\mathfrak C_{\tilde \tau ,f}$ is invariant under the action of $\mathcal S_3$,
the symmetry with respect to the horizontal plane $x_3=0$, $\mathcal S_2$,
the symmetry with respect to the plane $x_2=0$, and, close to its lower
boundary, the surface $\mathfrak C_{\tilde \tau ,f}$ is a {\em vertical graph}
over the annulus
\[
\left\{ x \in \mathbb R^2 \, : \, \frac{1}{2} \, \tau^{3/4} \leq |x| \leq \tau^{3/4} \right\} ,
\]
for some function $x \longmapsto U^\downharpoonright_{\tilde \tau ,f} (\tau^{-3/4} \,
x)$ which can be expanded as follows
\begin{equation}
U^\downharpoonright_{\tilde \tau ,f} (x) = - \tilde \tau \, \log \left( \frac{2 \,
\tau^{3/4} }{\tilde \tau } \right) - \tilde \tau \, \log |x| + W^{\rm ins}_f (x) + \bar U^
\downharpoonright_{\tilde \tau, f} ( x ),
\label{eq:5.200}
\end{equation}
where we recall that $W^{\rm ins}_f$ denotes the bounded harmonic extension of
$f$ in the punctured unit disc and where
\begin{equation}
\| \bar U^\downharpoonright_{\tilde \tau, 0} \|_{\mathcal C^{2, \alpha} ( \overline D(0,
1) - D (0, 1/2) )} \leq C \, \tau^{3/2} .
\label{eq:est-1bis}
\end{equation}
Moreover, the nonlinear mapping
\[
\mathcal C^{2, \alpha} (S^1) \ni f \longmapsto \bar U^\downharpoonright_{\tilde
\tau ,f} \in \mathcal C^{2, \alpha} ( \overline D(0, 1) - D (0, 1/2) ),
\]
is Lipschitz and, given $\delta \in (1,2)$, we have
\begin{equation}
\| \bar U^\downharpoonright_{\tilde \tau ,f'} - \bar U^\downharpoonright_{\tilde \tau ,
f} \|_{\mathcal C^{2, \alpha} ( \overline D(0, 1) - D (0, 1/2) )} \leq C \, \tau^{(2-\delta)/
4} \, \| f' -f \|_{\mathcal C^{2, \alpha}( S^1)} ,
\label{eq:est-2bis}
\end{equation}
for some constant $C > 0$ independent of $\kappa , \tau, \tilde \tau$ and $f, f'$.
The function $ \bar U^\downharpoonright_{\tilde \tau ,f} $ depends continuously
on $\tilde \tau$.
\label{pr:5.1}
\end{proposition}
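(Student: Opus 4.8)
\emph{Plan.} I would follow the scheme of the proof of Proposition~\ref{pr:4.4}, with two modifications: a scaling reduces matters to the fixed unit cateno\"{\i}d $\mathfrak C$, and the truncated cateno\"{\i}d carries two boundaries rather than one. First, since unit normals are scale invariant and $H_{\lambda\Sigma}=\lambda^{-1}H_\Sigma$, a normal graph over $\mathfrak C_{\tilde\tau}=\tilde\tau\,\mathfrak C$ with graph function $w$ has mean curvature $1$ if and only if the normal graph over $\mathfrak C$ with graph function $\mathbf w:=w/\tilde\tau$ has mean curvature $\tilde\tau$; by the Lemma of \S\,5.4 this amounts to solving, on $[-\tilde s,\tilde s]\times S^1$,
\[
J_0\,\mathbf w+\frac{1}{\cosh s}\,Q_0\!\left(\frac{\mathbf w}{\cosh s}\right)=\tilde\tau .
\]
Given $f$ invariant under ${\rm Dih}^{(2)}_{m+1}$ and satisfying (H1) — so that, as $m\ge1$, (H2) also holds — I take $F$ to be the bounded ($g_{cyl}$-)harmonic extension of $\tilde\tau^{-1}f$ on $[-\tilde s,\tilde s]\times S^1$, extending $f$ at the lower boundary and its $\mathcal S_3$-image at the upper one. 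Since $\tilde\tau\sim\tau/(m+1)$, the length $2\tilde s=-\tfrac12\log\tau+\mathcal O(1)$ is large, and — (H1) and (H2) both holding — $F$ decays like $e^{-2|s\mp\tilde s|}$ away from each end by the Lemma of \S\,3, so the two ends interact only through exponentially small terms. Writing $\mathbf w=F+v$ and using $(\cosh s)^2J_0F=(\cosh s)^{-2}F$ (as $F$ is $g_{cyl}$-harmonic), the equation becomes
\[
(\cosh s)^2J_0\,v=\tilde\tau\cosh^2 s-\frac{F}{\cosh^2 s}-\cosh s\;Q_0\!\left(\frac{F+v}{\cosh s}\right).
\]

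\emph{Fixed point.} I would then compose the right-hand side with an extension operator $\mathcal E$ (the identity on $[-\tilde s,\tilde s]\times S^1$, zero outside $[-\tilde s-1,\tilde s+1]\times S^1$, bounded on the buffer) and with the bounded right inverse of the \emph{$\tau$-independent} operator $\mathcal L_\delta$ furnished by the proof of Theorem~\ref{th:5.1}, for a fixed $\delta\in(1,2)$ (coercivity on the modes $j\neq 0,\pm1$; for $j=0,\pm1$, integration of the associated ODE from $s=0$ with vanishing Cauchy data, which pins down one element of the $6$-dimensional kernel — two-dimensional on the equivariant subspace). This rephrases the equation as a fixed point problem for $v$ in a ball of $(\cosh s)^\delta\,\mathcal C^{2,\alpha}(\mathbb R\times S^1)$. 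The estimates are then routine: the forcing $\tilde\tau\cosh^2 s$ has weighted norm $\tilde\tau(\cosh\tilde s)^{2-\delta}=\tilde\tau^{\delta-1}\tau^{3(2-\delta)/4}\sim\tau^{(2+\delta)/4}\to 0$ — this is where $\delta<2$ is used; the term $(\cosh s)^{-2}F$ has weighted norm $\lesssim\tau^{-1/2}\|f\|_{\mathcal C^{2,\alpha}(S^1)}\le\kappa\,\tau$, the factor $e^{-2\tilde s}$ coming from the decay of $F$ being precisely what keeps it small; and $Q_0$ — for which $\|(F+v)/\cosh s\|\lesssim\tau^{3/4}$ on each unit sub-interval, so that it is well defined with Lipschitz constant $\ll1$ — contributes the quadratically small terms, hence the contraction, through the Lemma of \S\,5.4. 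A contraction mapping argument then yields, for all small $\tau$, a solution $v$ with $\|(\cosh s)^{-\delta}v\|_{\mathcal C^{2,\alpha}}\lesssim\tau^{(2+\delta)/4}$, depending on $f$ with $\tau^{(2-\delta)/4}$-Lipschitz constant (the gain again from $e^{-2\tilde s}$ applied to $F'-F$) and continuously on $\tilde\tau$. All data being equivariant, $v$ is invariant under ${\rm Dih}^{(2)}_{m+1}$ and under $\mathcal S_2$ (a reflection of the dihedral group); the symmetric choice of $F$ together with uniqueness of the fixed point gives $\mathcal S_3$-invariance.

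\emph{Graph near the lower boundary, and the main difficulty.} Finally, near $s=-\tilde s$ I would change variables $r=\tilde\tau\cosh s$ over $\tfrac12\tau^{3/4}\le r\le\tau^{3/4}$; inserting $u_0(r)=-\log(2r)+\mathcal O_{\mathring{\mathcal C}^\infty}(r^{-2})$ from Lemma~\ref{le:5.11} and $w=\tilde\tau\,\mathbf w=(\text{harmonic extension of }f)+\tilde\tau v$ (by linearity), one reads off that the surface is a vertical graph with leading part $-\tilde\tau\log(2\tau^{3/4}/\tilde\tau)-\tilde\tau\log|x|+W^{\rm ins}_f(x)$, the remainder $\bar U^\downharpoonright_{\tilde\tau,f}$ collecting the $\mathcal O(\tilde\tau^3/r^2)=\mathcal O(\tau^{3/2})$ tail of $u_0$, the term $\tilde\tau v=\mathcal O(\tau^{3/2})$ when $f=0$, and the $\mathcal O(\tau^{1/2})\|f\|$ mismatch between the cylindrical coordinate $e^{-\tilde s-s}$ on $\mathbb R^2-\{0\}$ and $r/\tau^{3/4}=\cosh s/\cosh\tilde s$, exactly as in Proposition~\ref{pr:4.4}; this gives (\ref{eq:5.200})--(\ref{eq:est-2bis}). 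I expect the one genuinely delicate point to be that one inverts, on the \emph{bounded} cylinder $[-\tilde s,\tilde s]\times S^1$, a source $\tilde\tau\cosh^2 s$ that \emph{grows} toward the truncated ends, yet must obtain bounds uniform as $\tau\to0$: this confines the weight to $\delta\in(1,2)$ — the lower bound inherited, as in \S\,4, from the $j=\pm1$ asymptotics, the upper bound being exactly what forces $\tilde\tau(\cosh\tilde s)^{2-\delta}$ to $0$ — and relies crucially on the relations $\tilde\tau\cosh\tilde s=\tau^{3/4}$, $\tilde\tau\sim\tau/(m+1)$, and on hypothesis (H2), whose $e^{-2\tilde s}$ gains alone keep the $F$-terms under control.
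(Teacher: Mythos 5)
Your overall scheme is the paper's own: rescale so that one perturbs the unit cateno\"{\i}d and solves for mean curvature $\tilde\tau$, take the harmonic extension of the rescaled boundary data (whose $e^{-2|s\mp\tilde s|}$ decay, coming from (H1)--(H2), is what keeps the $F$-terms of size $O(\tau)$), cut off with an extension operator, invert $\mathcal L_\delta$ for a fixed $\delta\in(1,2)$ via Theorem~\ref{th:5.1}, run a contraction in the ball of radius $C\,\tau^{(2+\delta)/4}$ of $(\cosh s)^{\delta}\,\mathcal C^{2,\alpha}$, get the symmetries from equivariance and uniqueness of the fixed point, and finally change variables $r=\tilde\tau\cosh s$ near $s=-\tilde s$; your weighted estimates (forcing $\sim\tau^{(2+\delta)/4}$, $F$-term $\sim\tau^{-1/2}\|f\|$, quadratic terms controlled by $\|(F+v)/\cosh s\|\lesssim\tau^{3/4}$) agree with the paper's.

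The one genuine gap is in the last step. The paper does \emph{not} perturb by true normal graphs up to the truncation circles: in the regions $|\,|s|-\tilde s\,|\le 2$ it interpolates $N_0$ to the vertical field $\pm e_3$ (this is the origin of the extra operator $\ell_0$, supported there, with coefficients $O(\tau^{1/2})$ since $N_0\cdot e_3=\mp\tanh\tilde s=\mp 1+O(\tau^{1/2})$). That modification is exactly what makes it legitimate to ``read off'' that near its lower boundary the surface is the vertical graph of $u_0$ plus the graph function, over precisely the annulus $\{\tfrac12\,\tau^{3/4}\le|x|\le\tau^{3/4}\}$, with boundary sitting over the circle $|x|=\tau^{3/4}$. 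With your pure normal-graph perturbation the displacement $\tilde\tau\,\mathbf w\,N_0$ has a horizontal component of size $\tilde\tau|\mathbf w|/\cosh\tilde s=O(\tau^{7/4})$ at the boundary: the height over a point $x$ of the annulus is not $\tilde\tau u_0(|x|/\tilde\tau)+\tilde\tau\mathbf w$ but involves $\tilde\tau\mathbf w\,(N_0\cdot e_3)$ together with a shift of the base point, the projected boundary curve is moved off the circle $|x|=\tau^{3/4}$ by $O(\tau^{7/4})$, and the surface need not be a graph over the \emph{whole} stated annulus --- which matters because the $\mathcal C^1$ matching of Section 7 is imposed exactly on that circle, shared with the spherical piece. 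The defect is repairable: the extra terms are of size $O(\tau^{1/2})$ times the graph function and $O(\tau^{7/4})$ in the base point (hence $O(\tau^2)$ in height through the slope $O(\tau^{1/4})$), so they are absorbable into $\bar U^\downharpoonright_{\tilde\tau,f}$ within the bounds (\ref{eq:est-1bis})--(\ref{eq:est-2bis}), provided you either carry out this normal-to-vertical conversion explicitly and re-truncate the surface along the vertical cylinder $|x|=\tau^{3/4}$ (solving on a slightly longer cylinder), or, as the paper does, build the vertical parameterization into the construction by modifying the vector field, at the harmless cost of the additional linear term $\ell_0$ in the fixed point equation. As written, your argument silently identifies the normal perturbation with a vertical one near the boundary, and that identification is the step that needs justification.
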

\begin{proof}
The proof of this result is very close to the proof of Proposition~\ref{pr:4.4} so we
shall only insist on the main differences.

Again, in the annular region which is the image of $(- \tilde s-2, - \tilde s+2) \times
S^1$ by $Y_{0}$, we modify the unit vector field $N_{0}$ into $\bar N_{0}$ in such
a way that $\bar N_{0}$ is equal to $e_3$ on the image of $(- \tilde s- 1, \tilde s+ 1)
\times S^1$ by $\tilde \tau \, Y_{0}$. We perform a similar modification on the upper
half of the cateno\"{\i}d, on the image of $(\tilde s-2, \tilde s+2) \times S^1$ by
$Y_{0}$, so that our construction remains invariant under the action of the
symmetry with respect to the horizontal plane. In this case, using
Proposition~\ref{pr:2.2}, one can check that the expression of the mean curvature
given in Lemma~\ref{le:mcffncf1} has to be altered into
\[
\bar H(w) = J_{0} w + \frac{1}{\cosh^2 s} \, \ell_{0} \, w + \frac{1}{\cosh s} \, \bar
Q_{0} \left( \frac{w}{\cosh s} \right) ,
\]
where $\bar Q_{0}$ enjoys properties which are similar to the properties enjoyed
by $Q_{0}$ and where $\ell_{0}$ is a linear second order partial differential
operator in $\partial_s$ and $\partial_\theta$ whose coefficients are smooth,
supported in $(- \tilde s-2, - \tilde s+2) \times S^1$ and in $(\tilde s-2, \tilde s+2)
\times S^1$ and are bounded (in the smooth topology) by a constant, independent
of $\tau$, times $\tau^{1/2}$.

We assume that $f$ is chosen to satisfy (H1), (H2) and
\[
\|æf \|_{\mathcal C^{2, \alpha} (S^1)} \leq \kappa \, \tau^{1/2}.
\]
Observe that the norm of the boundary data $f$ is bounded by a constant times $
\tau^{1/2}$ and not $\tau^{3/2}$. The reason being that we are going to perturb the
image of $[-\tilde s , \tilde s] \times S^1$ by $Y_0$ and then scale the surface we
obtain by a factor $\tilde \tau$ instead of perturbing $\mathfrak C_{\tilde \tau}$ which
is the image of $[-\tilde s , \tilde s] \times S^1$ by $\tilde Y_0$. Also this is the
reason why the equation we will solve is $\bar H(w) = \tilde \tau$ and not $\bar
H(w) =1$.

We denote by $F$ the harmonic extension of $f$ in $(-\infty , \tilde s) \times S^1$
and we set
\[
\tilde F (s, \theta) : = F(s, \theta )+ F(-s, \theta)
\]
which is well defined in $[- \tilde s, \tilde s] \times S^1$. One should be aware that
the boundary data of $F$ is not exactly equal to $f$ but the difference between $F$
and $f$ on the boundary tends to $0$ as $\tau$ tends to $0$. More precisely we
have
\[
\| F - \tilde F \|_{\mathcal C^{2, \alpha} ([\tilde s-1, \tilde s ] \times S^1)} \leq C \, \tau \,
\| f \|_{\mathcal C^{2, \alpha} (S^1)} .
\]

We would like to solve the equation
\begin{equation}
(\cosh s)^2 \, J_0 ( \tilde F+ w ) + \ell_{0} \, (\tilde F +w) + \cosh s \, Q_0
\left( \frac{ \tilde F+ w}{\cosh s} \right) = (\cosh s)^2 \, \tilde \tau \, .
\label{eq:mce22}
\end{equation}
in $(-\tilde s, \tilde s) \times S^1$. This will provide constant mean curvature
surfaces with mean curvature equal to $\tilde \tau$, which are close to the truncated
cateno\"{\i}d. Again, the solvability of this nonlinear problem will follow from a fixed
point theorem for a contraction mapping.

We choose
\[
\bar {\mathcal E}_\tau : \mathcal C^{0, \alpha} ([- \tilde s, \tilde s] \times
S^1)æ\longrightarrow \mathcal C^{0, \alpha} (\mathbb R \times S^1)
\]
an extension operator such that
\[
\left\{
\begin{array}{rllll}
\bar{\mathcal E}_\tau (\psi) & = & \psi \qquad & \mbox{in}æ\qquad [- \tilde s , \tilde s ]
\times S^1 \\[3mm]
\bar{\mathcal E}_\tau (\psi) & = & 0 \qquad & \mbox{in} \qquad ((- \infty , - \tilde s -
1] \cup [\tilde s+1, \infty)) \times S^1 ,
\end{array}
\right.
\]
and
\[
\| \bar{\mathcal E}_\tau (\psi) \|_{\mathcal C^{0, \alpha} ([ - \tilde s - 1, - \tilde s + 1]
\times S^1)} \leq C \, \| \psi \|_{\mathcal C^{0, \alpha} ([ - \tilde s, - \tilde s+1] \times
S^1) }.
\]
and
\[
\| \bar{\mathcal E}_\tau (\psi) \|_{\mathcal C^{0, \alpha} ([ \tilde s-1, \tilde s+1] \times
S^1) } \leq C \, \| \psi \|_{\mathcal C^{0, \alpha} ([ \tilde s-1, \bar s'] \times S^1) } .
\]

We rewrite (\ref{eq:mce22}) as
\begin{equation}
(\cosh s)^2 \, J_{0} w = \bar {\mathcal E}_\tau \, \left( (\cosh s)^2 \, ( \tilde \tau -
J_0 \, \tilde F ) - \ell_{0} \, (\tilde F+ w) - \cosh s \, Q_0 \left( \frac{ \tilde F+ w}{\cosh
s} \right) \right) .
\label{eq:mce2}
\end{equation}
Again, on the right hand side it is understood that we consider the image by $\bar
{\mathcal E}_\tau$ of the restriction of the functions to $[ - \tilde s, \tilde s] \times
S^1$.

We assume that $\delta \in (1,2)$ is fixed. It is easy to check that there exists a
constant $c >0$ (independent of $\kappa$) and a constant $c_\kappa >0$
(depending on $\kappa$) such that
\[
\left\|æ(\cosh s)^{-\delta} \, \bar{\mathcal E}_\tau \left(\cosh^2 s \, \tilde \tau \right) \right
\|_{\mathcal C^{2, \alpha} (\mathbb R\times S^1) } \leq c \, \tau^{(2 +\delta)/4} ,
\]
\[
\left\|æ(\cosh s)^{-\delta} \, \bar{\mathcal E}_\tau \left(\frac{\tilde F}{\cosh^2 s} \right)
\right\|_{\mathcal C^{2, \alpha} (\mathbb R\times S^1) } \leq c \, \tau^{1/2} \, \|æ
f\|_{\mathcal C^{2, \alpha} (S^1)} ,
\]
\[
\begin{array}{llll}
\left\|æ(\cosh s)^{-\delta} \, \bar{\mathcal E}_\tau \left( \ell_{0} \, (\tilde F+ w) \right)
\right\|_{\mathcal C^{2, \alpha} (\mathbb R\times S^1) }\\[3mm]
\qquad \leq c \, \tau^{1/2} \, \left( \|æf\|_{\mathcal C^{2, \alpha} (S^1)} + \tau^{\delta/
4} \, \| (\cosh s)^{-\delta} \, w\|_{\mathcal C^{2, \alpha} (\mathbb R \times S^1)} ,
\right) ,
\end{array}
\]
and
\[
\begin{array}{llll}
\displaystyle \left\|æ(\cosh s)^{-\delta} \, \bar{\mathcal E}_\tau \left( \cosh s \, Q
\left( \frac{w'+ \tilde F'}{\cosh s} \right) - \cosh s \, Q \left( \frac{w+ \tilde F}{\cosh s}
\right) \right) \right\|_{\mathcal C^{2, \alpha} (\mathbb R\times S^1)} \\[3mm]
\qquad \leq c_\kappa \, \left( \tau^{3/4} \, \| (\cosh s)^{-\delta} \, (w' -w) \|_{\mathcal
C^{2, \alpha} (\mathbb R \times S^1)} + \tau^{(1+ \delta)/4} \, \| f' -f\|_{\mathcal C^{2,
\alpha} (S^1)} \right) ,
\end{array}
\]
provided $w$ and $w'$ satisfy
\[
\| (\cosh s)^{-\delta} \, w \|_{\mathcal C^{2, \alpha} (\mathbb R \times S^1)}æ + \|
(\cosh s)^{-\delta} \, w' \|_{\mathcal C^{2, \alpha} (\mathbb R \times S^1)}æ\leq C \,
\tau^{(2 + \delta) /4} ,
\]
for some fixed constant $C >0$ (independent of $\kappa, \tau$ and $f$). Here $
\tilde F$ and $\tilde F'$ are respectively the harmonic extensions of the boundary
data $f$ and $f'$.

Now, we make use of the result of Theorem~\ref{th:5.1} to rephrase the problem as
a fixed point problem and the previous estimates are precisely enough to solve this
nonlinear problem using a fixed point agument for contraction mappings in the ball
of radius $C \, \tau^{(2 + \delta )/4}$ in $(\cosh s)^{\delta} \, \mathcal C^{2, \alpha}
(\mathbb R \times S^1)$, where $C >0$ is fixed large enough independent of $
\kappa$ provided $\tau$ is small enough. Then, for all $\tau >0$ small enough, we
find that there exists a constant $C>0$ (independent of $\kappa$) $C_\kappa >0$
(depending on $\kappa$) such that, for all functions $f$ satisfying (H1), (H2) and
(\ref{eq:estimf}), there exists a unique solution $w$ of (\ref{eq:mce2}) satisfying
\[
\| (\cosh s)^{\delta} \, w\|_{\mathcal C^{2, \alpha} (\mathbb R \times S^1)}æ\leq C \,
\tau^{(2 + \delta) /4} .
\]
In addition, we have the estimate
\[
\| e^{-\delta s} \, (w' - w ) \|_{ \mathcal C^{2, \alpha} (\mathbb R \times S^1)}æ\leq C_
\kappa \, \tau^{1/2} \, \| f' -f\|_{\mathcal C^{2, \alpha} (S^1)} ,
\]
where $w$ (resp. $w'$) is the solution associated to $f$ (resp. $f'$).

To complete the result, we simply shrink the surface we have obtained by a factor $
\tilde \tau$ to get a surface whose mean curvature is constant and equal to $1$.
The description of this surface close to its boundaries follows from the arguments
already used in the proof of Proposition~\ref{pr:4.4}. Observe that, the solution of
(\ref{eq:mce2}) is obtained through a fixed point theorem for contraction mappings,
and it is classical to check that the solution we obtain depends continuously on the
parameters of the construction. In particular, the constant mean curvature surface
we obtain depends continuously on $\tilde \tau$ (in fact one can also prove
that the surface depends smoothly on $\tilde \tau$, but we shall not use this
property).

To prove that, near its lower boundary, the surface we have obtained is a vertical
graph for some function which enjoys the decomposition (\ref{eq:5.200}), we make
use of the expansion in Lemma~\ref{le:5.11}) and we follow the steps of the
construction. Notice that $\bar U^\downharpoonright_{\tilde \tau,f}$ collects many
remainders : the one coming from the expansion in Lemma~\ref{le:5.11}, the
difference between $F$ and $\tilde F$, the function $w$ solution of the fixed point
problem and also the change of coordinates which takes into account that the
variable $s$ does not correspond to the cylindrical coordinates in $\mathbb R^2 - \{0\}
$.
\end{proof}

\section{The unit sphere}

\subsection{Notations and definitions}

We denote by $x_1,x_1,x_3$ the coordinates in $\mathbb R^3$. We agree that $
{\mathcal S}_j$ denotes the symmetry with respect to the $x_j = 0$ plane, and, for
all $m \in \mathbb N$ and that ${\mathcal R}_{m+1}$ denotes the rotation of angle
$\frac{2\pi}{m+1}$ about the $x_3$-axis. With slight abuse of notations, we will
keep the same notation to denote the restriction of these isometries to the
horizontal plane.

We define $z_0, \ldots, z_{m} \in S^1$ to be vertices of a regular polygon with $m
+1$ edges in the plane. Without loss of generality, we can choose
\[
z_0 : = (1,0) = e_1 \in \mathbb R^2 ,
\]
and, for $j=1, \ldots, m-1$, $z_{j+1} \in \mathbb R^2$ is the image of $z_j$ by $
\mathcal R_{m+1}$. In other words, if we identify the horizontal plane with $\mathbb
C$, the vertices of the polygon are exactly the $(m+1)$-th roots of unity. Recall that
the dihedral group of symmetries of $\mathbb R^2$ leaving this polygon fixed has
been denoted by ${\rm Dih}_{m+1}^{(2)}$. It is generated by $\mathcal R_{m+1}$
and $\mathcal S_2$.

Let $S^2$ be the unit sphere in $\mathbb R^3$. The upper half hemisphere of
$S^2$ can be parameterized by
\[
X^\upharpoonright (x) : = \left(x, \sqrt{1- |x|^2} \right) ,
\]
while the lower hemisphere is parameterized by
\[
X^\downharpoonright (x) : = \left( x, - \sqrt{1- |x|^2} \right) ,
\]
where, in both cases, $x \in D(0,1)$.

For all $\tau >0$ small enough, we set
\[
B^\upharpoonright : = X^\upharpoonright (D(0, \tau^{3/4})) ,
\]
and, for all $\rho >0$ satisfying
\begin{equation}
\frac{1}{C} \, \tau \leq \rho^2 \leq C \, \tau \, ,
\label{eq:rho}
\end{equation}
for some fixed constant $C >1$, we define
\[
B^\downharpoonright_j : = X^\downharpoonright (D( \rho \, z_j, \tau^{3/4})) ,
\]
for $j=0, \ldots, m$. We also define
\[
p^\upharpoonright : = X^\upharpoonright (0),
\]
to be the north pole of $S^2$ and, for $j=0, \ldots, m$, we define the points
\[
p^\downharpoonright_j : = X^\downharpoonright (\rho \, z_j) ,
\]
which are $m+1$ points arranged symmetrically near the south pole of $S^2$. By
construction $p^\downharpoonright_{j+1}$ is the image of $p^\downharpoonright_j
$ by $\mathcal R_{m+1}$.

\begin{definition}
We define $\mathfrak S_{\tau, \rho}$ to be the surface obtained by excising from
$S^2$, the sets $B^\upharpoonright $ and $B^\downharpoonright_j $, for $j=0,
\ldots, m$.
\end{definition}
Observe that, provided $\tau$ is chosen small enough, the surface $\mathfrak
S_{\tau, \rho}$ has $m+2$ boundaries. Moreover, this surface has been
constructed in such a way that it is invariant under the action of the dihedral group
${\rm Dih}_{m+1}^{(2)}$.

\subsection{The mean curvature of vertical graphs}

We recall some well known facts about the the mean curvature of vertical graphs in
$\mathbb R^3$. The mean curvature of the graph of the function $u$, namely the
surface parameterized by
\[
X(x) : = (x, u(x)) \in \mathbb R^3 ,
\]
where $x$ belongs to some open domain in $\mathbb R^2$, is given by
\[
M (u) := \frac{1}{2}æ\, \mbox{div} \left( \frac{\nabla u}{\sqrt{1+ |\nabla u|^2}} \right).
\]
Recall that the mean curvature is defined to be the average of the principal
curvatures and this explains the factor $1/2$.

It follows from this formula that the linearized mean curvature operator about the
graph of $u$ is given by
\[
\begin{array}{rllll}
DM (u) \, (v) & = & \displaystyle \frac{ \Delta v}{2 W} + \displaystyle \frac{3}{2 W^5}
\, ( \nabla u \cdot \nabla v) \, D^2 u \, [\nabla u, \nabla u] \\[3mm]
& - & \displaystyle \frac{1}{2 W^3} \, \left( ( \nabla u \cdot \nabla v) \, \Delta u + D^2
v \, [\nabla u , \nabla u ]
+ 2 \, D^2 u \, [\nabla u , \nabla v] \right) ,
\end{array}
\]
where
\[
W : = \sqrt{1+ |\nabla u|^2}æ,
\]
and where $D^2 f \, [æ\cdot, \cdot]$ is the second order differential of the function $f
$. One should be aware that $DM (u)$ is not the Jacobi operator $J_u$ about the
graph of the function $u$ since nearby surfaces are not parameterized as normal
graphs but are parameterized as vertical graphs over the horizontal plane. As
explained in \S 2, this operator and the Jacobi operator are conjugate and in fact,
assuming the the vertical graph is oriented to so that unit normal vector field points
upward, we have the relation
\[
X^* ( J_u \, w ) = DM(u) (W \, X^*w ) ,
\]
for any function defined on the graph of $u$.

Of interest will be the case where, for example,
\[
u (x) = \pm \sqrt{ 1 - |x|^2},
\]
where $x = (x_1, x_2) \in \mathbb R^2$. According to the sign chosen, the graph of
$u$ is the lower or the upper hemisphere of the sphere of radius $1$ centered the origin. In this
case, we have
\[
\nabla u (x) = \mp \frac{x}{\sqrt{1- |x|^2}}, \qquad \qquad \nabla^2 u (x) = \mp
\frac{ (1 - |x|^2) \, {\rm Id} + x \otimes x }{(1- |x|^2)^{3/2}} ,
\]
and
\[
\Delta u (x) = \mp \frac{2 - |x|^2}{(1- |x|^2)^{3/2}} .
\]
Using these, we find that the explicit expression of $DH(u)$ is given by
\begin{equation}
DM (u) \, w = \frac{1}{2} \, (1 - |x|^2)^{1/2} \, \left( \Delta w - \nabla^2 w \, (x,x) -
4 \, (x \cdot \nabla w) \right) ,
\label{eq:expl}
\end{equation}
in $D(0,1)$.

\subsection{Green's function}

Let $N_0$ denote the inward pointing unit normal vector field on $S^2$. We
consider an inward pointing vector field $N_0^\flat$ which is equal to $N_0$ close to the (horizontal)
equator of $S^2$ and which is equal to a vertical unit vector field close to the north
and south pole of $S^2$ (still pointing inward).

We define ${\mathbb L}$ to be the linearized mean curvature operator using the
vector field $N_0^\flat$. According to the analysis of \S 2, we can write
\begin{equation}
{\mathbb L} \, w : = \frac{1}{2} \, \left( \Delta_{S^2} +2 \right) ( N_0 \cdot N_0^\flat \,
w \, ) .
\label{eq:Letoile}
\end{equation}
We let $\Gamma_\rho$ be the unique solution of
\begin{equation}
{\mathbb L} \, \Gamma_\rho = - \pi \, \delta_{p^\upharpoonright} - \frac{\pi}{\sqrt {1-
\rho^2}} \, \frac{1}{m+1} \, (\delta_{p^\downharpoonright_0} + \ldots +
\delta_{p_{m}^\downharpoonright} ) ,
\label{eq:Gamma}
\end{equation}
which satisfies the orthogonality conditions
\[
\int_{S^2} x_i \, \Gamma_\rho \, {\rm dvol}_{S^2} = 0 ,
\]
for $i=1,2$ and $3$. Here $\delta_q$ is the Dirac mass at the point $q$. The
existence of $\Gamma_\rho$ is guarantied by the fact that the distribution on the
right hand side of (\ref{eq:Gamma}) is orthogonal to the cokernel of ${\mathbb L}$.
Indeed, the Jacobi operator is self-adjoint and its kernel and cokernel are equal
and spanned by the restriction of the coordinate functions to the unit sphere.
Thanks to (\ref{eq:Letoile}), we conclude that the cokernel of ${\mathbb L}$ is also
spanned by the restriction of the coordinate functions to the unit sphere. Now
\[
\langle x_1 , \delta_{p^\upharpoonright} \rangle_{\mathcal D, \mathcal D'} =
\langle x_2 , \delta_{p^\upharpoonright} \rangle_{\mathcal D, \mathcal D'} = 0 ,
\]
since both $x_1$ and $x_2$ vanish at the north pole of $S^2$ and
\[æ
\langle x_1 , \delta_{p^\downharpoonright_j} \rangle_{\mathcal D, \mathcal D'} =
\rho \, \cos \left( \frac{2\pi}{m+1} j \right)
\quad
\mbox{and}
\quad
\langle x_2 , \delta_{p^\downharpoonright_j} \rangle_{\mathcal D, \mathcal D'} =
\rho \, \sin \left( \frac{2\pi}{m+1} j \right) .
\]
Since
\[
\sum_{j=0}^m \cos \left( \frac{2\pi}{m+1} j \right) = \sum_{j=0}^m \sin \left(
\frac{2\pi}{m+1} j \right) = 0 ,
\]
we conclude that the distribution on the right hand side of (\ref{eq:Gamma}) is
orthogonal to the coordinate functions $x_1$ and $x_2$. Geometrically, this is
simply a consequence of the fact that the points $p_j^\downharpoonright$ are
symmetrically arranged around the $x_3$-axis. Finally, we have
\[
\langle x_3 , \delta_{p^\upharpoonright} \rangle_{\mathcal D, \mathcal D'} = 1,
\qquad
\mbox{and}
\qquadæ
\langle x_3 , \delta_{p^\downharpoonright_j} \rangle_{\mathcal D, \mathcal D'} =
\sqrt{1- \rho^2} ,
\]
and, again, we conclude that the distribution on the right hand side of
(\ref{eq:Gamma}) is orthogonal to the coordinate function $x_3$ thanks to the
choice of the constant in front of the Dirac masses at the points
$p_j^\downharpoonright$. Geometrically, this will have some interesting
consequence and can be interpreted as a conservation of the vertical flux of the
surfaces we try to construct. We shall return to this point later on. Finally, observe
that $\Gamma_\rho$ is invariant under the action of the elements of ${\rm Dih}_{m
+1}^{(2)}$.

The following result provides the expansion of the function $\Gamma_\rho$ close
to the north pole of $S^2$.
\begin{lemma}
The following expansion holds
\[
X^\upharpoonright \, ^* \Gamma_\rho ( x ) = - \log |x| + a^\upharpoonright +
{\mathcal O}_{\mathring{\mathcal C}^\infty} (|x|^2) ,
\]
in a fixed neighborhood of $0$, where the constant $a^\upharpoonright \in
\mathbb R$ depends smoothly on $\rho$ and is bounded as $\rho$ tends to $0$.
Moreover, the estimate on ${\mathcal O}_{\mathring{\mathcal C}^\infty} (|x|^2)$ is
uniform as $\rho$ tends to $0$.
\end{lemma}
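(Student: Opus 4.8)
The plan is to work near the north pole $p^\upharpoonright$ and to use the fact that $\Gamma_\rho$ solves a homogeneous elliptic equation away from the Dirac masses. First, observe that in a fixed small neighborhood of $p^\upharpoonright$ the vector field $N_0^\flat$ is vertical, so in the coordinate chart $X^\upharpoonright$ the operator $\mathbb L$ is conjugate (via Proposition~\ref{pr:2.2} and the relation $X^* ( J_u w ) = DM(u)(W\, X^* w)$) to the operator $DM(u)$ for $u(x) = \sqrt{1-|x|^2}$, whose explicit form is given in (\ref{eq:expl}):
\[
DM(u)\, w = \frac{1}{2}\,(1-|x|^2)^{1/2}\,\big(\Delta w - \nabla^2 w\,(x,x) - 4\,(x\cdot\nabla w)\big) .
\]
Away from the support of the Dirac masses, $\Gamma_\rho$ is $\mathbb L$-harmonic, hence (after conjugation) $W\, X^{\upharpoonright *}\Gamma_\rho$ solves $DM(u)\,v = 0$ in a punctured neighborhood of $0$; equivalently $X^{\upharpoonright *}\Gamma_\rho$ itself satisfies a linear second order elliptic equation with smooth coefficients that agrees with $\tfrac12\Delta$ to leading order at the origin. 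The single Dirac mass $-\pi\,\delta_{p^\upharpoonright}$ on the right hand side of (\ref{eq:Gamma}), together with the normalization (the factor $\tfrac12$ in the mean curvature operator, the Euclidean volume element, and the fact that $W(0)=1$, $N_0\cdot N_0^\flat = 1$ at the pole), is exactly calibrated so that the logarithmic singularity of $X^{\upharpoonright *}\Gamma_\rho$ at $0$ is $-\log|x|$ with coefficient $1$: indeed $\tfrac12\Delta(-\log|x|) = -\pi\,\delta_0$ in $\mathbb R^2$.

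Next, I would separate the singular part. Write $X^{\upharpoonright *}\Gamma_\rho(x) = -\log|x| + v(x)$ in the punctured neighborhood; then $v$ solves the inhomogeneous equation $\mathcal A v = g$, where $\mathcal A$ is the conjugated operator (elliptic, smooth coefficients) and $g = -\mathcal A(-\log|x|)$ away from $0$. Since $\mathcal A = \tfrac12\Delta + (\text{lower order and higher order terms vanishing at }0)$, the function $g$ has at worst a logarithmic singularity times a bounded smooth factor at the origin — in fact, examining (\ref{eq:expl}), the terms $\nabla^2 w\,(x,x)$ and $x\cdot\nabla w$ applied to $-\log|x|$ produce bounded (indeed $\mathcal O(1)$, hence $L^p$ for all $p$) contributions, so $g \in L^\infty_{\mathrm{loc}}$ near $0$, even continuous after accounting for cancellations. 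By interior elliptic regularity (Schauder once we know $g\in\mathcal C^{0,\alpha}$, which follows from the explicit structure), $v$ extends to a $\mathcal C^{2,\alpha}$ function across $0$; iterating (bootstrapping, using that the coefficients of $\mathcal A$ are smooth and that $g$ is smooth away from $0$ with the singular contributions fully accounted for), $v$ is in fact smooth near $0$. Denote $a^\upharpoonright := v(0)$.

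Finally, I would extract the $\mathcal O(|x|^2)$ claim and the uniformity in $\rho$. By parity: the sphere, the excised caps $B^\downharpoonright_j$ arranged symmetrically, and the Dirac data are all invariant under the dihedral group ${\rm Dih}_{m+1}^{(2)}$, and in particular under $\mathcal S_1$ and $\mathcal S_2$ — but more to the point, $-\log|x|$ and the operator $\mathcal A$ (which near $0$ depends on $x$ only through even-degree expressions, as visible in (\ref{eq:expl}): $|x|^2$, $x\otimes x$) are invariant under $x\mapsto -x$, so $v$ is an even function near $0$; hence $v(x) = a^\upharpoonright + \nabla^2 v(0)(x,x)/2 + \mathcal O(|x|^4)$, and the linear term drops, giving $X^{\upharpoonright *}\Gamma_\rho(x) = -\log|x| + a^\upharpoonright + \mathcal O_{\mathring{\mathcal C}^\infty}(|x|^2)$. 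For the smooth dependence on $\rho$ and the uniform bounds as $\rho\to 0$: the right hand side of (\ref{eq:Gamma}) depends smoothly on $\rho$ (the positions $p^\downharpoonright_j$ and the coefficient $(1-\rho^2)^{-1/2}$), the orthogonality conditions determining $\Gamma_\rho$ persist, and the solution operator of $\mathbb L$ modulo its cokernel is bounded; as $\rho\to 0$ the data converge (the south-pole Diracs coalesce but stay away from the north pole), so $\Gamma_\rho$ and all its derivatives on a fixed neighborhood of $p^\upharpoonright$ stay bounded, whence $a^\upharpoonright$ and the constants in the $\mathcal O_{\mathring{\mathcal C}^\infty}(|x|^2)$ remainder are uniformly controlled. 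The main obstacle is the careful verification that the coefficient of the logarithm is exactly $1$ and that the remainder $v$ is genuinely smooth (not merely $\mathcal C^{2,\alpha}$) at $0$ — this requires tracking the normalization constant $\pi$ through the conjugation of $\mathbb L$ to $DM(u)$ and checking that the singular source $g$ loses no regularity beyond what a bootstrap can recover, i.e. that the first-order and second-order terms in $\mathcal A$ hitting $-\log|x|$ contribute only $\mathcal C^\infty$-across-$0$ data after the expected cancellations.
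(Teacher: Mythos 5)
Your overall strategy is the paper's: subtract the explicit logarithm, observe that in the chart $X^\upharpoonright$ the operator is the vertical-graph linearization (\ref{eq:expl}) with $u=\sqrt{1-|x|^2}$, check that $\tfrac12(1-|x|^2)^{1/2}\bigl(\Delta-\nabla^2(\cdot)(x,x)-4\,x\cdot\nabla\bigr)$ applied to $-\log|x|$ produces exactly $-\pi\,\delta_0$ plus a smooth function (the paper computes it to be $\tfrac32\sqrt{1-|x|^2}$), conclude by elliptic regularity that the remainder is smooth across the pole, kill the linear term of its Taylor expansion by symmetry, and get uniformity in $\rho$ from the fact that the southern Dirac masses stay away from the north pole and the inverse of $\mathbb L$ (modulo cokernel) is bounded. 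Up to the symmetry step, this is essentially the published argument, just phrased as a bootstrap instead of an explicit computation.

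The symmetry step, however, is misargued. You claim the remainder $v$ is \emph{even} under $x\mapsto -x$, on the grounds that (i) the data are invariant ``in particular under $\mathcal S_1$ and $\mathcal S_2$'' and (ii) the operator and $-\log|x|$ are antipodally invariant. Point (ii) is a non sequitur: local invariance of the equation and of the singular part does not constrain the parity of a particular solution, which is determined by the global data (there are plenty of odd solutions of the homogeneous equation). Point (i) is false when $m+1$ is odd: $\mathcal S_1$, and hence the antipodal map $\mathcal S_1\circ\mathcal S_2$, belongs to ${\rm Dih}^{(2)}_{m+1}$ only when $m+1$ is even, and for $m+1$ odd the configuration $\{\rho\,z_j\}$ is not $\mathcal S_1$-invariant; correspondingly $v$ need not be even (its angular spectrum consists of multiples of $m+1$, and a mode like $\cos\bigl((m+1)\theta\bigr)$ is odd under the antipodal map when $m+1$ is odd — so also your stronger claim that the expansion is $a^\upharpoonright+\tfrac12\nabla^2v(0)(x,x)+\mathcal O(|x|^4)$ fails in general, e.g.\ for $m+1=3$). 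The conclusion you need is nevertheless correct and the repair is immediate, and it is exactly what the paper does: $\Gamma_\rho$ is invariant under ${\rm Dih}^{(2)}_{m+1}$, in particular under the rotation $\mathcal R_{m+1}$ fixing the pole, and no nonzero linear form $b\cdot x$ on $\mathbb R^2$ is invariant under a rotation of order $m+1\geq 2$; hence the coefficient $b^\upharpoonright$ of the linear term must vanish. You should replace the parity argument by this rotational-invariance argument.
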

\begin{proof}
We define the function $\Gamma_0$ on the upper hemisphere by
\[
X^\upharpoonright \, ^* \Gamma_0 (x) = -\log |x| ,
\]
and, using (\ref{eq:expl}), we compute
\[
X^\upharpoonright \, ^* ({\mathbb L} \, \Gamma_0 + \pi \, \delta_{p^
\upharpoonright} ) = \frac{3}{2} \, \sqrt{1 - |x|^2} .
\]
This immediately implies that, close to $p^\upharpoonright$, the function $
\Gamma_\rho - \Gamma_0$ is smooth. In particular, this function can be expanded
as
\[
X^\upharpoonright \, ^* (\Gamma_\rho - \Gamma_0) ( x ) = a^\upharpoonright +
b^\upharpoonright \cdot x + {\mathcal O}_{\mathring{\mathcal C}^\infty} (|x|^2) ,
\]
where $a^\upharpoonright \in \mathbb R$ and $b^\upharpoonright \in \mathbb
R^2$ depend smoothly on $\rho$ and remain bounded as $\rho$ tends to $0$.
Since the function $\Gamma_\rho$ is also invariant under the action of the
elements of ${\rm Dih}_{m+1}^{(2)}$, we conclude that necessarily $b^
\upharpoonright =0$. This completes the proof of the result.
\end{proof}

Near the other poles, the function $\Gamma_\rho$ also has an expansion which
we now describe. As can be suspected, this later description relies on the
expansion of the function
\[
G (x) : = - \sum_{j=0}^m \log | x - \rho \, z_j| ,
\]
at any of its singularities. Since this is a key point in our construction, we spend
some time to derive this expansion carefully. By symmetry, it is enough to expand
this function at $\rho \, z_0$. We change variables and write $x = \rho\, z_0 + y$.
We then expand
\[
\log | y - \rho \, (z_j - z_0)| = \log \rho + \log | z_j - z_0| + \frac{1}{\rho} \, \frac{z_0 -
z_j}{|z_0 - z_j|^2} \cdot y + {\mathcal O} \left( \frac{|y|^2}{\rho^2} \right) .
\]
Hence we find
\[
\begin{array}{rllll}
G ( \rho\, z_0 + y) & = & \displaystyle - \log |y| - m \, \log \rho - \sum_{j=1}^m \log |
z_j - z_0| \\[3mm]
& - & \displaystyle \frac{1}{\rho} \, \sum_{j=1}^m \frac{z_0 - z_j}{|z_0-z_j|^2} \cdot y +
{\mathcal O} \left( \frac{|y|^2}{\rho^2} \right) .
\end{array}
\]
It is easy to check that the following identity holds
\[
\sum_{j=1}^m \frac{z_0 - z_j}{|z_0-z_j|^2} = \frac{m}{2} \, z_0 .
\]
Setting
\[
a_0^\downharpoonright : = \sum_{j=1}^m \log | z_j - z_0| ,
\]
we can write
\[
G ( \rho\, z_0 + y) = - \log |x| - m \, \log \rho - a_0^\downharpoonright - \frac{m}{2
\rho} \, z_0 \cdot y + {\mathcal O} \left( \frac{|y|^2}{\rho^2} \right) .
\]
Similar estimates can be obtained for the partial derivatives of $G$. Finally,
observe that
\[
\Delta G = - 2 \, \pi \, \left( \delta_{\rho z_0} + \ldots + \delta_{\rho z_m}\right) .
\]

We now prove that, at $p_j^\downharpoonright $, the expansion of the function
$X^{\downharpoonright} \,^* \Gamma_\rho$ is (in some sense to be made precise)
close to the expansion of $G$ near $\rho \, z_j$. This is the content of the
following~:
\begin{lemma}
The following expansion holds
\[
\begin{array}{rllll}
X^\downharpoonright \, ^* \Gamma_\rho ( \rho \, z_j + y ) =\displaystyle -
\frac{1}{m+1} \, \left( \log |x|æ+ m \, \log \rho + a^\downharpoonright_{0,\rho} +
\frac{m}{2 \rho}æ \, z_j \cdot y \right) + \displaystyle {\mathcal
O}_{\mathring{\mathcal C}^\infty} (\tau^{1/2}) ,
\end{array}
\]
for $ |y| \in [\frac{1}{2} \, \tau^{3/4} , 2 \, \tau^{3/4}]$. Here
$a^\downharpoonright_{0,\rho} \in \mathbb R $ smoothly depends on $\rho >0$
and is uniformly bounded as $\rho$ tends to $0$.
\end{lemma}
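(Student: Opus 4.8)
The plan is to compare $X^\downharpoonright\,^*\Gamma_\rho$ near the south pole with the explicit model $\tfrac1{m+1}\,G$, whose expansion at $\rho z_j$ has just been derived. The first step is to compute $\mathbb L$ in the chart $X^\downharpoonright$ on a fixed neighbourhood of the south pole. There $N_0^\flat$ is the inward (hence upward) vertical field, so in the chart $N_0\cdot N_0^\flat=\sqrt{1-|x|^2}=W^{-1}$ (with $u=-\sqrt{1-|x|^2}$, $W=\sqrt{1+|\nabla u|^2}$). Since $\tfrac12(\Delta_{S^2}+2)$ is the Jacobi operator $J_u$ of the lower hemisphere, (\ref{eq:Letoile}) gives $\mathbb L w=J_u(N_0\cdot N_0^\flat\,w)$, so by the conjugation relation $X^\downharpoonright\,^*(J_u\,\psi)=DM(u)\,(W\,X^\downharpoonright\,^*\psi)$ of \S2 (applied with $\psi=N_0\cdot N_0^\flat\,w$) and by (\ref{eq:expl}) we get $X^\downharpoonright\,^*(\mathbb L w)=DM(u)(X^\downharpoonright\,^*w)$. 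Hence, writing $v:=X^\downharpoonright\,^*\Gamma_\rho$, equation (\ref{eq:Gamma}) reads, near the south pole and in the flat coordinate $x$,
\[
\Delta v-\nabla^2 v\,(x,x)-4\,(x\cdot\nabla v)\;=\;-\,\frac{2\pi}{(m+1)\sqrt{1-\rho^2}}\,\sum_{j=0}^m\delta_{\rho z_j}
\]
(the conformal factors $\sqrt{1-|x|^2}$ cancel on the two sides), so that the singular part of $v$ at $\rho z_j$ is $-\tfrac1{(m+1)\sqrt{1-\rho^2}}\log|x-\rho z_j|$.

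Next, set $w:=v-\tfrac1{m+1}\,G$. Since $\Delta G=-2\pi\sum_j\delta_{\rho z_j}$, the function $w$ solves, near the south pole,
\[
\Delta w\;=\;\nabla^2 v\,(x,x)+4\,(x\cdot\nabla v)+\frac{2\pi}{m+1}\Bigl(1-\tfrac1{\sqrt{1-\rho^2}}\Bigr)\sum_{j=0}^m\delta_{\rho z_j}\, .
\]
Two features are crucial. First, on the annuli $A_j:=\{\tfrac12\tau^{3/4}\le|x-\rho z_j|\le2\tau^{3/4}\}$ (which avoid all the $\rho z_j$) the coefficients $|x|^2$ and $|x|$ are $\mathcal O(\tau)$ and $\mathcal O(\tau^{1/2})$ by (\ref{eq:rho}), while $\nabla^2 v$ and $\nabla v$ are $\mathcal O(\tau^{-3/2})$ and $\mathcal O(\tau^{-3/4})$, and the Dirac term has coefficient $\mathcal O(\rho^2)=\mathcal O(\tau)$ and support off $A_j$; after rescaling $y=\tau^{3/4}\bar y$ the equation for $w$ on $A_j$ therefore has right hand side which is $\mathcal O(\tau)$ in $\mathcal C^{0,\alpha}$. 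Second --- and this is the delicate point --- one uses the standard description of $\Gamma_\rho$ near its collapsing cluster of $m+1$ singularities: $v$ equals $-\tfrac1{(m+1)\sqrt{1-\rho^2}}\sum_k E_{\rho z_k}$ plus a regular part bounded, together with its derivatives up to order two rescaled at the cluster scale, uniformly as $\rho\to0$, where $E_{x_0}(x)=\log|x-x_0|+\mathcal O_{C^0}(\rho^2)$ is the fundamental solution of $\Delta-\nabla^2(\cdot)(x,x)-4x\cdot\nabla$ at $x_0$. Combined with the identity $\sum_{k\neq j}(z_j-z_k)/|z_j-z_k|^2=\tfrac m2 z_j$ recorded above --- which makes the $\mathcal O(\rho^{-1})$ parts of $\nabla v$ and of $\nabla(\tfrac1{m+1}\sum_{k\neq j}\log|x-\rho z_k|)$ cancel at $\rho z_j$ --- this shows that $w$ is uniformly bounded on a neighbourhood of $A_j$ of size comparable to $\rho$ and that its gradient at $\rho z_j$ is $\mathcal O(1)$. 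Interior Schauder estimates on the rescaled annuli then yield
\[
w\;=\;c_\rho+\mathcal O_{\mathring{\mathcal C}^{2,\alpha}}(\tau^{1/2})\qquad\text{on }A_j,
\]
with $c_\rho\in\mathbb R$ bounded as $\rho\to0$, depending smoothly on $\rho$ (inherited from the smooth dependence of $\Gamma_\rho$, hence of the $E_{x_0}$ and of the regular part, on the parameters), and independent of $j$ by the ${\rm Dih}_{m+1}^{(2)}$-invariance of $\Gamma_\rho$.

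It remains to add $\tfrac1{m+1}G$ back and insert the expansion of $G$ at $\rho z_j$ obtained above, $G(\rho z_j+y)=-\log|y|-m\log\rho-a_0^\downharpoonright-\tfrac m{2\rho}z_j\cdot y+\mathcal O_{\mathring{\mathcal C}^\infty}(|y|^2/\rho^2)$, whose remainder is $\mathcal O(\tau^{1/2})$ on $A_j$; one also notes that replacing $\tfrac1{(m+1)\sqrt{1-\rho^2}}$ by $\tfrac1{m+1}$ everywhere costs only $\mathcal O(\tau|\log\tau|)$, since $1-\tfrac1{\sqrt{1-\rho^2}}=\mathcal O(\rho^2)=\mathcal O(\tau)$ multiplies quantities of size $\mathcal O(|\log\tau|)$ on $A_j$. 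Setting $a^\downharpoonright_{0,\rho}:=a_0^\downharpoonright-(m+1)\,c_\rho$ --- which is bounded and smooth in $\rho$ --- then gives exactly the claimed expansion (with $|x|$ in the statement read as $|x-\rho z_j|=|y|$, as in the expansion of $G$ above).

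The step I expect to be the main obstacle is the second feature used in the comparison paragraph: the \emph{uniform-in-$\rho$} control of $w$ near the cluster $\{\rho z_0,\dots,\rho z_m\}$, which collapses onto the south pole at rate $\rho\sim\tau^{1/2}$ while the annuli $A_j$ sit at the much smaller scale $\tau^{3/4}$. This requires a sufficiently precise and uniform description of the Green's function $\Gamma_\rho$ near its shrinking singular set --- essentially that it is a superposition of $m+1$ equal point masses plus a uniformly bounded regular part --- and a little care with the fact that the fundamental solution of the variable-coefficient operator $\Delta-\nabla^2(\cdot)(x,x)-4x\cdot\nabla$ differs from $\log|x-x_0|$ by a bounded but (at $x_0$) non-smooth term of size $\mathcal O(\rho^2)$, which is harmless on $A_j$. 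Everything else is routine elliptic estimates together with the bookkeeping of the conformal factors $\sqrt{1-|x|^2}$.
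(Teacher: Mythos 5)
Your overall route is the same as the paper's: pull everything back through $X^\downharpoonright$, use (\ref{eq:expl}) to identify $\mathbb L$ with $DM(u)$ near the south pole (the factors $N_0\cdot N_0^\flat=\sqrt{1-|x|^2}$ and $W=(1-|x|^2)^{-1/2}$ do cancel there), compare $X^\downharpoonright\,^*\Gamma_\rho$ with the explicit model built from $G$, and finish with the expansion of $G$ at $\rho z_j$ via the identity $\sum_{k\neq j}(z_j-z_k)/|z_j-z_k|^2=\tfrac m2\,z_j$. The genuine gap is exactly at the point you flag as the "delicate" one: you invoke, as a "standard description", a uniform-in-$\rho$ decomposition of $\Gamma_\rho$ near the collapsing cluster (sum of $m+1$ equal fundamental solutions plus a regular part controlled uniformly as $\rho\to0$). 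This is not a citable fact here; it is the actual content of the lemma, and the paper has to prove it. It does so by computing $X^\downharpoonright\,^*\bigl(\mathbb L\,\tilde\Gamma_\rho+\pi\sqrt{1-\rho^2}\,(\delta_{p_0^\downharpoonright}+\ldots+\delta_{p_m^\downharpoonright})\bigr)$ explicitly, splitting it into a smooth term, a term with first-order singularities of size $\rho\,|x-\rho z_j|^{-1}$, and a term with second-order singularities of size $\rho^2\,|x-\rho z_j|^{-2}$, and solving for the corresponding pieces $f^{(1)}_\rho, f^{(2)}_\rho, f^{(3)}_\rho$ of the difference: the first is smooth at the \emph{fixed} scale with $\nabla f^{(1)}_\rho(0)=0$ by ${\rm Dih}^{(2)}_{m+1}$-invariance (hence oscillation $\mathcal O(\rho\,\tau^{3/4})$ on the annulus), the second is handled by $W^{2,p}$ regularity giving a $\mathcal C^{0,\nu}$ modulus of continuity, and the third requires the uniform weighted estimate of Proposition~\ref{pr:6.2} (equivalently Proposition~\ref{pr:6.22}), whose uniformity in $\rho$ is itself proved by a three-scale blow-up argument. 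None of this machinery appears in your proposal, and without it the bounds $\nabla v=\mathcal O(\tau^{-3/4})$, $\nabla^2 v=\mathcal O(\tau^{-3/2})$ on $A_j$ that you use in the first step are also unjustified.

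There is in addition a quantitative problem with the form of uniformity you assume. Control of the regular part "with derivatives rescaled at the cluster scale" only gives a gradient bound $\mathcal O(1/\rho)$ near $\rho z_j$, hence an oscillation on $A_j$ of order $\tau^{3/4}/\rho\sim\tau^{1/4}$, which is too large for the claimed $\mathcal O(\tau^{1/2})$ remainder. Your assertion that the gradient of the regular part at $\rho z_j$ is $\mathcal O(1)$ does not follow from the identity $\sum_{k\neq j}(z_j-z_k)/|z_j-z_k|^2=\tfrac m2\,z_j$: that identity only accounts for the explicit logarithmic terms of the cluster, which you have already subtracted, and it is anyway what produces the retained term $\tfrac m{2\rho}\,z_j\cdot y$. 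The paper avoids this issue precisely because the only piece for which pointwise gradient information is used is $f^{(1)}_\rho$, which is smooth on a fixed neighbourhood with gradient vanishing at the origin by symmetry, while $f^{(2)}_\rho$ and $f^{(3)}_\rho$ are estimated in low-exponent H\"older and weighted norms (with $\nu=1/2$, $\mu=-1/2$) where no gradient bound is needed. To make your argument complete you would have to either reproduce this decomposition or prove your uniform cluster description with the stronger, fixed-scale control of the smooth part; as written, the key step is assumed rather than proved.
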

\begin{proof} Thanks to the invariance with respect to the action of ${\rm Dih}_{m
+1}^{(2)}$, it is enough to describe this expansion near the point
$p^\downharpoonright_0$. As in the proof of the previous Lemma we show that,
near the south pole of $S^2$, the function
$X^\downharpoonright \, ^* \Gamma_\rho $ is not too far from $G$. To this aim,
we define $\tilde \Gamma_\rho$ on the lower hemisphere of $S^2$ by
\[
X^\downharpoonright \, ^* \tilde \Gamma_\rho = G ,
\]
and, thanks to (\ref{eq:expl}), we can compute
\[
\begin{array}{llll}
X^\downharpoonright \, ^* \left( {\mathbb L} \, \tilde \Gamma_\rho +
\displaystyle {\pi} \, \sqrt{1-\rho^2} \, (\delta_{p^\downharpoonright_0} + \ldots +
\delta_{p^\downharpoonright_{m}} ) \right) = \displaystyle \frac{1}{2} \, \sqrt{1-
|x|^2} \, \times \\[3mm]
\qquad \qquad \displaystyle \sum_{j=0}^m \left( 3 - 2 \, \rho \, \frac{z_j \cdot (x-
\rho \, z_j)}{|x- \rho \, z_j|^2} + \frac{\rho^2}{|x-\rho \, z_j|^2}æ \, \left( 1 - 2 \, \frac{ (z_j
\cdot (x- \rho \, z_j))^2}{|x- \rho \, z_j|^2} \right) \right) .
\end{array}
\]
Observe that the right hand side contains three terms which have different
regularity properties. The first one is a smooth function which depends smoothly on
$\rho$ and which is invariant by rotation. The second function has a singularity of
order $1$ at each $\rho \, z_j$ and is bounded by a constant times $\rho \, |x- \rho \,
z_j|^{-1}$. Finally, the third function has a singularity of order $2$ at each $\rho \,
z_j$ and is bounded by a constant times $\rho^2 \, |x- \rho \, z_j|^{-2}$.

As a consequence, $ X^\downharpoonright \, ^* ( \tilde \Gamma_\rho -
\Gamma_\rho)$ can be decomposed into the sum of three functions which can be
analyzed independently. The first one $f^{(1)}_\rho$ is smooth in a fixed
neighborhood of $0$ and depends smoothly on the parameter $\rho$. This implies
that, near each $\rho \, z_j$, this function has a Taylor's expansion with coefficients
smoothly depending on $\rho$. Hence
\[
f_\rho^{(1)} (x) = f_\rho^{(1)} (\rho \, z_0) + \nabla f_\rho^{(1)}(\rho \, z_0) \cdot (x -
\rho\, z_0) + \mathcal O (| x- \rho \, z_0|^2) .
\]
Observe that $ \nabla f_\rho^{(1)}(0) =0$ and hence $| \nabla f_\rho^{(1)}(\rho \,
z_0)| \leq C \, \rho$. We conclude that $f_\rho^{(1)} (x) = f_\rho^{(1)} (\rho \, z_0) +
\mathcal O (\tau^{5/4})$ when $| x- \rho \, z_0| \in [\frac{1}{2} \, \tau^{3/4} , 2 \,
\tau^{3/4}]$.

Since $\sum_j |z-z_j|^{-1} \in L^p (D(0,1/2))$ for all $p \in (1,2)$, we find that the
second function $f^{(2)}_\rho \in W^{2,p} (D(0,1/3))$ and hence that it is continuous
near $\rho \, z_0$ and $ f_\rho^{(2)} (x) - f_\rho^{(2)} (\rho \, z_0)$ is bounded by a
constant times $\rho \, \sum_{j=0}^m | x- \rho \, z_j |^{\nu}$, for any given $\nu < 1$.
In particular, $f_\rho^{(2)} (x) = f_\rho^{(2)} (\rho \, z_0)+ \mathcal O (\tau^{(2+ 3
\nu )/4})$ when $| x- \rho \, z_0| \in [\frac{1}{2} \, \tau^{3/4} , 2 \, \tau^{3/4}]$.

Finally, using the result of Proposition~\ref{pr:6.2}, the third function $f^{(3)}_\rho$
is bounded by a constant times $\rho^2 \, \sum_{j=0}^m |x- \rho \, z_j|^{\mu}$, for
any $\mu \in (-1,0)$.

In particular, when $| x- \rho \, z_0| \in [\frac{1}{2} \, \tau^{3/4} , 2 \, \tau^{3/4}]$, we
find that the sum of these function can be decomposed as the sum of a constant
function (smoothly depending on $\rho$) and a function which is bounded by a
constant times $\tau^{1/2}$ (chose $\nu =1/2$ and $\mu =-1/2$). The statement
then follows at once. \end{proof}

It is interesting to observe that $\Gamma_\rho$ depends on $\rho>0$ since the
points $p^\downharpoonright_j$ do and, as $\rho$ tends to $0$, the sequence $
\Gamma_\rho$ converges on compacts to the unique solution of
\[
{\mathbb L} \, \Gamma_0 = - \pi \, \left( \delta_{p^\upharpoonright} +
\delta_{p^\downharpoonright} \right) ,
\]
which is $L^2$-orthogonal to the smooth kernel of $\Delta_{S^2} +2$. Recall that
$p^\upharpoonright$ denotes the north pole of $S^2$ and we now agree that
$p^\downharpoonright$ denotes the south pole of $S^2$.

\begin{remark}
For later use, it will be important to notice that all solutions of ${\mathbb L} \, w =0$
which are defined in $S^2- \{ p^\upharpoonright , p^\downharpoonright \}$, are
invariant under the action of ${\rm Dih}_{m+1}^{(2)}$ and are bounded by a
constant times ${\rm dist} (\cdot , \{ p^\upharpoonright ,
p^\downharpoonright \})^\nu$ for some $\nu \in (-1, 0)$, are linear combinations of
the functions $x_3$ and $\Gamma_0$.
\label{rem-m}
\end{remark}

We now summarize the above analysis. We set
\[
u^\upharpoonright (x) : = \sqrt{1 - |x|^2} ,
\quad \mbox{and} \quad
u^\downharpoonright (x) : = - \sqrt{1 - |x|^2} .
\]
Observe that, thanks to the previous results, we see that near $0$, the graph of the
function
\[
v^\upharpoonright : = u^\upharpoonright + \tau \, X^\upharpoonright \,
^* \Gamma_\rho ,
\]
can be expanded
\[
v^\upharpoonright ( x ) = \displaystyle 1 + \tau \, (m \, \log \rho +
a^\upharpoonright ) + \displaystyle \tau \, \log |x| +
{\mathcal O}_{\mathring {\mathcal C}^\infty} (\tau^{3/2}) ,
\]
for $|x| \in [\frac 1 2 \, \tau^{3/4} , 2 \, \tau^{3/4}]$, where $ a^\upharpoonright \in
\mathbb R$ smoothly depends on $\rho$. Moreover, we see that near $\rho \, z_j$,
the graph of the function
\[
v^\downharpoonright : = u^\downharpoonright + \tau \, X^\downharpoonright \,
^* \Gamma_\rho ,
\]
can be expanded
\[
\begin{array}{rllll}
v^\downharpoonright ( \rho \, z_j + y ) & = & \displaystyle - \sqrt{1-\rho^2} -
\frac{\tau}{m+1} \, (m \, \log \rho + a^\downharpoonright ) - \displaystyle
\frac{\tau}{m+1} \, \log |y| \\[3mm]
& - & \displaystyle æ\left( \rho - \frac{m}{m+1} \, \frac{\tau}{ 2 \rho}æ \right) \, z_j \cdot
y + {\mathcal O}_{\mathring {\mathcal C}^\infty} (\tau^{3/2}) ,
\end{array}
\]
for $|y| \in [\frac 1 2 \, \tau^{3/4} , 2 \, \tau^{3/4}]$, where $ a^\downharpoonright \in
\mathbb R$ smoothly depends on $\rho$. The key point in our construction is that
the constant in front of $z_j \cdot y$ can be adjusted by choosing $\rho$
appropriately. Indeed, if we define $\rho_0 >0$ by the identity
\[
2 \, (m+1) \, \rho_0^2 = m \, \tau ,
\]
then, when $\rho = \rho_0$, the constant in front of $z_j \cdot y$ in the last
expansion is exactly $0$ while choosing $\rho \neq \rho_0$ slightly larger or
smaller allows one to prescribe any value of this constant, close enough to $0$.

\subsection{Mapping properties of the Jacobi operator about a punctured sphere}

To begin with we define on $S^2$, the distance function to the punctures
$p^\upharpoonright , p_0^\downharpoonright , \ldots, p_m^\downharpoonright$
by
\[
d : = \mbox{dist}_{S^2} \left( \, \cdot , \{p^\upharpoonright ,
p_0^\downharpoonright , \ldots , p_m^\downharpoonright \} \right) .
\]
Even though this is not apparent in the notations, the function $d$ depends
implicitly on $\rho$ since it depends on the location of the points
$p_j^\downharpoonright$ which themselves do depend on $\rho$. We can define
some weighted spaces on
\[
S^* := S^2 - \{æp^\upharpoonright , p_0^\downharpoonright , \ldots,
p_m^\downharpoonright\} .
\]

For all $\nu \in \mathbb R$ and $k \in \mathbb N$ we define $\mathcal C^{k,
\alpha}_\nu (S^*)$ to be the space of functions $w \in \mathcal C^{k, \alpha}_{loc}
(S^*)$ for which the following norm is finite
\[
\begin{array}{rllll}
\| w \|_{\mathcal C^{k, \alpha}_\nu (S^*)} & : = & \displaystyle \sum_{j=0}^k \sup_{p
\in S^*} d^{-\nu +j} (p) \, \| \nabla^j w (p)\|_{g_{S^2}} \\[3mm]
& + & \displaystyle \sup_{\zeta \in (0, \pi/2)} \sup_{ d(p), d(q) \in [\zeta, 2\zeta]}
\zeta^{-\nu + k+ \alpha} \, \frac{\| \nabla^k w (p) -
\nabla^k w(q)\|_{g_{S^2}}}{ \mbox{dist}_{S^2} (p,q)^{\alpha}} .
\end{array}
\]
We further assume that the functions in $\mathcal C^{k, \alpha}_\nu (S^*)$ are
invariant under the action of ${\rm Dih}_{m+1}^{(2)}$. Again, observe that the
weighted spaces $\mathcal C^{k, \alpha}_\nu (S^*)$ do implicitly depend on $\rho
$.

We consider the operator
\[
\begin{array}{rcccllll}
{\mathbb L}_\nu : & \mathcal C^{2, \alpha}_\nu (S^*) & \longrightarrow
& \mathcal C^{0, \alpha}_{\nu -2} (S^*) \\[3mm]
& w & \longmapsto & {\mathbb L} \, w .
\end{array}
\]
It is easy to check that ${\mathbb L}_\nu$ is well defined.

Recall that ${\mathbb L}$ is conjugate to $\Delta_{S^2} +2$. When acting on
smooth function defined on $S^2$, the mapping properties of $\Delta_{S^2} +2$
are well understood and we recall that the kernel of this operator is spanned by the
restriction to $S^2$ of the linear forms on $\mathbb R^3$. Since we are assuming
that the functions we consider are invariant under the action of the dihedral group $
{\rm Dih}_{m+1}^{(2)}$, this implies that the bounded kernel of $\mathbb L$ has
dimension $1$. We now investigate the mapping properties of ${\mathbb L}$ (or
alternatively $\Delta_{S^2} +2$) when acting on functions belonging to the
weighted spaces we have just defined. We start with the~:
\begin{proposition}
Assume that $\nu \in (-1,0)$, then there exist constants $ C, \rho_0 >0$ only
depending on $\nu$ such that, for all $\rho \in (0, \rho_0)$, we have
\[
\| w\|_{\mathcal C^{2, \alpha}_\nu(S^*)} \leq C \, \| {\mathbb L}\, w\|_{\mathcal C^{0,
\alpha}_\nu(S^*)} ,
\]
for all functions $w$ in the $L^2(S^2)$-orthogonal complement of the functions
$x_3$ and $\Gamma_\rho$.
\label{pr:6.22}
\end{proposition}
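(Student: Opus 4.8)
The plan is to argue by contradiction, following the now-standard blow-up/normalization scheme adapted to the weighted spaces $\mathcal C^{k,\alpha}_\nu(S^*)$. Suppose the estimate fails. Then there exist a sequence $\rho_n \to 0$, together with functions $w_n$ in the $L^2(S^2)$-orthogonal complement of $x_3$ and $\Gamma_{\rho_n}$, normalized so that $\|w_n\|_{\mathcal C^{2,\alpha}_\nu(S^*)} = 1$ while $\|{\mathbb L}\, w_n\|_{\mathcal C^{0,\alpha}_{\nu-2}(S^*)} \to 0$. Because the weight $\nu$ is negative, the weighted norm controls $w_n$ away from the punctures and forces $w_n$ to be small near the punctures; the first task is to locate where the unit norm is concentrated. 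There is a point $p_n \in S^*$ realizing (up to a factor $1/2$) the supremum defining the norm, and we set $\zeta_n := d(p_n)$. Two regimes occur: either $\zeta_n$ stays bounded away from $0$ along a subsequence, or $\zeta_n \to 0$.

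In the first regime, the points $p_n$ stay in a fixed compact subset of $S^2 - \{p^\upharpoonright, p^\downharpoonright\}$ (recall that as $\rho_n \to 0$ all of $p_0^\downharpoonright,\ldots,p_m^\downharpoonright$ coalesce to the south pole $p^\downharpoonright$). Elliptic (Schauder) estimates for ${\mathbb L}$ on compact subsets, together with Ascoli--Arzel\`a, allow one to extract a subsequence converging in $\mathcal C^{2}_{loc}$ on $S^2 - \{p^\upharpoonright, p^\downharpoonright\}$ to a limit $w_\infty$ solving ${\mathbb L}\, w_\infty = 0$ there, with $\|w_\infty\|_{\mathcal C^{2,\alpha}_\nu} \le 1$, invariant under ${\rm Dih}_{m+1}^{(2)}$, and satisfying $w_\infty(p_\infty) \ne 0$ for the limit point $p_\infty$, hence $w_\infty \not\equiv 0$. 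The weighted bound with $\nu \in (-1,0)$ gives $|w_\infty| \le C\, {\rm dist}(\cdot, \{p^\upharpoonright, p^\downharpoonright\})^\nu$, so by Remark~\ref{rem-m} the function $w_\infty$ must be a linear combination of $x_3$ and $\Gamma_0$. On the other hand, the orthogonality conditions pass to the limit: $w_n \perp x_3$ and $w_n \perp \Gamma_{\rho_n}$, and since $\Gamma_{\rho_n} \to \Gamma_0$ on compacts (as noted in the text) with controlled behavior near the punctures, one gets $\int_{S^2} x_3\, w_\infty = \int_{S^2} \Gamma_0\, w_\infty = 0$. These two conditions force $w_\infty \equiv 0$, a contradiction. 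Passing the orthogonality to the limit and justifying the convergence $\Gamma_{\rho_n} \to \Gamma_0$ in a strong enough norm (uniformly near the coalescing punctures) is, I expect, the main technical obstacle in this regime, since the $\delta_{p_j^\downharpoonright}$ are collapsing.

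In the second regime $\zeta_n \to 0$, one rescales around $p_n$. If $p_n$ is near a puncture $q_n$, introduce geodesic normal coordinates $x$ centered at $q_n$ and set $\tilde w_n(x) := w_n(\zeta_n\, x)$ on an annular region $\{c^{-1} \le |x| \le R_n\}$ with $R_n \to \infty$ (or a ball if $p_n$ sits well inside the scale); the normalization $\|w_n\|_{\mathcal C^{2,\alpha}_\nu} = 1$ together with the definition of the weighted norm gives $\zeta_n^{-\nu}$-type bounds that, after scaling, yield a limit $\tilde w_\infty$ on $\mathbb R^2 \setminus \{0\}$ (or $\mathbb R^2$) which is \emph{harmonic} — because ${\mathbb L}$, conjugate to $\frac12(\Delta_{S^2}+2)$, scales to $\frac12\Delta$ in the blow-up — nontrivial, ${\rm Dih}$-invariant, and satisfies $|\tilde w_\infty(x)| \le C |x|^\nu$. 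Separating variables, the only harmonic functions on $\mathbb R^2 \setminus \{0\}$ with growth/decay rate $|x|^\nu$ for $\nu \in (-1,0)$ are multiples of $\log|x|$ (the $j=0$ mode; the rate $|x|^\nu$ with $-1 < \nu < 0$ excludes all $|x|^{\pm k}$, $k \ge 1$, and also the constant unless $\nu \ge 0$)—but $\log|x|$ is not $O(|x|^\nu)$ as $|x| \to 0$. On a full plane the only bounded-by-$|x|^\nu$ harmonic function is $0$. Either way $\tilde w_\infty \equiv 0$, contradicting nontriviality. One must be slightly careful that the concentration point $p_n$ could also sit at an intermediate scale between punctures; there the rescaled limit is harmonic on all of $\mathbb R^2$ and the conclusion is the same. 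Having reached a contradiction in every regime, the estimate holds, completing the proof.
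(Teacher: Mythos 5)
Your overall strategy (contradiction, normalization in the weighted norm, choice of a point realizing the norm, and a case analysis on the scale $d(p_n)$, with Remark~\ref{rem-m} and the orthogonality to $x_3$ and $\Gamma_0$ handling the fixed-scale regime) is the same as the paper's. But there is a genuine gap in your second regime: you omit the case where the concentration scale is comparable to $\rho_n$, i.e. $d(p_n)/\rho_n$ converges to a finite positive limit. In that case the blow-up does \emph{not} produce a function harmonic on all of $\mathbb R^2$, nor on $\mathbb R^2\setminus\{0\}$: rescaling by $\rho_n$ (around the south pole, which is the fixed point of the symmetry group) one obtains a nontrivial limit defined on $\mathbb R^2-\{r_0\,z_0,\ldots,r_0\,z_m\}$, with all $m+1$ punctures surviving at finite distance. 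Moreover, and this is the point your proposal misses, the weighted bound $|w|\leq C\,d^{\nu}$ with $\nu\in(-1,0)$ \emph{does} allow logarithmic singularities at each of these punctures, since $|\log|y||=o(|y|^{\nu})$ as $y\to 0$ when $\nu<0$. So one cannot conclude that the limit extends harmonically; one only gets $\Delta w=a\sum_{j}\delta_{r_0 z_j}$ in the sense of distributions, where the single coefficient $a$ comes from the ${\rm Dih}^{(2)}_{m+1}$-invariance (without the symmetry there would be genuine nonzero solutions, e.g. differences of logarithms at two punctures, and the estimate would fail). It is then the decay at infinity, forced by $\nu<0$, that kills $a$ and hence $w$. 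This interplay between the symmetry, $\nu>-1$ (singularities no worse than Dirac masses) and $\nu<0$ (decay at infinity) is exactly the content of the paper's third case, and your argument as written does not reach it.

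The same oversight shows up in your single-puncture case: you exclude $\log|x|$ on $\mathbb R^2\setminus\{0\}$ ``because it is not $O(|x|^{\nu})$ as $|x|\to 0$'', but for $\nu\in(-1,0)$ it \emph{is} $O(|x|^{\nu})$ near the origin; what rules it out (and rules out constants) is the required decay $O(|x|^{\nu})$ as $|x|\to\infty$. The conclusion in that case is still correct once the reason is fixed, and your remark about passing the orthogonality against $\Gamma_{\rho_n}$ to the limit is a fair point (the paper is equally brief there), but the missing intermediate-scale case with its multiple log singularities is the substantive defect to repair.
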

\begin{proof} As usual, thanks to Schauder's estimates, it is enough to prove that
\[
\|æd^{-\nu} \, w\|_{L^\infty (S^*)} \leq C \, \|æd^{2-\nu} \, {\mathbb L} \, w\|_{L^\infty
(S^*)} ,
\]
for all $\rho$ small enough.

As usual, the proof of this estimate is by contradiction. Assume that the estimate is
not true, then, there would exist a sequence $\rho_n$ tending to $0$ and a
sequence of functions $w_n$ such that
\[
\|æd^{-\nu} \, w_n \|_{L^\infty (S^*)} = 1
\qquad \mbox{and} \qquad \lim_{n \rightarrow \infty} \, \|æd^{2-\nu} \, {\mathbb L} \,
w_n\|_{L^\infty (S^*)} =0 .
\]
Moreover $w_n$ is invariant under the action of ${\rm Dih}_{m+1}^{(2)}$ and is
$L^2$-orthogonal to $x_3$ and $\Gamma_{\rho_n}$ (recall that $\Gamma_\rho
= \Gamma_{\rho_n}$ depends on $\rho_n$). Hence,
\begin{equation}
\int_{S^2} \, x_3 \, w_n \, {\rm dvol}_{S^2} =0 ,
\label{eq:or1}
\end{equation}
and
\begin{equation}
\int_{S^2} \, \Gamma_{\rho_n} \, w_n \, {\rm dvol}_{S^2} =0 .
\label{eq:or2}
\end{equation}

We choose a point $q_n \in S^*$ such that
\[
|w_n (q_n)| \geq 1/2 \, d^{\nu} (q_n) ,
\]
and we distinguish various cases according to the behavior of the sequence
$d(q_n)$. In each case, we rescale coordinates (using the exponential map) by $1/
d(q_n)$ and we use elliptic estimates together with Ascoli-Arzela's theorem to extract
from the sequence $\tilde w_n : = d^{-\nu} (q_n) \, w_n$ convergent subsequences.
Finally, we pass to the limit in the equation satisfied by $\tilde w_n$. If, for some
subsequence, $d(q_n)$ remains bounded away from $0$, we get in the limit a non
trivial solution of
\[
(\Delta_{S^2} + 2) \, w = 0 ,
\]
which is defined in $S^2 - \{p^\upharpoonright, p^\downharpoonright \}$, where we
recall that $p^\upharpoonright$ denotes the north pole and $p^\downharpoonright
$ denotes the south pole of $S^2$. Moreover, $w$ is bounded by a constant times
$({\rm dist}( p, \{p^\upharpoonright, p^\downharpoonright \}))^{\nu}$ and $w$ is
invariant under the action of ${\rm Dih}_{m+1}^{(2)}$. Finally, we can pass to the
limit in (\ref{eq:or1}) and (\ref{eq:or2}) and check that $w$ is $L^2$-orthogonal to
$x_3$ and $\Gamma_0 : = \lim_{n\rightarrow \infty} \Gamma_{\rho_n}$. It is easy to
check (see Remark~\ref{rem-m}) that this implies that $w \equiv 0$, which is a
contradiction.

The second case we have to consider is the case where $\lim_{n\rightarrow \infty}
d(q_n) = 0$ and $\lim_{n\rightarrow \infty} d(q_n) / \rho_n = + \infty$ or the case
where $\lim_{n\rightarrow \infty} d(q_n) / \rho_n = 0$. In either case, we obtain a
nontrivial solution of
\[
\Delta \, w = 0 ,
\]
in $\mathbb R^2 - \{0\}$ which is bounded by a constant times ${\rm dist} (\cdot,
\{0\} )^{\nu}$. It is easy to check that $w\equiv 0$ since $\delta \notin \mathbb Z$,
which is again a contradiction.

Finally, we consider the case where $\lim_{n\rightarrow \infty} d(q_n) / \rho_n$
exists. In this case, we obtain a nontrivial solution of
\[
\Delta \, w = 0 ,
\]
in $\mathbb R^2 - \{r_0 \, z_0, \ldots, r_0 \, z_{m}\}$, for some $r_0 >0$. Moreover,
we know that this solution is bounded by a constant times $({\rm dist} (\cdot, \{r_0
\, z_0, \ldots, r_0 \, z_{m}\}))^{\nu}$ and $w$ is also invariant under the action of $
{\rm Dih}_{m+1}^{(2)}$. Inspection of the behavior of $w$ at the points $r_0 \, z_j$
together with the fact that $\nu > -1$ and $w$ is invariant with respect to the action
of ${\rm Dih}_{m+1}^{(2)}$, implies that $w$ is a solution in the sense of
distributions of
\[
\Delta \, w = a \, \sum_{j=0}^m \delta_{r_0 \, z_j} ,
\]
for some $a \in \mathbb R$. Then, inspection of $w$ at infinity together with the fact
that $\nu <0$, implies that necessarily $a =0$ and hence $w \equiv 0$. This is
again a contradiction. Having reached a contradiction in each case, this completes
the proof of the result. \end{proof}

Thanks to the previous result, we can prove the~:
\begin{proposition}
Assume that $\nu \in (-1,0)$ is fixed. Then the operator ${\mathbb L}_\nu$ is
surjective and has a $2$ dimensional kernel spanned by the functions $x_3$ and $
\Gamma_\rho$. . Moreover, the right inverse of ${\mathbb L}_\nu$ which is
chosen so that its image is in the $L^2$-orthogonal complement of the kernel of $
{\mathbb L}_\nu$, has norm which is bounded independently of $\rho$ small
enough.
\label{pr:6.2}
\end{proposition}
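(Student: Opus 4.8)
The plan is to upgrade the \emph{a priori} estimate of Proposition~\ref{pr:6.22} to an isomorphism statement, just as Theorem~\ref{th:5.1} was obtained for the cateno\"{\i}d. First one records that $x_3$ and $\Gamma_\rho$ really do lie in $\ker \mathbb L_\nu$: the function $x_3$ is the bounded equivariant Jacobi field of $S^2$, so $\mathbb L x_3 = 0$ by Proposition~\ref{pr:2.2} together with (\ref{eq:Letoile}), and it is smooth and bounded, hence in $\mathcal C^{2,\alpha}_\nu(S^*)$; and $\mathbb L \Gamma_\rho = 0$ \emph{away from the punctures}, the Dirac masses in (\ref{eq:Gamma}) being invisible on $S^*$, while near each puncture $\Gamma_\rho = -\log d + \mathcal O(1)$ (up to the factor $\tfrac{1}{m+1}$ near the south poles), so $|\Gamma_\rho| \le C\, d^{\nu}$ and $|\nabla^j \Gamma_\rho| \le C\, d^{-j} \le C\, d^{\nu-j}$ as $d\to 0$ because $\nu<0$; thus $\Gamma_\rho \in \mathcal C^{2,\alpha}_\nu(S^*)$, and the two functions are independent, one being bounded and the other not. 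Since $\nu>-1$, functions in $\mathcal C^{2,\alpha}_\nu(S^*)$, as well as $x_3$ and $\Gamma_\rho$, belong to $L^2(S^2)$, so the orthogonality conditions defining $H := \{ w \in \mathcal C^{2,\alpha}_\nu(S^*) : w \perp_{L^2} x_3,\ w \perp_{L^2} \Gamma_\rho \}$ make sense and cut out a closed subspace. It then suffices to show that $\mathbb L|_H : H \to \mathcal C^{0,\alpha}_{\nu-2}(S^*)$ is onto: by Proposition~\ref{pr:6.22} it is injective with $\| w \|_{\mathcal C^{2,\alpha}_\nu} \le C\, \| \mathbb L w \|_{\mathcal C^{0,\alpha}_{\nu-2}}$ uniformly for $\rho \in (0,\rho_0)$, so its inverse is then the asserted right inverse of $\mathbb L_\nu$, with norm bounded independently of $\rho$; and if $\mathbb L w = 0$ with $w \in \mathcal C^{2,\alpha}_\nu(S^*)$, then $w$ minus its $L^2$-projection onto $\mathrm{span}\{x_3,\Gamma_\rho\}$ lies in $H \cap \ker \mathbb L_\nu$, hence vanishes, so $\ker \mathbb L_\nu = \mathrm{span}\{x_3,\Gamma_\rho\}$ is exactly two dimensional.

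For the surjectivity of $\mathbb L|_H$ I would argue by duality. Proposition~\ref{pr:6.22} already shows that the range of $\mathbb L|_H$ is closed (a sequence $w_n \in H$ with $\mathbb L w_n$ Cauchy is itself Cauchy in $\mathcal C^{2,\alpha}_\nu$). If the range were proper, there would be a nonzero functional $\psi$, bounded against $\mathcal C^{0,\alpha}_{\nu-2}(S^*)$, vanishing on it; testing against $\mathbb L v$ for $v$ smooth and compactly supported in $S^*$ — such a $v$ differs from an element of $H$ by an element of $\mathrm{span}\{x_3,\Gamma_\rho\}$, which $\mathbb L$ annihilates on $S^*$ — shows that $\psi$ solves $\mathbb L^{*}\psi = \tfrac{c}{2}(\Delta_{S^2}+2)\psi = 0$ in $\mathcal D'(S^*)$, hence, by interior elliptic regularity, is a smooth solution of $(\Delta_{S^2}+2)\psi = 0$ on $S^*$. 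Since $\psi$ is a bounded functional on $\mathcal C^{0,\alpha}_{\nu-2}(S^*)$ and $-\nu \in (0,1)$, it must decay faster than $d^{-\nu}$, hence vanish, at each puncture; by removability of isolated singularities it extends to a smooth, ${\rm Dih}^{(2)}_{m+1}$-invariant solution of $(\Delta_{S^2}+2)\psi = 0$ on all of $S^2$, hence to a multiple of $x_3$. But $x_3$ does not vanish at $p^\upharpoonright$, where it equals $1$; so $\psi \equiv 0$, a contradiction, and $\mathbb L|_H$ is onto. (One should be a little careful because smooth compactly supported functions are not dense in the H\"older space; the clean fix is to run the argument in the little-H\"older closures, or simply to replace this step by the explicit construction below.)

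That construction alternative is the usual patching scheme. Given $f \in \mathcal C^{0,\alpha}_{\nu-2}(S^*)$ invariant under ${\rm Dih}^{(2)}_{m+1}$, one solves the model equation in a punctured coordinate neighbourhood of each puncture, Fourier mode by mode in the angular variable — possible for \emph{every} mode since $\nu \notin \mathbb Z$, so no indicial root is hit — taking care, near the south pole, of the two relevant scales (the one on which the $m+1$ colliding punctures look like a single puncture, and the rescaled one on which they look like the fixed configuration $\{ r_0\, z_0, \ldots, r_0\, z_m \} \subset \mathbb R^2$), exactly as in the proof of Proposition~\ref{pr:6.22}. Patching these local solutions with cut-off functions produces $w_1 \in \mathcal C^{2,\alpha}_\nu(S^*)$ with $g := f - \mathbb L w_1$ a bounded H\"older function on $S^2$; one then adjusts the coefficient of $\log d$ in the local solve at $p^\upharpoonright$, which shifts $\int_{S^2} g\, x_3\, {\rm dvol}_{S^2}$ by a nonzero multiple of $x_3(p^\upharpoonright) = 1$ (a Green's-function computation of the type carried out for $\Gamma_\rho$), so that $\int_{S^2} g\, x_3 = 0$, solves $\mathbb L w_2 = g$ on $S^2$ — possible, equivariantly, since the equivariant cokernel of $\mathbb L$ on $S^2$ is spanned by $x_3$ — and finally subtracts from $w_1 + w_2$ its $L^2$-projection onto $\mathrm{span}\{x_3,\Gamma_\rho\}$.

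The main obstacle is exactly the surjectivity together with its uniformity as $\rho \to 0$: the south-pole punctures collide, the distance function $d$ and hence the spaces $\mathcal C^{k,\alpha}_\nu(S^*)$ themselves depend on $\rho$, and the local solvability near the south pole must be controlled simultaneously in the regime where the cluster looks like a single puncture and in the rescaled regime where it looks like $\{ r_0\, z_0, \ldots, r_0\, z_m \}$ — precisely the three-case bookkeeping already performed in the proof of Proposition~\ref{pr:6.22}. Once surjectivity is established, the uniform bound on the right inverse and the identification of the kernel are, as explained in the first paragraph, formal consequences of that \emph{a priori} estimate.
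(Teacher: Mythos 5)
Your proposal is correct in substance and its overall logic matches the paper's: everything is reduced to the uniform a priori estimate of Proposition~\ref{pr:6.22}, which gives injectivity on the $L^2$-orthogonal complement $H$ of ${\rm span}\{x_3,\Gamma_\rho\}$, the identification of the kernel, and the $\rho$-independent bound on the right inverse once surjectivity is known; note that this last point means surjectivity is only needed at each fixed $\rho$, so the uniformity-in-$\rho$ of your parametrix near the colliding punctures, which you single out as the main obstacle, is not actually required. Where you genuinely diverge is the surjectivity step. The paper argues more economically: for $f$ with compact support in $S^*$ it solves $\mathbb L\,\tilde w = f - a\,\delta_{p^\upharpoonright}$ distributionally on all of $S^2$, the constant $a$ being chosen so that the right hand side is orthogonal to $x_3$ (orthogonality to $x_1,x_2$ being automatic by equivariance), then sets $w=\tilde w - b\,\Gamma_\rho$ to achieve orthogonality to the kernel; since the Dirac mass is invisible on $S^*$ and produces only a logarithmic singularity, $w\in\mathcal C^{2,\alpha}_\nu(S^*)$ and $\mathbb L w = f$ on $S^*$, so Proposition~\ref{pr:6.22} applies, and general $f$ is reached by an exhaustion argument. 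Your adjustment of the coefficient of $\log d$ at $p^\upharpoonright$ is exactly the same mechanism (because $\mathbb L(\chi\,\Gamma_0)=-\pi\,\delta_{p^\upharpoonright}+\mbox{bounded}$, so the shift of $\int_{S^2} g\,x_3$ is $-\pi c\,x_3(p^\upharpoonright)\neq 0$), but packaged inside a local-parametrix-plus-global-solve scheme that treats general $f$ directly, at the cost of the mode-by-mode solves and the two-scale bookkeeping near the south pole which the paper's distributional trick avoids altogether; your duality variant is a further alternative, with the functional-analytic caveat you yourself flag (one should work in little-H\"older closures, or extract the decay of $\psi$ at the punctures directly from the boundedness of the functional on the weighted space). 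One small inaccuracy in the constructive route: $g=f-\mathbb L w_1$ is bounded only if the local solves are done with the exact operator in the chart (or the parametrix is iterated); a model-Laplacian solve leaves an error of size $d^{\nu}$, which is still harmless for the global step but not bounded as stated.
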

\begin{proof} The existence of a right inverse follows from the general theory
developed for example in \cite{Pac}. Nevertheless, we give here a self-contained
proof.

Let us assume that we are given a function $f \in \mathcal C^{0, \alpha} (S^*)$
which has compact support in $S^*$. Recall that the functions we are interested in
are invariant under the action of ${\rm Dih}^{(2)}_{m+1}$. We choose $a \in
\mathbb R$ so that $ f - a \, \delta_{p^\upharpoonright}$ is orthogonal to the
function $x_3$. In particular, this implies that we can solve
\[
\mathbb L \, \tilde w = f - a \, \delta_{p^\upharpoonright}
\]
and, choosing the constant $b \in \mathbb R$ appropriately, we can assume that
$w : = \tilde w - b \, \Gamma_\rho$ is $L^2$-orthogonal to the function $x_3$ and $
\Gamma_\rho$. Observe that
\[
\mathbb L \, w = f ,
\]
in $S^*$ and also that $w \in \mathcal C^{2, \alpha}_\nu (S^*)$. In particular the
result of Proposition~\ref{pr:6.22} applies and we have
\[
\| w\|_{\mathcal C^{2, \alpha}_\nu(S^*)} \leq C \, \| {\mathbb L}\, w\|_{\mathcal C^{0,
\alpha}_\nu(S^*)} .
\]
The general result, when $f$ is not assumed to have compact support in $S^*$ can
be handled as usual using a sequence of functions having compact support and
converging on compacts to a given function in $\mathcal C^{0, \alpha}_\nu(S^*)$.
\end{proof}

\subsection{A third fixed point argument}

Assume that we are given $\tau , \tilde \tau >0$ small enough and satisfying
\begin{equation}
| \tilde \tau - \tau | \leq \kappa \, \tau^{3/2} ,
\label{eq:ttt}
\end{equation}
where the constant $\kappa >0$ is fixed large enough and will be fixed in the last
section of the paper. We also assume that $\rho >0$ satisfies
\begin{equation}
\left| \rho - \frac{m}{m+1} \, \frac{\tau}{ 2 \rho}æ \right| \leq \kappa \, \tau^{3/4} .
\label{eq:6.30}
\end{equation}

We prove the existence of an infinite dimensional family of constant mean
curvature surfaces which are close to $\mathfrak S_{\tau, \rho}$ are parameterized
by their boundary values described by two functions $f^\upharpoonright : S^1
\longrightarrow \mathbb R $ and $f^\downharpoonright : S^1 \longrightarrow
\mathbb R$. The surfaces also depend on $\tilde \tau$ and $\rho$ satisfying the
above estimates.
\begin{proposition}
Assume we are given $\kappa >0$ large enough (the value of $\kappa$ will be
fixed later on). For all $\tau, \tilde \tau >0$ small enough satisfying (\ref{eq:ttt}) and
for all functions $f^\upharpoonright$ which are invariant under the action of the
dihedral group ${\rm Dih}^{(2)}_{m+1}$ and $f^\downharpoonright$, which are
invariant under the action of $\mathcal S_2$, both satisfying (H1)
and
\[
\|æf \|_{{\mathcal C}^{2,\alpha} (S^1)} \leq \kappa \, \tau^{3/2} ,
\]
there exists a constant mean curvature surface $\mathfrak S_{\tilde \tau , \rho ,
f^\upharpoonright, f^\downharpoonright}$ which is a graph over $\mathfrak S_{\tau,
\rho}$, has $m+2$ boundaries (one boundary close to the north pole and $m+1$
boundaries close to the south pole) and is invariant under the action the dihedral
group ${\rm Dih}_{m+1}^{(2)}$. Close to the upper boundary, the surface $
\mathfrak S_{\tilde \tau ,\rho , f^\upharpoonright, f^\downharpoonright}$ a {\em
vertical graph} over the annulus
\[
\{ x \in \mathbb R^2 \, : \, \tau^{3/4} \leq |x | \leq 2 \, \tau^{3/4} \, \} ,
\]
for some function $x \longmapsto V^\upharpoonright_{\tilde \tau , \rho ,
f^\upharpoonright, f^\downharpoonright} ( \tau^{-3/4} \, x )$ which can be expanded
as follows
\begin{equation}
V^\upharpoonright_{\tilde \tau , \rho, f^\upharpoonright, f^\downharpoonright} (x) =
\displaystyle 1 + \tilde \tau \, (m \, \log \rho + a^\upharpoonright_{\tilde \tau , \rho ,
f^\upharpoonright, f^\downharpoonright} ) + \frac{3}{4} \,
\tilde \tau \, \log \tau + \tilde \tau \, \log |x| + W^{\rm out}_{f^\upharpoonright} (x) +
\bar V^\upharpoonright_{\tilde \tau , \rho , f^\upharpoonright, f^\downharpoonright}
( x ) ,
\label{eq:6.31}
\end{equation}
where $a^\upharpoonright \in \mathbb R$, $W^{\rm out}_{f}$ denotes the bounded
harmonic extension of $f$ in $\mathbb R^2 - \overline D(0,1)$ and where
\begin{equation}
\| \bar V^\upharpoonright_{\tilde \tau , \rho , 0, 0} \|_{\mathcal C^{2, \alpha}
( \overline D(0, 2) - D (0, 1) )} \leq C \, \tau^{3/2} ,
\label{eq:6.321}
\end{equation}
and, , given $\nu \in (-1,0)$,
\begin{equation}
\begin{array}{llll}
\| \bar V^\upharpoonright_{\tilde \tau , \rho , f^\upharpoonright,
f^\downharpoonright } - \hat V^\upharpoonright_{\tilde \tau , \rho , f^\upharpoonright
\, ', f^\downharpoonright \, '} \|_{\mathcal C^{2, \alpha} ( \overline D(0, 1) - D (0,
1/2) )} \qquad \qquad \\[3mm]
\qquad \qquad \leq \, C \, \tau^{(1+\nu)/4} \, (\| f^\upharpoonright \, ' -
f^\upharpoonright \|_{\mathcal C^{2, \alpha} (S^1)} + \| f^\downharpoonright \, ' -
f^\downharpoonright \|_{\mathcal C^{2, \alpha} (S^1)} ),
\end{array}
\label{eq:6.32}
\end{equation}
for some constant $C >0$ independent of $\kappa$, $
\tilde \tau$ and $f^\upharpoonright, f^\downharpoonright, f^\upharpoonright \, ' ,
f^\downharpoonright \, '$.

Near one of the lower boundaries the surface $\mathcal S_{\tilde \tau , \rho ,
f^\upharpoonright, f^\downharpoonright}$ is a {\em vertical graph} over the annulus
\[
\{ x \in \mathbb R^2 \, : \, \tau^{3/4} \leq |x - \rho \, z_0| \leq 2 \, \tau^{3/4} \, \}
\]
for some function $x \longmapsto V^\downharpoonright_{\tilde \tau , \rho,
f^\upharpoonright, f^\downharpoonright}(\tau^{3/4} \, (x - \rho \, z_0))$ which can be
expanded as follows
\begin{equation}
\begin{array}{rlllll}
V^\downharpoonright_{\tilde \tau , \rho , f^\upharpoonright, f^\downharpoonright}
(x) & = & \displaystyle - \sqrt{1-\rho^2} - \frac{\tilde \tau}{m+1} \, (m \, \log \rho + a^
\downharpoonright_{\tilde \tau , \rho , f^\upharpoonright, f^\downharpoonright}
) - \displaystyle \frac{3\tilde \tau}{4(m+1)} \, \log \tau \\[3mm]
& - & \displaystyle \frac{\tilde \tau}{m+1} \, \log |x| - \displaystyle \tau^{3/4}\,
æ\left( \rho - \frac{m}{m+1} \, \frac{\tilde \tau}{ 2 \rho}æ \right) \, z_0 \cdot x \\[3mm]
& + & W^{\rm out}_{f^\downharpoonright} (x) + \bar V^\downharpoonright_{\tilde
\tau , f^\upharpoonright, f^\downharpoonright} (x ) ,
\end{array}
\label{eq:6.33}
\end{equation}
where $\bar V^\downharpoonright_{\tilde \tau , \rho , f^\upharpoonright,
f^\downharpoonright} $ enjoys properties similar to the one described above for $
\bar V^\downharpoonright_{\tilde \tau , \rho , f^\upharpoonright,
f^\downharpoonright} $. Moreover, both depend continuously on $\tilde \tau$ and $
\rho$.
\label{pr:6.3}
\end{proposition}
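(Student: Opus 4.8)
The plan is to follow the same scheme used in the proofs of Proposition~\ref{pr:4.4} and Proposition~\ref{pr:5.1}: set up the equation for a vertical graph over the punctured sphere $\mathfrak S_{\tau,\rho}$, linearize about the approximate solution $v^\upharpoonright$ (resp. $v^\downharpoonright$), use the uniformly bounded right inverse of ${\mathbb L}_\nu$ furnished by Proposition~\ref{pr:6.2}, and close a fixed point argument for a contraction mapping in a suitable ball of a weighted H\"older space. First I would modify the inward unit normal $N_0$ into $N_0^\flat$ near the $m+2$ boundaries (vertical near each puncture, equal to $N_0$ near the equator), so that, close to the boundaries, surfaces are described as vertical graphs and the relevant linearized operator is ${\mathbb L}$ as in \eqref{eq:Letoile}; by Proposition~\ref{pr:2.2} the nonlinear mean curvature operator then takes the form $\bar H(w) = 1 + {\mathbb L}\,w + \ell\, w + Q(w)$, where $\ell$ is supported near the boundaries and bounded (in $\mathcal C^\infty$) by a constant times $\tau^{1/2}$ because $N_0 \cdot N_0^\flat = 1 + {\mathcal O}(\tau^{1/2})$ there, and $Q$ is quadratic with the usual Lipschitz estimates.

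The approximate solution is the graph of $v^\upharpoonright = u^\upharpoonright + \tau\, X^\upharpoonright{}^*\Gamma_\rho$ over the part of $D(0,1)$ that remains after excising the discs, glued with the boundary data $f^\upharpoonright$ near the north puncture through the harmonic extensions $W^{\rm out}_{f^\upharpoonright}$ (and symmetrically $v^\downharpoonright$, $f^\downharpoonright$, $W^{\rm out}_{f^\downharpoonright}$ near the south punctures). By construction of $\Gamma_\rho$ via \eqref{eq:Gamma}, ${\mathbb L}$ applied to this approximate solution produces, besides the Dirac masses which are cancelled by the punctures, only the smooth remainder $\tfrac32\sqrt{1-|x|^2}\,\tau$ coming from the computation $X^\upharpoonright{}^*({\mathbb L}\,\Gamma_0 + \pi\delta_{p^\upharpoonright}) = \tfrac32\sqrt{1-|x|^2}$, together with quadratic-in-$\tau$ terms; hence the error term to be corrected is of size ${\mathcal O}(\tau^{3/2})$ in the appropriate weighted norm once the harmonic extensions are taken into account, and it is here that the precise choices \eqref{eq:ttt} and \eqref{eq:6.30} enter: they guarantee that replacing $\tau$ by $\tilde\tau$ in the curvature equation and adjusting $\rho$ only perturbs the error by ${\mathcal O}(\tau^{3/2})$ and, crucially, that the coefficient of $z_0\cdot x$ in \eqref{eq:6.33} can be made as small as $\kappa\,\tau^{3/4}$. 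I would then rephrase the equation ${\mathbb L}\,w = -(\text{error}) - \ell\,(v+w) - Q(v+w)$ as a fixed point problem, using the right inverse from Proposition~\ref{pr:6.2} whose norm is bounded independently of $\rho$, and solve it in the ball of radius $C\tau^{3/2}$ in $\mathcal C^{2,\alpha}_\nu(S^*)$ for $\nu\in(-1,0)$; the estimate on $\ell$ being ${\mathcal O}(\tau^{1/2})$ is exactly what makes the linear part a small perturbation, and the Lipschitz dependence on $(f^\upharpoonright, f^\downharpoonright)$ giving \eqref{eq:6.32} follows by subtracting the equations for two data sets, exactly as in Proposition~\ref{pr:4.4}. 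Continuity in $\tilde\tau$ and $\rho$ is automatic from the contraction mapping construction.

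The expansions \eqref{eq:6.31} and \eqref{eq:6.33} are then read off by restricting the solution to the annular regions $\tau^{3/4}\le|x|\le 2\tau^{3/4}$ (resp. $\tau^{3/4}\le|x-\rho z_0|\le 2\tau^{3/4}$), inserting the expansions of $X^\upharpoonright{}^*\Gamma_\rho$ and $X^\downharpoonright{}^*\Gamma_\rho$ already derived in \S 6.3, and bookkeeping that $u^\upharpoonright = 1 + {\mathcal O}(|x|^2) = 1 + {\mathcal O}(\tau^{3/2})$ on this annulus, that $\tau\log|x| = \tfrac34\tau\log\tau + \tau\log(\tau^{-3/4}|x|)$ accounts for the $\tfrac34\tilde\tau\log\tau$ term, and that the harmonic extension of the boundary data in the cylinder variable $s$ is related to $W^{\rm out}_{f}$ as in the change-of-variables discussion at the end of the proof of Proposition~\ref{pr:4.4}; all the leftover terms are absorbed into $\bar V^\upharpoonright$ and $\bar V^\downharpoonright$. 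The main obstacle, as signalled in the introduction (see the remark about \S 6 and the polygon of shrinking size), is controlling the function $\Gamma_\rho$ and the error it generates uniformly as $\rho\to 0$: the right inverse of ${\mathbb L}_\nu$ must have norm bounded independently of $\rho$ even though the punctures $p^\downharpoonright_j$ collide, which is precisely the content of Proposition~\ref{pr:6.2} and its underlying Proposition~\ref{pr:6.22}, and the delicate term-by-term analysis of $\tilde\Gamma_\rho - \Gamma_\rho$ into the three pieces $f^{(1)}_\rho, f^{(2)}_\rho, f^{(3)}_\rho$ with their different regularities (done in the last Lemma of \S 6.3) is what makes the ${\mathcal O}(\tau^{1/2})$ error bounds legitimate; everything else is a routine, if lengthy, repetition of the earlier fixed point arguments.
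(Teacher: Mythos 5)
Your proposal follows essentially the same route as the paper: the surface is sought as the graph of $\tilde\tau\,\Gamma_\rho$ plus the cut-off harmonic extensions $\hat F$ of the boundary data plus a correction $w$, the equation is solved by a contraction mapping in $\mathcal C^{2,\alpha}_\nu(S^*)$, $\nu\in(-1,0)$, using the $\rho$-uniform right inverse of Proposition~\ref{pr:6.2}, and the expansions (\ref{eq:6.31}) and (\ref{eq:6.33}) are read off from the expansions of $\Gamma_\rho$ near the punctures. One small correction: since ${\mathbb L}(\tilde\tau\,\Gamma_\rho)=0$ away from the punctures, the term $\tfrac32\,\tilde\tau\,\sqrt{1-|x|^2}$ does not enter the error of the fixed point problem (it only appears when comparing $\Gamma_\rho$ with the local logarithm $\Gamma_0$ in the expansion lemma); the error consists solely of ${\mathbb L}\hat F$ and the quadratic terms, which is what makes your claimed $\mathcal O(\tau^{3/2})$ bound (the paper uses the slightly smaller radius $\tau^{(6-3\nu)/4}$) legitimate.
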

\begin{proof}
Again the arguments of the proof are close to the one already performed in the
previous sections. The equation we try to solve can be written formally as
\begin{equation}
{\mathbb L} ( \tilde \tau \, \Gamma_\rho + \hat F + w ) = Q (\tilde \tau \, \Gamma_\rho
+ \hat F + w)
\label{rede}
\end{equation}
where $Q$ collects all the nonlinear terms. Here $\hat F$ is a function which can
be described as follows~: near the north pole $p^\upharpoonright$
\[
X^\upharpoonright \, ^* \hat F (x) = \chi \, W^{\rm out}_{f^\upharpoonright}
(\tau^{-3/4} \, x) ,
\]
where $\chi$ is a cutoff function identically equal to $1$ in $D(0, 1/4)$ and
identically equal to $1$ outside $D(0, 1/2)$. Near the south pole
$p^\downharpoonright$
\[
X^\downharpoonright \, ^* \hat F (x) = \sum_{j= 0}^m \bar \chi \left( \frac{x - \rho \,
z_j}{\rho} \right) \, W^{\rm out}_{f^\upharpoonright} (x - \rho \, z_j) ,
\]
where $\bar \chi$ is a cutoff function identically equal to $1$ in $D(0, c)$ and
identically equal to $1$ outside $D(0, c/2)$. Here $c = \sin (\pi /(m+1))$ so that the
balls of radius $c$ centered at the points $z_j$, for $j=0, \ldots, m$ are disjoint.

We choose
\[
\hat {\mathcal E}_\tau : \mathcal C^{0, \alpha} (\mathfrak S_{\tau,
\rho})æ\longrightarrow \mathcal C^{0, \alpha} (S^*) ,
\]
an extension operator such that
\[
\left\{
\begin{array}{rllll}
\hat {\mathcal E}_\tau (\psi) & = & \psi \qquad & \mbox{in}æ\qquad \mathfrak S_{\tau,
\rho}æ \\[3mm]
\hat {\mathcal E}_\tau (\psi) & = & 0 \qquad & \mbox{in} \qquad X^\upharpoonright
(D(0, \tau^{3/4}/2)) \cup \, \displaystyle {\bigcup}_{j=0}^m X^\downharpoonright
(D(\rho\, z_j, \tau^{3/4}/2)),
\end{array}
\right.
\]
and
\[
\| \hat{\mathcal E}_\tau (\psi) \|_{\mathcal C^{0, \alpha}_\nu (S^* ) } \leq C \, \| \psi
\|_{\mathcal C^{0, \alpha}_\nu (\mathfrak S_{\tau, \rho}) } .
\]
By definition, the norm in the space $\mathcal C^{0, \alpha}_\nu (\mathfrak S_{\tau,
\rho})$ is defined exactly as the norm in $\mathcal C^{0, \alpha}_\nu (S^*)$ but
points are restricted to $\mathfrak S_{\tau, \rho}$ instead of $S^*$.

We rewrite (\ref{rede}) as
\begin{equation}
\mathbb L\, w = \hat {\mathcal E}_\tau \, \left( - \mathbb L \, \hat F + Q \left( \tilde
\tau \, \Gamma_\rho + \hat F + w \right) \right) .
\label{eq:mce2}
\end{equation}
Observe that, by construction $\mathbb L \, ( \tilde \tau \, \Gamma_\rho) = 0$ away
from the singular points.

Again, on the right hand side it is understood that we consider the image by $\hat
{\mathcal E}_\tau $ of the restriction of the functions to $\mathfrak S_{\tau, \rho}$.

We assume that $\nu \in (-1,0)$ is fixed. It is easy to check that there exists a
constant $c >0$ (independent of $\kappa$) and a constant $c_\kappa >0$
(depending on $\kappa$) such that
\[
\left\| \hat {\mathcal E}_\tau \left( Q \left( \tilde \tau \, \Gamma_\rho \right) \right)
\right\|_{\mathcal C^{2, \alpha}_{\nu -2} (S^*) } \leq c \, \tau^{(6 -3\nu)/4} ,
\]
\[
\left\| \hat {\mathcal E}_\tau \left( \mathbb L \, \hat F \right) \right\|_{\mathcal C^{2,
\alpha}_{\nu -2} (S^*) } \leq c \, \tau^{(1-2\nu)/4} \, \left( \|æf^\upharpoonright
\|_{\mathcal C^{2, \alpha} (S^1)} + \|æf^\downharpoonright \|_{\mathcal C^{2, \alpha}
(S^1)} \right) ,
\]
and
\[
\begin{array}{llll}
\displaystyle \left\| \hat {\mathcal E}_\tau \left( Q \left( \tilde \tau \, \Gamma_\rho +
\hat F' + w' \right) - Q \left( \tilde \tau \, \Gamma_\rho + \hat F + w \right) \right)
\right\|_{\mathcal C^{2, \alpha}_{\nu -2} (S^*)} \\[3mm]
\quad \leq c_\kappa \, \left( \tau \, \| w' -w \|_{\mathcal C^{2, \alpha}_\nu (S^*)} +
\tau^{(4-3\nu)/4} \, \left( \|æf^\upharpoonright \, ' - f^\upharpoonright \|_{\mathcal
C^{2, \alpha} (S^1)} + \|æf^\downharpoonright \, '- f^\downharpoonright \|_{\mathcal
C^{2, \alpha} (S^1)} \right) \right)
\end{array}
\]
provided $w$ and $w'$ satisfy
\[
\| w \|_{\mathcal C^{2, \alpha}_\nu (S^*)}æ + \| w' \|_{\mathcal C^{2, \alpha}_\nu
(S^*)}æ\leq C \, \tau^{(6-3\nu)/4} ,
\]
for some fixed constant $C >0$ indendent of $\kappa$. Here $\hat F$ and $\hat F'$
are respectively associated to the harmonic extensions of the boundary data $
f^\upharpoonright, f^\downharpoonright$ and $f^\upharpoonright \, ',
f^\downharpoonright \, '$.

Now, we make use of the result of Proposition~\ref{pr:6.2} to rephrase the problem
as a fixed point problem and the previous estimates are precisely enough to solve
this nonlinear problem using a fixed point agument for contraction mappings in the
ball of radius $C_\kappa \, \tau^{(6-3\nu)/4}$ in $\mathcal C^{2, \alpha}_\nu (S^*)$,
where $C_\kappa$ is fixed large enough. Then, for all $\tau >0$ small enough, we
find that there exists a constant $C_\kappa >0$ (depending on $\kappa$) such
that, for all functions $f^\upharpoonright, f^\downharpoonright$ satisfying the above
hypothesis, there exists a solution $w$ of (\ref{rede}) satisfying
\[
\| w\|_{\mathcal C^{2, \alpha}_\nu (S^*)}æ\leq C \, \tau^{(6-3 \nu)/4} .
\]
In addition, we have the estimate
\[
\| w' - w \|_{ \mathcal C^{2, \alpha}_\nu (S^*)}æ\leq C_\kappa \, \tau^{(1-2\nu)/4} \,
\left( \|æf^\upharpoonright \, ' - f^\upharpoonright \|_{\mathcal C^{2, \alpha} (S^1)} +
\|æf^\downharpoonright \, '- f^\downharpoonright \|_{\mathcal C^{2, \alpha} (S^1)}
\right) ,
\]
for some constant $C>0$, which does not depend on $\kappa$ or $\tau$, where $w
$ (resp. $w'$) is the solution associated to $f^\upharpoonright, f^\downharpoonright
$ (resp. $f^\upharpoonright \, ', f^\downharpoonright \, '$).

The solution of (\ref{rede}) is obtained through a fixed point theorem for contraction
mappings, and it is classical to check that the solution we obtain depends
continuously on the parameters of the construction. In particular, the constant mean
curvature surface we obtain depends continuously on $\tilde \tau$ and $\rho$.
\end{proof}

\section{Connecting the pieces together}

We keep the notations of the previous sections. We assume that $\kappa >0$ is
fixed large enough (the value will be decided shortly) and assume that $\tau >0$
is chosen small enough so that all the results proven so far apply.

For all $\tilde x \in \mathbb R^2$, we define the annuli
\[
A^{out}_\tau (\tilde x) : = \{ x \in \mathbb R^2 \, : \, \tau^{3/4} \leq |x - \tilde x | \leq 2 \, \tau^{3/4} \, \} ,
\]
and
\[
A^{ins}_\tau (\tilde x) : = \{ x \in \mathbb R^2 \, : \, \tfrac{1}{2} \, \tau^{3/4} \leq |x - \tilde x | \leq \tau^{3/4} \, \}.
\]
Recall that a function $f$ defined on $S^1$ is said to satisfy (H1) if
\[
\int_{S^1} f (\theta) \, d\theta = 0.
\]
and it is said to satisfy (H2) if
\[
\int_{S^1} f (\theta) \, \cos \theta \, d\theta = \int_{S^1} f (\theta) \, \sin \theta \, d\theta =
0.
\]
Also recall that a function $f$ defined on $S^1$ is invariant under the action of ${\rm
Dih}^{(2)}_{m+1}$ if
\[
f\left( \theta + \frac{2\pi}{m+1}\right) = f(\theta)
\]
for all $\theta \in S^1$ and $f$ is invariant under the action of the symmetry $
\mathcal S_2$ if
\[
f(-\theta) = f(\theta)
\]
for all $\theta \in S^1$.

We now describe the different pieces of constant mean
curvature surfaces we have at hand.
\begin{itemize}

\item[(i)] Assume that we are given $f^\upharpoonright \in \mathcal
C^{2, \alpha} (S^1)$ which is invariant under the action of
${\rm Dih}^{(2)}_{m+1}$, satisfies (H1) and
\[
\|æf^\upharpoonright \|_{\mathcal C^{2, \alpha} (S^1)} \leq \kappa \, \tau^{3/2} .
\]
The result of Proposition~\ref{pr:4.4} provides a constant mean curvature (equal to
$1$) surface $\mathfrak D_{\tau ,f}^+$ which is invariant under the action of the
dihedral group ${\rm Dih}_{m+1}^{(2)}$, has one end asymptotic to the end of $
\mathfrak D_\tau^+$ and which, close to its boundary, can be parameterized as the
vertical graph of $x \longmapsto U^\upharpoonright ( \tau^{-3/4} \, x)$ over
$A^{ins}_\tau (0)$, where
\[
U^\upharpoonright (x) = c^\upharpoonright + \tau \, \log |x| - W^{\rm ins}_f (x) +
\bar U^\upharpoonright (x) , \label{eq:step1-1}
\]
where
\[
c^\upharpoonright : = \tau \, \log \left( \frac{2}{\tau^{1/4}} \right) \in \mathbb R,
\]
and where $\bar U^\upharpoonright$ satisfies (\ref{eq:est-1})
and (\ref{eq:est-2}). To simplify the notations we have not mentioned
the data $\tau, f$ in the notation for $U^\upharpoonright$ and
$\bar U^\upharpoonright$\\

\item[(ii)] Next, we assume that we are given $\tau_1>0$ satisfying
\[
|æ\tau_1 - \tau| \leq \kappa \, \tau^{3/2},
\]
and $\rho_1 >0$ satisfying
\[
\left| \rho_1 - \frac{m}{m+1} \, \frac{\tau}{2\rho_1} \right| \leq \kappa \, \tau^{3/4} .
\]
Further assume that we are given a function $f_1^\upharpoonright \in \mathcal
C^{2, \alpha} (S^1)$ invariant under the action of the dihedral group ${\rm Dih}_{m
+1}^{(2)}$ and a function $f^\downharpoonright_1 \in \mathcal C^{2, \alpha} (S^1)$,
invariant under the action of the symmetry $\mathcal S_2$, both satisfying (H1) and
\[
\|æf_1^\upharpoonright \|_{\mathcal C^{2, \alpha} (S^1)} \leq \kappa \, \tau^{3/2}
\qquad \mbox{and} \qquad \|æf_1^\downharpoonright \|_{\mathcal C^{2, \alpha}
(S^1)} \leq \kappa \, \tau^{3/2} .
\]
The result of Proposition~\ref{pr:6.3} provides a constant mean curvature (equal to
$1$) surface $\mathfrak S_{\tau_1 , \rho_1 , f^\upharpoonright_1,
f^\downharpoonright_1}$ which is invariant under the action of the dihedral group $
{\rm Dih}_{m+1}^{(2)}$ and which, close to its upper boundary can be parameterized
as the vertical graph of $x \longmapsto V^\upharpoonright ( \tau^{-3/4} \, x )$ over
$A^{out}_\tau (0)$ where
\[
\begin{array}{rllll}
V^\upharpoonright (x) = 1+ d^\upharpoonright+ \tau_1 \, \log |x| + W^{\rm out}_{f^
\upharpoonright_1} (x) + \bar V^\upharpoonright ( x ) ,
\end{array}
\label{eq:step2-1}
\]
\[
d^\upharpoonright : = \tau_1\, \, \left( m \, \log \rho_1 + a^\upharpoonright_{{\tau_1
, \rho , f^\upharpoonright_1, f^\downharpoonright_1}} + \frac{3}{4} \,
\log \tau \right) \in \mathbb R,
\]
and where $\bar V^\upharpoonright $ satisfies (\ref{eq:6.321}) and (\ref{eq:6.32}).
Close to one of its lower boundaries, this surface can be parameterized as a vertical
graph for some function $x \longmapsto V^\downharpoonright ( \tau^{-3/4} \,
(x - \rho_1 \, z_0) )$ over $A^{out}_\tau (\rho_1 \, z_0)$ which can be expanded as
\[
\begin{array}{rlllll}
V^\downharpoonright
(x) & = & \displaystyle - 1 + c^\downharpoonright - \displaystyle \frac{\tau_1}{m+1}
\, \log |x| - \displaystyle \tau^{3/4} \, æ\left( \rho_1 - \frac{m}{m+1} \, \frac{\tau_1}{ 2
\rho}æ \right) \, z_0 \cdot x \\[3mm]
& + & W^{\rm out}_{f^\downharpoonright_1} (x) + \bar V^\downharpoonright (x ) ,
\end{array}\label{eq:step2-2}
\]
\[
c^\downharpoonright : = 1 - \sqrt{1-\rho_1^2} - \frac{\tau_1}{m+1} \,
\left( m \, \log \rho_1 + a^\downharpoonright_{{\tau_1 , \rho_1 , f^\upharpoonright_1,
f^\downharpoonright_1}} - \displaystyle \frac{3}{4} \, \log \tau \right) \in \mathbb R,
\]
and where $\bar V^\downharpoonright $ satisfies estimates of the form
(\ref{eq:6.321}) and (\ref{eq:6.32}). Again, to simplify the notations we have not
mentioned the parameters $\tau_1 , \rho_1 , f^\upharpoonright_1,
f^\downharpoonright_1 $ in the notation for $V^\upharpoonright, \bar
V^\upharpoonright, V^\downharpoonright$ and $ \bar V^\downharpoonright$.\\

\item[(iii)] Assume that we are given $\tau_2 >0$ satisfying
\[
\left|æ\tau_2 - \frac{\tau}{m+1} \right| \leq \kappa \, \tau^{3/2},
\]
and a function $f^\downharpoonright_2 \in \mathcal C^{2, \alpha} (S^1)$ which
satisfies both (H1), (H2) and
\[
\|æf_2^\downharpoonright \|_{\mathcal C^{2, \alpha} (S^1)} \leq \kappa \, \tau^{3/2} .
\]
The result of Proposition~\ref{pr:5.1} provides a constant mean
curvature (equal to $1$) surface $\mathfrak C_{\tau_2 ,
f^\downharpoonright_2}$ which is invariant under the action of $\mathcal S_3$,
the symmetry with respect to the horizontal plane $x_3=0$ and is also
invariant under the action of $\mathcal S_2$, the symmetry with respect to
the plane $x_2=0$. Moreover, close to its lower boundary, this surface
can be parameterized as the vertical graph of $x \longmapsto
U^\downharpoonright ( \tau^{-3/4} \, x )$ over $A^{ins}_\tau (0)$, where
\[
U^\downharpoonright (x) = d^\downharpoonright -
\tau_2 \, \log |x| + W^{\rm ins}_{f^\downharpoonright_2}
(x) + \bar U^\downharpoonright ( x ) ,
\label{eq:step1-1}
\]
where
\[
d^\downharpoonright : = - \tau_2 \, \log \left( \frac{2 \, \tau^{3/4} }{\tau_2} \right) \in
\mathbb R,
\]
and where $\bar U^\downharpoonright$ satisfies (\ref{eq:est-1bis})
and (\ref{eq:est-2bis}). To simplify the notations we have not
mentioned the data $\tau_2, f^\downharpoonright_2$ in the
notation for $\bar U^\downharpoonright$
and $\bar U^\downharpoonright$.
\end{itemize}

Let us emphasize that the functions $f^\downharpoonright_1,
f^\downharpoonright_2$ and $f^\upharpoonright , f^\upharpoonright_1$ are all
assumed to satisfy (H1). Hence they have no constant term in their Fourier series.
The function $f^\downharpoonright_2$ is also assumed to satisfy (H2). Now, the
functions $f^\upharpoonright$ and $f^\upharpoonright_1$ are assumed to be
invariant under the action of the dihedral group ${\rm Dih}^{(2)}_{m+1}$ and, as
already mentioned, this implies that both functions also satisfies (H2) since its
Fourier series not not contain any term of the form $z \cdot x$. Therefore,
$f^\downharpoonright_1$ is the only function which does not satisfy (H2). Since
$f^\downharpoonright_1$ is assumed to be invariant under the action of $\mathcal
S_2 $, we can decompose it as
\[
f^\downharpoonright_1 = \lambda_1 \, z_0 \cdot x +
f^{\downharpoonright , \perp}_1 ,
\]
where $\lambda_1 \in \mathbb R$ and where $f^{\downharpoonright , \perp}_1$
satisfies both (H1) and (H2).

We denote by $\mathfrak C_{\tau_2 , f^\downharpoonright_2, \rho_1}^{(0)}$
the surface $\mathfrak C_{\tau_2 , f^\downharpoonright_2}$ which has been
translated by $\rho_1 \, z_0$. For $j=1, \ldots, m$,
\[
\mathfrak C_{\tau_2 , f^\downharpoonright_2, \rho_1}^{(0)} : = \mathfrak C_{\tau_2 ,
f^\downharpoonright_2} + \rho_1 \, z_0 .
\]
Moreover, the image of $\mathfrak C_{\tau_2 , f^\downharpoonright_2, \rho_1
}^{(0)}$ under the rotation $(\mathcal R_{m+1})^j$ will be denoted by
$\mathfrak C_{\tau_2 , f^\downharpoonright_2, \rho_1}^{(j)}$
\[
\mathfrak C_{\tau_2 , f^\downharpoonright_2, \rho_1}^{(j)} : = (\mathcal R_{m+1})^j
\left( \mathfrak C_{\tau_2 , f^\downharpoonright_2, \rho_1}^{(0)}\right) .
\]
In particular, the collection of surfaces $\mathfrak C_{\tau_2 ,
f^\downharpoonright_2,
\rho_1}^{(0)}, \ldots, \mathfrak C_{\tau_2 , f^\downharpoonright_2, \rho_1}^{(m)}$
constitute $m+1$ constant mean curvature surfaces which are symmetric with
respect to the dihedral group ${\rm Dih}^{(3)}_{m+1}$.

Given $t_1 \in \mathbb R$ small enough, we denote by $\mathfrak S_{\tau_1 ,
\rho_1 , f^\upharpoonright_1, f^\downharpoonright_1, t_1}$ the surface
$\mathfrak S_{\tau_1 , \rho_1 , f^\upharpoonright_1,
f^\downharpoonright_1}$ which has been translated in the vertical direction by
$(1-c^\downharpoonright + d^\downharpoonright + t_1) \, e_3$
\[
\mathfrak S_{\tau_1 , \rho_1 , f^\upharpoonright_1, f^\downharpoonright_1, t_1}
: = \mathfrak S_{\tau_1 , \rho_1 , f^\upharpoonright_1, f^\downharpoonright_1}
+ (1-c^\downharpoonright + d^\downharpoonright + t_1) \, e_3 .
\]
This is a constant mean curvature surface which is symmetric with respect to
the dihedral group ${\rm Dih}^{(2)}_{m+1}$. Observe that the
lower boundaries of $\mathfrak C_{\tau_2 , f^\downharpoonright_2, \rho_1}^{(0)},
\ldots, \mathfrak C_{\tau_2 , f^\downharpoonright_2, \rho_1}^{(m)}$ are close to the
lower boundaries of $\mathfrak S_{\tau_1 , \rho_1 , f^\upharpoonright_1,
f^\downharpoonright_1, t_1}$.

Finally, given $t \in \mathbb R$ small enough, we denote by $\mathfrak
D_{\tau ,f, t}^+$ the surface $\mathfrak D_{\tau ,f, t}^+$ which has been
translated in the vertical direction by $(2 - c^\downharpoonright +
d^\downharpoonright - c^\upharpoonright + d^\upharpoonright + t_1 +t)
\, e_3$
\[
\mathfrak D_{\tau ,f, t}^+: = \mathfrak D_{\tau ,f}^++ (2 -
c^\downharpoonright + d^\downharpoonright - c^\upharpoonright +
d^\upharpoonright + t_1 +t) \, e_3 .
\]
This is a constant mean curvature surface which is symmetric with
respect to the dihedral group ${\rm Dih}^{(2)}_{m+1}$. Observe that the boundary
of $\mathfrak D_{\tau ,f, t}^+$ is close to the upper boundary of $\mathfrak
S_{\tau_1 , \rho_1 , f^\upharpoonright_1, f^\downharpoonright_1, t_1}$.

To complete the proof of the main theorem, it remains to adjust the free
parameters of our construction, namely $t, t_1 , \tau_1, \tau_2, \rho_1
\in \mathbb R$, and the functions $f^\downharpoonright_1,
f^\downharpoonright_2, f^\upharpoonright$ and $f^\upharpoonright_1$
defined on $S^1$, so that
\[
\mathfrak C_{\tau_2 , f^\downharpoonright_2, \rho_1}^{(0)} \sqcup
\ldots \sqcup \mathfrak C_{\tau_2 , f^\downharpoonright_2, \rho_1}^{(m)}
\sqcup \mathfrak S_{\tau_1 , \rho_1 , f^\upharpoonright_1,
f^\downharpoonright_1, t_1} \sqcup \mathfrak D_{\tau ,f, t}^+,
\]
constitute a $\mathcal C^1$ surface which can be extended by reflection
through the horizontal plane as a $\mathcal C^1$ surface which is complete,
non compact and has two ends of Delaunay type (asymptotic to a nodo\"{\i}d end).
Observe that the surface is invariant under the action of the dihedral group
${\rm Dih}^{(3})_{m+1}$ and that there is still one free parameter, namely $\tau$
which determines the Delaunay type end and hence the vertical flux of
the surface.

This surface is in fact piecewise smooth and has constant mean curvature equal
to $1$ away from the boundaries where the connected sum is performed.
Since all pieces have constant mean curvature identically equal to $1$, elliptic
regularity theory then implies that this surface is in fact a smooth surface. Indeed,
near one of the boundaries where the connected sum is performed, the
surface is a graph of a function, say $u^{ins}$ defined over $A^{ins}_\tau$
and another function, say $u^{out}$ defined over $A^{out}_\tau$. The functions
$u^{ins}$ and $u^{out}$ are $\mathcal C^{2, \alpha}$ and solve the mean
curvature equation
\begin{equation}
\frac{1}{2} \, {\rm div} \, \left( \frac{\nabla u}{\sqrt{1+|\nabla u|^2}} \right) = 1
\label{mce}
\end{equation}
on their respective domain of definition (for the sake of simplicity, we have
assumed that the mean curvature vector is upward pointing near the
boundary we are interested in). Moreover, $u^{ins} = u^{out}$ and
$\partial_r u^{ins} = \partial_r u^{out}$on $\partial A^{ins}_\tau \cap \partial
A^{out}_\tau$. This implies that the function $u$ defined on $A^{ins}_\tau \cup
A^{out}_\tau$ by $u : = u^{ins}$ on $A^{ins}_\tau$ and $u := u^{out}$ on
$A^{out}_\tau$ belongs to $\mathcal C^{1,1}$ and is a weak solution of (\ref{mce})
on $A^{ins}_\tau \cup A^{out}_\tau$. Elliptic regularity implies that $u$
is $\mathcal C^{2, \alpha}$ and hence the surface we have obtained is
a smooth constant mean curvature surface.

Therefore, to complete the proof, it remains to explain how to find $t, t_1 ,
\tau_1, \tau_2, \rho_1 \in \mathbb R$, and the functions $f^\downharpoonright_1,
f^\downharpoonright_2, f^\upharpoonright$ and $f^\upharpoonright_1$
defined on $S^1$, so that the following system of equations
is fulfilled
\begin{equation}
U^\upharpoonright - c^\upharpoonright + t = V^\upharpoonright
- 1 - d^\upharpoonright \qquad \mbox{and} \qquad
\partial_r \left( V^\upharpoonright - U^\upharpoonright \right) =0
\label{eq:7.1}
\end{equation}
on $S^1$ and
\begin{equation}
V^\downharpoonright + 1-c^\downharpoonright + t_1 = U^\downharpoonright
- d^\downharpoonright \qquad \mbox{and} \qquad
\partial_r \left( V^\downharpoonright - U^\downharpoonright \right) =0
\label{eq:7.2}
\end{equation}
on $S^1$. Recall that, even though this is not apparent in the notations, all functions
and constants depend on the parameters and boundary data. The rest of this
section is devoted to the proof that the above system has indeed a solution, provided
$\tau$ is small enough. We will prove the~:
\begin{proposition}
There exists $\kappa >0$ such that, for all $\tau >0$ small enough there exists
parameters $t, t_1, \tau_1, \tau_2, \rho_1$, and functions $f^\downharpoonright_1,
f^\downharpoonright_2, f^\upharpoonright , f^\upharpoonright_1$ defined on $S^1$
and satisfying the above symmetries and estimates, such that the system
(\ref{eq:7.1}) and (\ref{eq:7.2}) is satified.
\end{proposition}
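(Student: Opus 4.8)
The plan is to rephrase the system (\ref{eq:7.1})--(\ref{eq:7.2}) as a two--stage fixed point problem, after extracting its principal (linear, $\tau$--independent) part. Collect the unknowns into the scalars $X=(t,t_1,\tau_1,\tau_2,\rho_1,\lambda_1)$ together with four functions $f^\upharpoonright,f^\upharpoonright_1,f^{\downharpoonright,\perp}_1,f^\downharpoonright_2$ on $S^1$, where $\lambda_1\in\mathbb R$ is the coefficient in the decomposition $f^\downharpoonright_1=\lambda_1\,z_0\cdot x+f^{\downharpoonright,\perp}_1$ (recall that $f^\downharpoonright_1$ is only required to satisfy (H1), while $f^{\downharpoonright,\perp}_1$ satisfies (H1) and (H2)); the functions $f^\upharpoonright,f^\upharpoonright_1$ are ${\rm Dih}^{(2)}_{m+1}$--invariant and satisfy (H1), hence carry Fourier modes only in $(m+1)\mathbb Z-\{0\}$, while $f^{\downharpoonright,\perp}_1,f^\downharpoonright_2$ are $\mathcal S_2$--invariant and satisfy (H1) and (H2), hence carry Fourier modes only of index $\geq2$ in absolute value. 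A vertical translation of a summand merely adds a constant to its graph function, which is absorbed into the explicit constants $c^\bullet,d^\bullet$; consequently the remainders $\bar U^\upharpoonright,\bar V^\upharpoonright,\bar U^\downharpoonright,\bar V^\downharpoonright$ of Propositions~\ref{pr:4.4}, \ref{pr:5.1} and \ref{pr:6.3} do not depend on $t,t_1$, and by those propositions they depend on the remaining data in a Lipschitz way in the function variables and continuously in the scalars $\tau_1,\tau_2,\rho_1$.

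First I would expand (\ref{eq:7.1})--(\ref{eq:7.2}) in Fourier series on $S^1$, using that $W^{\rm ins}_f=W^{\rm out}_f=f$ on $S^1$ while $\partial_r W^{\rm ins}_f|_{S^1}=\Lambda f$ and $\partial_r W^{\rm out}_f|_{S^1}=-\Lambda f$, where $\Lambda$ is the Fourier multiplier $e^{in\theta}\mapsto|n|\,e^{in\theta}$, which restricted to functions of zero mean is an isomorphism $\mathcal C^{1,\alpha}(S^1)\to\mathcal C^{2,\alpha}(S^1)$ (and likewise on any closed subspace spanned by nonzero modes). This decouples the system mode by mode into blocks, each solvable with bounded inverse. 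The mode $0$ of (\ref{eq:7.1}) gives $t$ and $\tau_1-\tau$ as the means of $\bar V^\upharpoonright-\bar U^\upharpoonright$ and of $\partial_r(\bar U^\upharpoonright-\bar V^\upharpoonright)$; the modes in $(m+1)\mathbb Z-\{0\}$ of (\ref{eq:7.1}) form invertible $2\times2$ systems whose solution, after applying $\Lambda^{-1}$, expresses $f^\upharpoonright$ and $f^\upharpoonright_1$ in $\mathcal C^{2,\alpha}(S^1)$ as explicit bounded combinations of $(\bar U^\upharpoonright-\bar V^\upharpoonright)|_{S^1}\in\mathcal C^{2,\alpha}$ and $\Lambda^{-1}\partial_r(\bar U^\upharpoonright-\bar V^\upharpoonright)|_{S^1}\in\mathcal C^{2,\alpha}$; likewise the mode $0$ of (\ref{eq:7.2}) gives $t_1$ and $\tau_2-\tfrac{\tau_1}{m+1}$, and the modes of index $\geq2$ of (\ref{eq:7.2}) give $f^{\downharpoonright,\perp}_1$ and $f^\downharpoonright_2$ in $\mathcal C^{2,\alpha}(S^1)$ by the same device.

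The delicate block is the mode $\pm1$ of (\ref{eq:7.2}); this is where the geometry of \S6 is used and it is the step I expect to be the main obstacle. Since $f^\downharpoonright_2$ is forced to satisfy (H2), it cannot contribute there, so the index $1$ part of the lower matching cannot be absorbed by boundary data. Adding the two equations of index $1$ the contributions of $\lambda_1$ cancel and one obtains
\[
\tau^{3/4}\Big(\rho_1-\frac{m}{m+1}\,\frac{\tau_1}{2\rho_1}\Big)=\frac12\Big[(\bar V^\downharpoonright)_1-(\bar U^\downharpoonright)_1+(\partial_r\bar V^\downharpoonright)_1-(\partial_r\bar U^\downharpoonright)_1\Big],
\]
while subtracting them yields $\lambda_1=O(\tau^{3/2})$. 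By the estimates recalled below the right hand side above is $O(\tau^{3/2})$, and $\rho\mapsto\rho-\frac{m}{m+1}\frac{\tau_1}{2\rho}$ is, on a neighbourhood of the value $\rho_0$ with $2(m+1)\rho_0^2=m\tau$, a diffeomorphism onto a neighbourhood of $0$ of diameter at least $c\,\tau^{3/4}$, with inverse Lipschitz uniformly in $\tau$ (its derivative there equals $1+\frac{m}{m+1}\frac{\tau_1}{2\rho^2}\in(1,3)$). Hence the displayed equation determines $\rho_1$, with $|\rho_1-\frac{m}{m+1}\frac{\tau_1}{2\rho_1}|=O(\tau^{3/4})$, consistently with (\ref{eq:6.30}). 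This is precisely the mechanism advertised in \S6: the coefficient of $z_j\cdot y$ in the expansion of $v^\downharpoonright$ can be tuned to the required small value by choosing $\rho$, the only subtlety being the quantitative control of this choice.

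It remains to produce a genuine solution. Fix the scalars in the box $\mathcal B$ defined by $|t|,|t_1|,|\tau_1-\tau|,|\tau_2-\tfrac{\tau}{m+1}|,|\lambda_1|\leq\kappa\tau^{3/2}$ and $|\rho_1-\rho_0|\leq\kappa\tau^{3/4}$. The four functional equations of the second paragraph then form a fixed point problem for $(f^\upharpoonright,f^\upharpoonright_1,f^{\downharpoonright,\perp}_1,f^\downharpoonright_2)$ in the ball of radius $\kappa\tau^{3/2}$ of the product of the spaces $\mathcal C^{2,\alpha}(S^1)$: by (\ref{eq:est-1}), (\ref{eq:est-1bis}), (\ref{eq:6.321}) the map sends $0$ to a point of size $O(\tau^{3/2})$ uniformly over $\mathcal B$, and by (\ref{eq:est-2}), (\ref{eq:est-2bis}), (\ref{eq:6.32}) it is Lipschitz in the function variables with constant $O(\tau^\varepsilon)$ for some $\varepsilon>0$ (the operators $\Lambda^{-1}$ and the solution operators of the $2\times2$ systems being fixed bounded operators), so for $\kappa$ fixed large enough and $\tau$ small it is a contraction of that ball; its unique fixed point $f^\bullet=f^\bullet(t,t_1,\tau_1,\tau_2,\rho_1,\lambda_1)$ depends continuously on the scalars. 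Substituting $f^\bullet$ into the six remaining scalar equations (the modes $0$ of (\ref{eq:7.1}) and (\ref{eq:7.2}), and the modes $\pm1$ of (\ref{eq:7.2})) and solving for $\rho_1$ by inverting the diffeomorphism above, one obtains a continuous self--map of the convex box $\mathcal B$ — it maps into $\mathcal B$ by the size estimates just used — to which Brouwer's fixed point theorem applies; the resulting fixed scalars together with the corresponding $f^\bullet$ solve (\ref{eq:7.1})--(\ref{eq:7.2}). Finally, the normalisations (H1), (H2), ${\rm Dih}^{(2)}_{m+1}$-- and $\mathcal S_2$--invariance are automatically preserved throughout, because each of the three summand constructions preserves them and because $\Lambda^{-1}$ and the Fourier projections used in the reduction commute with these symmetries.
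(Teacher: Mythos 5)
Your proposal is correct and follows essentially the same route as the paper: project the matching system onto the constant mode, the $z_0\cdot x$ mode and their orthogonal complement, solve the infinite-dimensional part by a contraction mapping using the Dirichlet-to-Neumann type isomorphism (your multiplier $\Lambda$, the paper's $f\mapsto \partial_r(W^{\rm ins}_f-W^{\rm out}_f)|_{r=1}$) with the scalars as parameters, and then close the finite-dimensional system — including the choice of $\rho_1$ through the map $\rho\mapsto\rho-\frac{m}{m+1}\frac{\tau_1}{2\rho}$ — by Brouwer's fixed point theorem on a small box. The only blemishes are cosmetic: the stated mapping direction of $\Lambda$ should be $\mathcal C^{2,\alpha}(S^1)\to\mathcal C^{1,\alpha}(S^1)$ (you use $\Lambda^{-1}$ correctly afterwards), and your mode-$1$ bookkeeping differs from the paper's displayed coefficient of $\lambda_1$ by an inessential factor, the $2\times 2$ system being invertible either way.
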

\begin{proof} First we make use of the results of Propositions~\ref{pr:4.4},
Propositions~\ref{pr:5.1} and Propositions~\ref{pr:6.3} to get the expansion of the
functions $U^\upharpoonright , V^\upharpoonright , U^\downharpoonright $ and
$V^\downharpoonright $. Recalling that we have to restrict all those functions to
$S^1$, it is easy to check, using (\ref{eq:5.199}) and (\ref{eq:6.31}) that the first two
equations of the system we have to solve read
\begin{equation}
\left\{
\begin{array}{rllll}
t + f^\upharpoonright_1 - f^\upharpoonright & = & \bar
U^\upharpoonright - \bar V^\upharpoonright \\[3mm]
(\tau_1 - \tau) + \partial_r \left( W^{\rm out}_{f^\upharpoonright_1} - W^{\rm ins}_{
f^\upharpoonright} \right) & = & \partial_r \left( \bar V^\upharpoonright - \bar
U^\upharpoonright \right) ,
\end{array}
\right.
\label{eq:7.3}
\end{equation}
while, using (\ref{eq:5.200}) and (\ref{eq:6.33}), we see the next two equations are
given by
\begin{equation}
\left\{
\begin{array}{rllll}
t_1æ- \displaystyle \tau^{3/4}\, æ\left( \rho_1 - \frac{m}{m+1} \, \frac{\tau_1}{ 2
\rho_1}æ \right) \, z_0 \cdot x & +& f^\downharpoonright_1 - f^\downharpoonright_2
\\[3mm]
& = & \bar U^\downharpoonright - \bar V^\downharpoonright \\[3mm]
- \displaystyle \left( \frac{\tau_1}{m+1} - \tau_2 \right) - \displaystyle \tau^{3/4}\,
æ\left( \rho_1 - \frac{m}{m+1} \, \frac{\tau_1}{ 2 \rho_1}æ \right) \, z_0 \cdot x & + &
\partial_r \left(W^{\rm out}_{f^\downharpoonright_1} - W^{\rm ins}_{
f^\downharpoonright_2} \right) \\[3mm]
& = & \partial_r \left( \bar U^\downharpoonright - \bar V^\downharpoonright \right),
\end{array}
\right.
\label{eq:7.4}
\end{equation}
In writing this system one has to be a bit careful about the invariance of the functions
we are interested in. Indeed, in (\ref{eq:7.3}), all functions are invariant under the
action of ${\rm Dih}^{(2)}_{m+1}$., while in (\ref{eq:7.4}), all functions are invariant
under the action of the symmetry $\mathcal S_2$.

Let us denote by $\Pi^0$
the $L^2(S^1)$-orthogonal projection over the space of constant functions, $\Pi^1$
the $L^2(S^1)$-orthogonal projection over the space spanned by the function
$x \longmapsto z_0 \cdot x$ and let us denote by $\Pi^\perp$ denotes $L^2(S^1)
$-orthogonal projection over the orthogonal complement of the space spanned by
the constant function and the function $x \longmapsto z_0 \cdot x$.

We project this system over the $L^2(S^1)$-orthogonal complement of the constant
function and the function $x \longmapsto z_0 \cdot x$. We obtain the coupled system
\begin{equation}
\left\{
\begin{array}{rllll}
f^\upharpoonright_1 - f^\upharpoonright & = & \Pi^\perp \left( \bar
U^\upharpoonright - \bar V^\upharpoonright \right) \\[3mm]
\partial_r \left( W^{\rm out}_{f^\upharpoonright_1} - W^{\rm ins}_{f^\upharpoonright}
\right) & = & \Pi^\perp \partial_r \left( \bar V^\upharpoonright - \bar
U^\upharpoonright \right) \\[3mm]
f^{\downharpoonright, \perp}_1 - f^\downharpoonright_2 & = & \Pi^\perp \left( \bar
U^\downharpoonright - \bar V^\downharpoonright \right) \\[3mm]
\partial_r \left(W^{\rm out}_{f^{\downharpoonright, \perp}_1} - W^{\rm ins}_{
f^\downharpoonright_2} \right) & = & \Pi^\perp \partial_r \left( \bar
U^\downharpoonright - \bar V^\downharpoonright \right),
\end{array}
\right.
\label{eq:projorth}
\end{equation}
where we recall that we have decomposed $f^\downharpoonright_1
= \lambda_1 \, z_0 \cdot x + f^{\downharpoonright , \perp}_1$.

The projection of the system (\ref{eq:7.3})-(\ref{eq:7.4}) over the space of
constant functions leads to the coupled system
\begin{equation}
\left\{
\begin{array}{rllll}
t & = & \Pi^0 \left( \bar U^\upharpoonright - \bar V^\upharpoonright \right) \\[3mm]
\tau_1 - \tau & = & \Pi^0 \partial_r \left( \bar V^\upharpoonright - \bar
U^\upharpoonright \right) \\[3mm]
t_1 & = & \Pi^0 \left( \bar U^\downharpoonright - \bar V^\downharpoonright \right)
\\[3mm]
\displaystyle \tau_2 - \frac{\tau_1}{m+1} & = & \Pi^0 \partial_r \left( \bar
U^\downharpoonright - \bar V^\downharpoonright \right).
\end{array}
\right.
\label{eq:projorth0}
\end{equation}

Finally, the projection of the system (\ref{eq:7.3})-(\ref{eq:7.4}) over the space of
functions spanned by $x \longmapsto z_0\cdot x$ leads to the coupled system
\begin{equation}
\left\{
\begin{array}{rllll}
\left( \lambda_1 - \displaystyle \tau^{3/4}\,
æ\left( \rho_1 - \frac{m}{m+1} \, \frac{\tau_1}{ 2 \rho_1}æ \right) \right) \, z_0 \cdot x
& = & \Pi^1 \left( \bar U^\downharpoonright - \bar V\downharpoonright \right)
\\[3mm]
\left( -2 \, \lambda_1 - \displaystyle \tau^{3/4}\,
æ\left( \rho_1 - \frac{m}{m+1} \, \frac{\tau_1}{ 2 \rho_1}æ \right)\right) \, z_0 \cdot x & = &
\Pi^1 \partial_r \left( \bar U^\downharpoonright - \bar V^\downharpoonright \right).
\end{array}
\right.
\label{eq:projorth1}
\end{equation}
To obtain the second equation, we have used the fact that
\[
W^{\rm out}_{f^{\downharpoonright}_1} = \lambda_1 \, \frac{z_0 \cdot x}{
|x|^2} + W^{\rm out}_{f^{\downharpoonright, \perp}_1} .
\]

Observe that the right hand sides of (\ref{eq:projorth}), (\ref{eq:projorth0}) and
(\ref{eq:projorth1}) does not depend on $t$ and $t_1$. Hence, the first and third
equations in (\ref{eq:projorth0}) will give us the values of $t$ and $t_1$, once the rest
of the equations are solved.

For all $\tau$ small enough, we will solve (\ref{eq:projorth}) using some fixed point
theorem for contraction mappings to obtain a solution $(f^\upharpoonright,
f^\upharpoonright_1 , f^{\downharpoonright, \perp}_1, f^{\downharpoonright}_2)$
continuously depending on the parameters $\tau_1, \tau_2, \rho_1, \lambda_1$ (and
$\tau$). Then, we introduce the corresponding solution in (\ref{eq:projorth0}) and
(\ref{eq:projorth1}) to get a nonlinear system in $\tau_1, \tau_2$ and $\rho_1$, which
we will solve using Browder's fixed point theorem.

To begin with, we explain how (\ref{eq:projorth}) can be rewritten in diagonal form.
This makes use of the following result whose proof can be found, for example, in
\cite{Maz-Pac-1}~:
\begin{proposition}
The operator
\[
\mathcal C^{2, \alpha} (S^1)^\perp \ni f \longmapsto \partial_r \left(W^{\rm ins}_{f} -
W^{\rm out}_{f} \right)_{| r=1} \in \mathcal C^{1, \alpha} (S^1)^\perp
\]
is an isomorphism. Here $\mathcal C^{k, \alpha} (S^1)^\perp$ denote the image of
$\mathcal C^{k, \alpha} (S^1)$ under $\Pi^\perp$.
\end{proposition}
\begin{proof}
The Fourier decomposition of a function $f \in \mathcal C^{k, \alpha} (S^1)^\perp$ is
given by
\[
f (\theta) = \sum_{n\neq 0, \pm 1} f_n \, e^{i n \theta}
\]
in which case
\[
W^{\rm out}_{f} = \sum_{n\neq 0, \pm 1} f_n \, r^{-|n|} \, e^{i n\theta}, \quad \mbox{and}
\quad W^{\rm ins}_{f} = \sum_{n \neq 0, \pm 1} f_n \, r^{|n|} \, e^{i n\theta},
\]
Therefore,
\[
\partial_r \left(W^{\rm ins}_{f} - W^{\rm out}_{f} \right)_{| r=1} = 2 \, \sum_{n \neq 0,
\pm 1} f_n \, |n| \, e^{i n\theta},
\]
is equal to twice the Dirichlet to Neumann map for the Laplace operator in the unit
disc. This is a well defined, self-adjoint, first order elliptic operator which is
injective and elliptic regularity theory implies that it is an isomorphism.
\end{proof}

Using this result, the system (\ref{eq:projorth}) can be rewritten as
\[
\left (f^\upharpoonright, f^\upharpoonright_1 , f^{\downharpoonright, \perp}_1,
f^{\downharpoonright}_2 \right) =
\mathbb N^\perp_{\tau_1, \tau_2, \rho_1, \lambda_1 } (f^\upharpoonright,
f^\upharpoonright_1 f^{\downharpoonright, \perp}_1,
f^{\downharpoonright}_2) ,
\]
where the nonlinear operator $\mathbb N^\perp_{\tau_1, \tau_2, \rho_1, \lambda_1 }
$ satisfies
\begin{equation}
\| \mathbb N^\perp_{\tau_1, \tau_2, \rho_1, \lambda_1 } (f^\upharpoonright,
f^\upharpoonright_1 , f^{\downharpoonright, \perp}_1,
f^{\downharpoonright}_2)\|_{(\mathcal C^{2,
\alpha} (S^1))^{4}} \leq \, C \, \tau^{3/2}
\label{eq:sa}
\end{equation}
for some constant $C >0$ independent of $\kappa >0$, provided $\tau$ is chosen
small enough. This last estimate follows directly from (\ref{eq:est-1})
in Proposition~\ref{pr:4.4}, (\ref{eq:est-1bis}) in Proposition~\ref{pr:5.1} and
(\ref{eq:6.321}) in Proposition~\ref{pr:6.3}. Moreover, thanks to (\ref{eq:est-2}),
(\ref{eq:est-2bis}) and (\ref{eq:6.32}), provided $\kappa >0$ is fixed larger than the
constant $C$ which appears in (\ref{eq:sa}), we can use a fixed point theorem for
contraction mapping in the ball of radius $\kappa \, \tau^{3/2}$ in $\left( \Pi^\perp
\mathcal C^{2, \alpha} (S^1)\right)^4$ to get the existence of a solution of
(\ref{eq:sa}), for all $\tau >0$ small enough. This solution depends
continuously on $\tau_1, \tau_2, \rho_1$ and $\lambda_1$, since $\mathbb N^
\perp_{\tau_1, \tau_2, \rho_1, \lambda_1 }$ does (observe that $\mathbb N^
\perp_{\tau_1, \tau_2, \rho_1, \lambda_1 } $
depends implicitly on $\tau$). We now insert this solution in (\ref{eq:projorth0})
and (\ref{eq:projorth1}). With simple manipulations, we conclude that it remains to
solve the nonlinear system
\begin{equation}
\left( \tau_1 - \tau , \tau_2 - \frac{\tau}{m+1} , \tau^{3/4} \,æ\left( \rho_1 - \frac{m}{m+1}
\, \frac{\tau}{ 2 \rho_1}æ \right), æ \lambda_1 \right) = \mathbb N^0 ( \tau_1, \tau_2,
\rho_1 , \lambda_1) ,
\label{eq:la}
\end{equation}
where $\mathbb N^0$ satisfies
\[
\| \mathbb N^0 ( \tau_1, \tau_2, \rho_1 , \lambda_1)\|_{\mathbb R^4} \leq \, C \,
\tau^{3/2}
\]
for some constant $C >0$ independent of $\kappa >0$, provided $\tau$ is chosen
small enough. Moreover, $\mathbb N^0$ depends continuously on the parameters
$\tau_1, \tau_2, \rho_1$ and $ \lambda_1$ (observe that $\mathbb N^0$ depends
implicitly on $\tau$). The equation (\ref{eq:la}) can then be solved using a simple
degree argument (Browder's fixed point theorem). This completes the proof of the
result.
\end{proof}

\section{Appendix 1}

We discuss the elementary result in the theory of second order ordinary differential
equations which is used at the end of the proof of Proposition~\ref{pr:linres100}.
Assume that we are given a function $s \longmapsto p(s)$ which is periodic (say of
period $S >0$). Further assume that the homogeneous problem $ (\partial_s^2 +
p )\, w^+ =0$ has a nontrivial periodic solution of period $S$. Without loss of
generality, we can assume that $ w^+(0)=1$ and $\partial_s w^+(0) =0$ (just
choose the origin so that $0$ coincides with a point where $w^+$ achieves its
maximum). Let $w^-$ be the unique solution of $(\partial_s^2 + p)\, w^- =0$ such
that $w^-(0) = 0$ and $\partial_s w^-(0) = 1$. The Wronskian of $w^+$ and $w^-$
being constant, we conclude that
\[
\partial_s w^-(S) = \partial_s w^-(S) w^+(S) - \partial_s w^+ (S) w^-(S) =
\partial_s w^-(0) w^+(0) - \partial_s w^+ (0)w^-(0) = 1.
\]

We define
\[
v(s) : = w^- (S+s) - w^-(S) \, w^+(s) .
\]
It is clear that $v$ is a solution $(\partial_s^2 + p) \, v = 0$ and further
observe that $\partial_s v(0) =1$ and $v(0) = 0$. Therefore, $v=w^-$. This proves
that
\[
w^- (S+s) = w^-(s) + w^-(S) \, w^+(s),
\]
and hence $w^-$ is at most linearly growing in the sense that $
|w^-(s)| \leq C \, (1+ |s|)$ for some constant $ C>0$.


\begin{thebibliography}{9}

\bibitem{Col-Min} T. Colding and W. Minicozzi, {\em Minimal surfaces}. Courant
Lecture Notes in Mathematics, 4. New York University, Courant Institute of
Mathematical Sciences, New York, 1999.

\bibitem{Eel} J. Eells, {\em The surfaces of Delaunay.} The Mathematical
Intelligencer, 9, No 1(1987) 53-57.

\bibitem{Fak-Pac} S. Fakhi and F. Pacard. {\em Existence of complete minimal
hypersurfaces with finite total curvature}. Manuscripta Mathematica. 103, (2000),
465-512.

\bibitem{Hau-Pac} L. Hauswirth and F. Pacard, {\em Embedded minimal surfaces
with finite genus and two limits ends.} Inventiones Mathematicae 169 (3) (2007)
569-620.

\bibitem{Law} B. Lawson, {\em Lectures on minimal submanifolds. Vol. I}.
MonografÕas de Matem\'atica, 14. Instituto de Matem\'atica Pura e Aplicada, Rio de
Janeiro, 1977

\bibitem{McO} R.B. Lockhart and R.C McOwen, {\em Elliptic differential operators
on noncompact manifolds}. Ann. Scuola Norm. Sup. Pisa Cl. Sci. (4) 1, no. 3,
(1985), 409-447 .

\bibitem{Maz} R. Mazzeo, {\em Elliptic theory of edge operators I.} Comm. in PDE,
{\bf 10}, (1991), 1616-1664.

\bibitem{Maz-Pac-1} R. Mazzeo and F. Pacard. {\em Constant mean curvature
surfaces with Delaunay ends}. Comm. Analysis and Geometry. 9, 1, (2001),
169-237.

\bibitem{Maz-Pac-Pol} R. Mazzeo, F. Pacard and D. Pollack. {\em Connected sums
of constant mean curvature surfaces in Euclidean 3 space}. J. Reine Angew. Math.
536, (2001), 115-165.

\bibitem{Maz-Pac-Pol-2} R. Mazzeo, F. Pacard and D. Pollack, {The conformal
theory of Alexandrov embedded constant mean curvature surfaces in $R^3$}.
Global Theory of Minimal Surfaces, Clay Mathematics Proceedings, D. Hoffman
Edt, AMS (2005).

\bibitem{Mel} R. Melrose, {\em The Atiyah-Patodi-singer index theorem},
Research notes in Math, {\bf 4}, (1993).

\bibitem{Pac} F. Pacard, {\em Connected sum constructions in geometry and
nonlinear analysis}, Preprint.

\bibitem{Ros} W. Rossman, {\em The First Bifurcation Point for Delaunay Nodoids},
Experimental Mathematics, Vol. 14, No. 3 (2005), 331-342.
\end{thebibliography}
\end{document}